\newtheorem{theorem}{Theorem}[section]
\newtheorem{thm}{Theorem}
\newtheorem{lemma}[theorem]{Lemma}
\newtheorem{prop}[theorem]{Proposition}
\newtheorem{defn}[theorem]{Definition}
\theoremstyle{definition}
\newtheorem{rmk}[theorem]{Remark}
\newtheorem{example}[theorem]{Example}
\newtheorem*{Not}{Notations}
\theoremstyle{definition}
\tikzstyle cross=[preaction={draw=white, -, line width=6pt}]
\tikzstyle normal=[thick]
\newcommand{\BC}{\mathbb{C}}
\newcommand{\BN}{\mathbb{N}}
\newcommand{\BQ}{\mathbb{Q}}
\newcommand{\BZ}{\mathbb{Z}}
\newcommand{\CB}{\mathcal{B}}
\newcommand{\CH}{\mathcal{H}}
\def\CL{\mathcal L}
\def\CN{\mathcal N}
\newcommand{\CR}{\mathcal{R}}
\def\CS{\mathcal S}
\def\CU{\mathcal U}
\newcommand{\mt}{\operatorname{\mathtt{t}}}
\newcommand{\Bn}{\mathcal{B}_n}
\newcommand{\PBn}{\mathcal{PB}_n}
\newcommand{\Sk}{\mathfrak{S}}
\newcommand{\perm}{\operatorname{perm}}
\newcommand{\Homeo}{\operatorname{Homeo}}
\newcommand{\Mod}{\operatorname{Mod}}
\newcommand{\bapp}{\left. \begin{array}{rcl}}
\newcommand{\eapp}{\end{array} \right.}
\newcommand{\bfct}{\left\lbrace \begin{array}{rcl}}
\newcommand{\efct}{\end{array} \right.}
\newcommand{\Conf}{\operatorname{Conf}}
\newcommand{\Hlf}{\operatorname{H} ^{\mathrm{lf}}}
\newcommand{\Hnot}{\operatorname{H}}
\newcommand{\Hrelm}{\operatorname{\CH}^{\mathrm{rel }-}}
\newcommand{\Hrelp}{\operatorname{\CH}^{\mathrm{rel }+}}
\newcommand{\Crelm}{\operatorname{C}^{\mathrm{rel }-}}
\newcommand{\Rhom}{\operatorname{R}^{\mathrm{hom}}}
\newcommand{\Laurent}{\CR}
\newcommand{\aug}{\operatorname{aug}}
\def\ab{\mathrm{ab}}
\newcommand{\slt}{{\mathfrak{sl}(2)}}
\newcommand{\Uq}{{U_q\slt}}
\newcommand{\UqhL}{{U^{\frac{L}{2}}_q\slt}}
\newcommand{\qbin}[2]{\left[\begin{array}{c}
      #1 \\
      #2 \end{array}\right]}
\newcommand{\RR}{\operatorname{R}}
\def\bfVl{{\bf V}^l}
\newcommand{\Jones}{\operatorname{J}}
\newcommand{\End}{\operatorname{End}}
\def\Tr{\operatorname{Tr}}
\def\Id{\operatorname{Id}}
\def\Coker{\operatorname{Coker}}
\title{Colored Jones polynomials and abelianized Lefschetz numbers}
\author{Jules Martel}
\date{}
\begin{document}

\maketitle

\begin{abstract}
We show that colored Jones polynomials of the closure of a braid compute weighted sums of abelianized Lefschetz numbers associated with the action of the braid on configuration spaces. The sum is over number of configuration points. Then we interpret this sum in terms of Poincaré--Lefschetz duality intersection pairing between homology classes. 
\end{abstract}

\tableofcontents

\section{Introduction}

\subsection{Colored Jones polynomials and Lefschetz numbers}

The Jones polynomial \cite{Jo} is an invariant of knots that can distinguish a knot from its mirror image, contrary to e.g. the Alexander polynomial. One way of defining this invariant for a braid closure is by taking a (quantum) trace of the braid representation over tensor products of the standard two dimensional module on the quantized algebra associated with $\slt$, denoted $\Uq$. It turns out that this construction generalizes to arbitrary finite dimensional representation of $\Uq$. It gives rise to a graded family of polynomials knot invariants called {\em colored Jones polynomials}. See \cite{Kho} for the definition of these invariants in this framework of $\Uq$ simple finite dimensional representations. One may also find in \cite{Kho} the relation between any colored Jones polynomial and the first of the family, the so called Jones polynomial, that satisfies some local {\em skein relations}. While the Alexander polynomial has several topological interpretations, there is a lack of topological definition of the Jones polynomial in that sense. An example of expectation is the {\em Volume conjecture} (\cite{Ka}), stated in \cite{MuMu} for colored Jones polynomials, suggesting that the family of colored Jones polynomials for a knot contains the simplicial volume of its complement. As colored Jones polynomials are invariants defined in a purely algebraic fashion (using extensively inherent tools of the $\Uq$-modules category), understanding their topological content is a natural question. In the present paper we give some topological interpretation of the Jones polynomials, using some homological model of quantum representations built from {\em configuration spaces of points inside the punctured disks}.

In \cite{Jules1}, the author extends T. Kohno's theorem (\cite{Koh}) relating representations of braid groups over tensor product of some $\Uq$-modules called {\em Verma modules}, and those constructed by R. Lawrence (\cite{Law}) on homologies of configuration spaces of points in the punctured disks. Kohno's theorem has allowed topological interpretation for (colored) Jones polynomials involving homology classes of the configuration space of points in the punctured disk, see \cite{Big3,Ito2,An}. By use of \cite{Jules1} which recovers more quantum structure from Lawrence homological representations, we prove the following result. 

\begin{thm}[Theorem~\ref{JonesisLefschetz}]\label{maintheoremIntro}
Let $\beta$ be a braid on $n$ strands, such that its closure is a knot $K$. Let $\Jones_K(N)$ denote the $N$-colored Jones polynomial of $K$. Then, for $l\in \BN$:
\begin{align*}
\Jones_K(l+1) = q^{-w(\beta)-nl} \sum_{r=0}^{nl} (-1)^r   \CL_H \left( \widehat{\beta}^r \right)  q^{2r} .
%\\
%& = \sum_{r=0}^{(n-1)l} (-1)^r \left( q^{-nl} \left[ \CL_H \left( \widehat{\beta}^r \right) \right]_{\alpha_i=l}  - q^{-n(-l-2)} \left[ \CL_H \left( \widehat{\beta}^r \right) \right]_{\alpha_i=-l-2} \right) q^{2r} \\
%& + q^{-nl} \sum_{r=(n-1)l+1}^{nl} (-1)^r \left[ \CL_H \left( \widehat{\beta}^r \right) \right]_{\alpha_i=l} q^{2r} . %(q^{w(\beta)\frac{(l+1)^2-1}{2}}) 
\end{align*}
where $w$ is the writhe function, and numbers $\CL_H \left( \widehat{\beta}^r \right) $ designate the {\em abelianized Lefschetz numbers} associated with some homeomorphisms of the configuration space of $r$ points inside the punctured disk (more precisely defined in Section \ref{subsectionofmaintheorem}). These homeomorphisms are defined from the braid $\beta$ considered as an isotopy class of the punctured disk. 
%\end{thm}
\end{thm}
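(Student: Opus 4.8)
The plan is to read $\Jones_K(l+1)$ off the Reshetikhin--Turaev construction as a framing‑corrected quantum trace of the braid representation, to split that trace along weight spaces, and then to convert each weight‑space contribution into a homological Lefschetz number by means of \cite{Jules1}.

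Write $V$ for the $(l+1)$‑dimensional simple $\Uq$‑module (a quotient of the Verma module of highest weight $\lambda=l$), and let $\rho\colon\Bn\to\operatorname{GL}(V^{\otimes n})$ be the braid representation coming from the braiding of the category of $\Uq$‑modules. Since $\beta$ has a knot as closure, $\Jones_K(l+1)$ is, up to a framing correction and the chosen normalization convention, the quantum trace $\Tr_q(\rho(\beta))=\Tr\bigl(\pi(K^{-1})\,\rho(\beta)\bigr)$, where $\pi(K^{-1})$ is the action of the pivotal element on $V^{\otimes n}$. As $\rho(\beta)$ is $\Uq$‑linear it preserves the weight decomposition $V^{\otimes n}=\bigoplus_{r=0}^{nl}W_r$, where $W_r$ is the weight space of weight $nl-2r$, on which $\pi(K^{-1})$ acts by the scalar $q^{-(nl-2r)}$. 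Hence
\[
\Tr_q\bigl(\rho(\beta)\bigr)=\sum_{r=0}^{nl}q^{-(nl-2r)}\,\Tr\bigl(\rho(\beta)\big|_{W_r}\bigr)=q^{-nl}\sum_{r=0}^{nl}q^{2r}\,\Tr\bigl(\rho(\beta)\big|_{W_r}\bigr),
\]
and the ribbon/framing correction, with the normalization of $\Jones_K$ used here, collects into the single global factor $q^{-w(\beta)}$. This already accounts for the prefactors $q^{-w(\beta)-nl}$ and $q^{2r}$ of the claimed identity, and for the range $0\le r\le nl$ of the summation, since $W_r=0$ for $r>nl$.

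It remains to identify $\Tr(\rho(\beta)|_{W_r})$ with $(-1)^r\CL_H(\widehat\beta^r)$. Here I would invoke the comparison theorem of \cite{Jules1}: after matching the quantum parameters with the parameters of Lawrence's rank‑one local system on the configuration space $\Conf_r(D_n)$ of $r$ points in the $n$‑punctured disk, the braid representation on the weight space $W_r$ is isomorphic to the homological representation of $\Bn$ on $\Hlf_r(\Conf_r(D_n);\mathcal{L})$, the mapping class $\beta$ acting by the induced homeomorphism $\widehat\beta$. Because these locally finite homology groups with this local system are concentrated in degree $r$, the abelianized Lefschetz number $\CL_H(\widehat\beta^r)=\sum_i(-1)^i\Tr\bigl(\widehat\beta_*\mid\Hlf_i(\Conf_r(D_n);\mathcal{L})\bigr)$ reduces to $(-1)^r\Tr\bigl(\widehat\beta_*\mid\Hlf_r\bigr)$, which equals $(-1)^r\Tr(\rho(\beta)|_{W_r})$ by the isomorphism. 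Substituting into the displayed decomposition and absorbing the sign yields the stated formula.

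The main obstacle is the interface between the homological model, naturally stated for a generic Verma parameter, and the specialization $\lambda=l$ needed for the colored invariant: one must either show that the comparison isomorphism — or at least the resulting trace identity, which is an equality in a ring of Laurent polynomials — survives this possibly non‑generic specialization, or build the homological model directly for the simple module with Lawrence's local system specialized accordingly; this specialization is precisely the substitution of the variable $t$ in terms of $q$ and $l$ hidden in the definition of $\CL_H$ referred to in the statement. The two remaining points — the concentration of the homological representation in degree $r$, so that the alternating Lefschetz sum collapses to one term and no further fixed‑point bookkeeping is required, and the exact tracking of the pivotal scalar and the ribbon scalar so that the prefactors come out as $q^{-w(\beta)-nl}$ and $q^{2r}$ rather than some other clean powers — are essentially normalization checks, but must be carried out with care.
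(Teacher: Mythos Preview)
Your overall strategy---quantum trace, weight decomposition, homological identification---matches the paper's, but there is a genuine gap at the step where you invoke \cite{Jules1}. Theorem~\ref{Jules1Theorems} identifies $\Hrelm_r$ with the weight space $V_{n,r}$ of a tensor power of \emph{Verma} modules, not of the simple module $S^l$. Even after specializing the highest weight to $\alpha=l$, the Verma module $\bfVl$ remains infinite-dimensional and $(S^l)^{\otimes n}$ sits as a proper $\Bn$-submodule inside $(\bfVl)^{\otimes n}$; your $W_r$ is strictly smaller than $V_{n,r}$ for most $r$ (e.g.\ for $n=2$, $l=1$, $r=2$ one has $\dim W_r=1$ while $\dim V_{n,r}=3$). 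So the isomorphism $W_r\simeq\Hrelm_r$ you assert is false as stated, and this is not merely the generic-versus-specialized parameter issue you flag in your last paragraph.

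The paper closes this gap with Lemma~\ref{LeHuynhargument} (adapted from \cite{L-H}): because the closure of $\beta$ is a knot, the induced permutation is an $n$-cycle, and a direct inspection of the $R$-matrix then shows that every diagonal entry of $Q(\beta)K^{-1}$ on a basis vector $v_{i_1}\otimes\cdots\otimes v_{i_n}\in S_{n,l}$ with some $i_k>l$ must vanish. Hence $\Tr(Q(\beta)K^{-1},\CS^l)=\Tr(Q(\beta)K^{-1},S_{n,l})$, where $S_{n,l}=\bigoplus_{r=0}^{nl}V_{n,r}$ is built from the Verma weight spaces, and it is to \emph{these} $V_{n,r}$ that Theorem~\ref{Jules1Theorems} applies. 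This is precisely where the hypothesis that the closure of $\beta$ be a knot enters; in your outline that hypothesis plays no visible role. Once Lemma~\ref{LeHuynhargument} is in place, the remainder of your argument (concentration of the homology in degree $r$, Hopf trace formula, and factoring the augmentation through the cokernel so that the abelianized Lefschetz number is well defined) agrees with the paper's proof of Proposition~\ref{traceformulaforColoredJones} and Theorem~\ref{JonesisLefschetz}.
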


The generalized theory of Lefschetz numbers is recalled in Section \ref{surveyLefschetz}. For a homeomorphism $f$, the abelianized Lefschetz number of $f$ counts classes of fixed point of $f$, namely {\em (homological) Nielsen classes} of fixed points. It can be computed from the homological action of $f$ lifted to some (abelian) cover, and it is invariant under isotopy of $f$. For example, the abelianized Lefschetz number of $f$ contains the classical Lefschetz number of $f$ defined to be the sum of $f$ fixed points' degrees.

As abelianized Lefschetz numbers are computable from homology, we interpret  Theorem \ref{maintheoremIntro} using Poincaré--Lefschetz duality, as a weighted sum of intersection pairings between homology classes. It is the content of Theorem \ref{pairingformulaforJones}. One can then obtain the colored Jones polynomials of a knot from intersection theory between homology classes of the configuration space of points in the punctured disk. We perform such a computation for the trefoil knot as an example. 

The Poincaré--Lefschetz duality is applied in the context of homology with local coefficients in some ring of Laurent polynomials in the variable $q$ denoted  $\Laurent$. This ring suits with evaluation of $q$ to roots of unity for instance, which is the context of the volume conjecture for colored Jones polynomials as stated in \cite{MuMu}. More precisely, the formulae from Theorems \ref{maintheoremIntro} and \ref{pairingformulaforJones} still have a topological meaning when variable $q$ is specialized to a root of unity. This is a consequence of the fact that in \cite{Jules1}, Kohno's theorem is extended to the ring $\Laurent$, avoiding generic conditions on parameters. 

\subsection{Plan of the paper}

In Section \ref{Jules1summary}, we present quantum representation of braid groups on tensor products of $\Uq$ Verma modules on one hand. On the other hand, we introduce configuration spaces of points inside punctured disks, their associated homologies with local coefficients denoted $\Hrelm$, and the Lawrence representation of braid groups arising upon modules $\Hrelm$. We then summarize results from \cite{Jules1} relating both representations of braid groups over Verma modules and those over homological modules $\Hrelm$. 

In Section \ref{Sectionofmaintheorem}, we first suggest a general survey on Lefschetz numbers theory and relations with Nielsen fixed points theory. Then we define colored Jones polynomials from Verma modules and we interpret this definition by mean of homology modules $\Hrelm$. Finally we prove Theorem \ref{maintheoremIntro} stating that colored Jones polynomials compute Lefschetz numbers arising from homologies $\Hrelm$. 

In Section \ref{SectionIntersection}, we give a dual interpretation for homological modules $\Hrelm$ by application of the Poincaré--Lefschetz duality. We define homological modules denoted $\Hrelp$ that are dual modules of $\Hrelm$ regarding an intersection pairing. We present dual bases for $\Hrelm$ and $\Hrelp$. It allows to prove Theorem \ref{pairingformulaforJones} providing a formula for colored Jones polynomials in terms of intersection pairings between homology classes living in $\Hrelm \times \Hrelp$. We finally do an example of computation in the case of the trefoil knot. \\

{\bf Acknowledgment} Part of this work was achieved during the PhD of the author. The author thanks very much his advisor F. Costantino for asking this problem, for all his comments and remarks leading to this paper. The author is also very grateful to L.-H. Robert and S. Willets for useful and interesting discussions. 

\section{Homological model for quantum representations}\label{Jules1summary}

This section is devoted to an outline of \cite{Jules1} providing a homological model for some quantum representations. Namely, it leads to an isomorphism of representations of braid groups between homology modules built from configuration spaces of punctured disks and tensor product of (integral) Verma modules of the quantized algebra of $\slt$ denoted $\Uq$. In Section \ref{qVermaRep} we fix the set-up of Verma modules representations, giving all definitions it requires. In Section \ref{HXrmodel} we present the homological construction of representations due to Lawrence, involving definitions of configuration spaces on the punctured disks. In Section \ref{SummaryJules1} we state the theorem relating both representations. We begin with braid group representations because in Section \ref{cJonesSetup} the colored Jones polynomials for knots will be defined from them.

%We begin with recalling the Artin presentation for braid groups.

\begin{defn}[Braid groups]\label{Artinpres}
Let $n\in \BN$. The {\em braid group} on $n$ strands $\Bn$ is the group generated by $n-1$ elements satisfying the so called {\em ``braid relations"}:
$$\Bn = \left\langle \sigma_1,\ldots,\sigma_{n-1} \Big| \begin{array}{ll} \sigma_i \sigma_j = \sigma_j \sigma_i & \text{ if } |i-j| \ge 2 \\ 
\sigma_i \sigma_{i+1} \sigma_i = \sigma_{i+1} \sigma_i \sigma_{i+1} & \text{ for } i=1,\ldots, n-2 \end{array} \right\rangle$$
\end{defn} 

\subsection{Quantum Verma modules}\label{qVermaRep}

We define notations for quantum numbers, factorials and binomials.

\begin{Not}\label{quantumq}
Let $i$ be a positive integer. We define the following elements of $\BZ \left[ q^{\pm 1} \right]$.
\begin{equation*}
\left[ i \right]_q := \frac{q^i-q^{-i}}{q-q^{-1}} , \text{  } \left[ k \right]_q! := \prod_{i=1}^k \left[ i \right]_q , \text{  } \qbin{k}{l}_q := \frac{\left[ k \right]_q!}{\left[ k-l \right]_q! \left[ l \right]_q!} .
\end{equation*}
\end{Not}

\subsubsection{An integral version for $\Uq$}\label{halfLusztigversion}

In this section, we define an integral version for the quantized algebra associated with $\slt$ that will be central for the present work. 
First, we give the most standard definition of the quantum algebra $\Uq$ as a vector space over a rational field.

\begin{defn}\label{Uqnaif}
The algebra $\Uq$ is the algebra over $\BQ(q)$ generated by elements $E,F$ and $K^{\pm 1}$, satisfying the following relations:
\begin{align*}
KEK^{-1}=q^2E & \text{ , } KFK^{-1}=q^{-2}F \\
\left[E, F \right] = \frac{K-K^{-1}}{q-q^{-1}} & \text{ and }
KK^{-1}=K^{-1}K=1 .
\end{align*}
The algebra $\Uq$ is endowed with a coalgebra structure defined by $\Delta$ and $\epsilon$ as follows:
\[
\begin{array}{rl}
\Delta(E)= 1\otimes E+ E\otimes K, & \Delta(F)= K^{-1}\otimes F+ F\otimes 1 \\
\Delta(K) = K \otimes K, & \Delta(K^{-1}) = K^{-1}\otimes K^{-1} \\
\epsilon(E) = \epsilon(F) = 0, & \epsilon(K) = \epsilon(K^{-1}) = 1
\end{array}
\]
and an antipode defined as follows:
\[
S(E) = EK^{-1}, S(F)=-KF,S(K)=K^{-1},S(K^{-1}) = K.
\]
This provides a {\em Hopf algebra} structure, so that the category of modules over $\Uq$ is monoidal.
\end{defn}

We are interested in an {\em integral version} of this algebra (see \cite[Definition~5.3]{Jules1}), namely one over the ring $\Laurent_0 := \BZ\left[ q^{\pm 1} \right]$. Indeed, we define a version similar to the one introduced by Lusztig in \cite{Lus}, but we only consider the so called {\em divided powers of $F$} as generators, not those of $E$. This version is introduced in \cite{Hab,JK,Jules1} (with subtle differences in the definitions of divided powers for $F$). Let:
\[
F^{(n)} :=  \frac{(q-q^{-1})^n}{\left[ n \right]_q!} F^n .
\]
%Let  be the ring of integral Laurent polynomials in the variable $q$. 

\begin{defn}[Half integral algebra]\label{Halflusztig}
Let $\UqhL$ be the $\Laurent_0$-subalgebra of $\Uq$ generated by $E$, $K^{\pm 1}$ and $F^{(n)}$ for $n\in \BN^*$.  
\end{defn}
We call it a {\em half integral version} for $\Uq$, the word half to illustrate that we consider only half of divided powers as generators.

\begin{rmk}[Relations in $\UqhL$, {\cite[(16)~and~(17)]{JK}}]\label{relationsUqhL}
The relations among generators involving divided powers are the following ones:
\[
KF^{(n)}K^{-1} = q^{-2n}F^{(n)}
\]
\[
\left[ E, F^{(n+1)}  \right] = F^{(n)} \left( q^{-n} K - q^n K^{-1}  \right) \text{ and }
F^{(n)} F^{(m)} = \qbin{n+m}{n}_q F^{(n+m)} .
\]
Together with relations from Definition \ref{Uqnaif}, they complete a presentation of $\UqhL$. 

$\UqhL$ inherits a Hopf algebra structure, making its category of modules monoidal. The coproduct is given by:
\[
\Delta(K) = K \otimes K \text{ , } \Delta(E) = E \otimes K + 1 \otimes E , \text{ and } \Delta(F^{(n)}) = \sum_{j=0}^n q^{-j(n-j)}K^{j-n} F^{(j)} \otimes F^{(n-j)}. 
\]
\end{rmk}
%
%\begin{prop}
%The algebra $\UqhL$ admits the following set as an $\Laurent_0$-basis:
%\[
%\left\lbrace K^l E^m F^{(n)} , l \in \BZ, m,n \in \BN \right\rbrace .
%\]
%\end{prop}

\subsubsection{Verma modules and braiding}\label{VermaBraiding}

Now we define a special family of universal objects in the category of $\Uq$-modules, we express their presentation in the special case of $\UqhL$ and we give a braiding for this family of modules. Namely, the {\em Verma modules} are infinite dimensional modules depending on a parameter. Again, we work with this parameter as a variable included in an integral ring, letting $\Laurent := \BZ \left[ q^{\pm 1} , s^{\pm 1} \right]$. %In \cite{JK}, they give an explicit presentation for the integral Verma-module of $\UqhL$, that we recall here.

\begin{defn}[Verma modules for $\UqhL$]\label{GoodVerma}
Let $V^{s}$ be the Verma module of $\UqhL$. It is the infinite $\Laurent$-module, generated by vectors $\lbrace v_0, v_1 \ldots \rbrace$, and endowed with an action of $\UqhL$, generators acting as follows:
\[
K \cdot v_j = s q^{-2j} v_{j} \text{ , } E \cdot v_j = v_{j-1} \text{ and } F^{(n)} v_j = \left( \qbin{n+j}{j}_q \prod_{k=0}^{n-1} \left( sq^{-k-j} - s^{-1}q^{j+k} \right) \right) v_{j+n} .
\]
\end{defn}

\begin{rmk}[Weight vectors]\label{weightdenomination}
We will often make implicitly the change of variable $s := q^{\alpha}$ and denote $V^s$ by $V^{\alpha}$. This choice made to use a practical and usual denomination for eigenvalues of the $K$ action (which is diagonal in the given basis). Namely we say that vector $v_j$ is {\em of weight $\alpha - 2j$}, as $K \cdot v_j = q^{\alpha - 2j} v_j$. The notation with $s$ shows a  strictly speaking integral structure of Laurent polynomials. 
\end{rmk}

\begin{defn}[$R$-matrix, {\cite[(21)]{JK}}]\label{goodRmatrix}
Let $s=q^{\alpha}$ , $t=q^{\alpha'}$. The operator $q^{H \otimes H /2}$ is the following:
\[
q^{H \otimes H /2}:
\bfct
V^{s} \otimes V^{t} & \to & V^{s} \otimes V^{t}  \\
v_i \otimes v_j & \mapsto & q^{(\alpha - 2i)(\alpha'-2j)} v_i \otimes v_j 
\efct .
\]
We define the following R-matrix:
\[
R : q^{H \otimes H/2} \sum_{n=0}^\infty q^{\frac{n(n-1)}{2}} E^n \otimes F^{(n)} .
\]
It is not a well defined object as the sum is infinite, but whenever it is applied to Verma modules vectors, the sum becomes finite (thanks to the {\em locally nilpotence} of $E$). It justifies the fact that it is a well defined operator on tensor product of Verma modules. .%which will be well defined as an operator on Verma modules in what follows. 
\end{defn}

\begin{prop}[{\cite[Theorem~7]{JK}}]\label{GoodBnrep}
Let $V^s$ and $V^t$ be Verma modules of $\UqhL$ (with $s=q^{\alpha}$ and $t=q^{\alpha'}$). Let $\RR$ be the following operator:
\[
\RR: q^{-\alpha \alpha' /2} T \circ R 
\]
where $T$ is the twist defined by $T(v\otimes w ) = w \otimes v$. Then $\RR$ provides a braiding for $\UqhL$ integral Verma modules. 
Namely, the morphism:
\[
Q: \bfct
\Laurent_1\left[ \Bn \right] & \to & \End_{\Laurent, \UqhL} \left({V^s}^{\otimes n}\right)  \\
\sigma_i & \mapsto & 1^{\otimes i-1} \otimes \RR \otimes 1^{\otimes n-i-2}
\efct
\]
is an $\Laurent$-algebra morphism. It provides a representation of $\Bn$ such that its action commutes with that of $\UqhL$. 
\end{prop}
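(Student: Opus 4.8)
The plan is to reduce the assertion to the classical fact that the universal $R$-matrix braids the category of generic $\Uq$-modules, and then to check that nothing is lost upon restricting to the integral subalgebra $\UqhL$ and the integral ring $\Laurent$. First I would verify that $\RR = q^{-\alpha\alpha'/2}\,T\circ R$ is a well-defined $\Laurent$-linear endomorphism of $V^s\otimes V^t$. As already observed after Definition~\ref{goodRmatrix}, applying $R$ to a vector $v_i\otimes v_j$ kills all terms of the sum with $n>i$, since $E^n\cdot v_i=0$ there, so the series truncates; what additionally needs to be checked is that the surviving coefficients lie in $\Laurent=\BZ[q^{\pm1},s^{\pm1}]$. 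The scalars $q^{n(n-1)/2}$ are integral; the divided power $F^{(n)}$ acts on $V^t$ through the quantum binomial $\qbin{n+j}{j}_q$ and the product $\prod_{k=0}^{n-1}(tq^{-k-j}-t^{-1}q^{j+k})$, both in $\Laurent$ by Definition~\ref{GoodVerma}; and the normalized diagonal factor $q^{-\alpha\alpha'/2}q^{H\otimes H/2}$ acts on $v_i\otimes v_j$ by a monomial in $q$, $s$, $t$ whose exponents one must verify to be integral. This last point is exactly what the half-integral version is designed for: the $R$-matrix, a priori a power series in $E\otimes F$ carrying denominators $[n]_q!$, rewrites as a power series in $E^n\otimes F^{(n)}$ with integral coefficients.

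Next I would establish the $\UqhL$-intertwining property, i.e. that $\RR$ commutes with the $\UqhL$-action, equivalently that $R\,\Delta(x)=\Delta^{\mathrm{op}}(x)\,R$ for every $x\in\UqhL$. This identity holds over $\BQ(q)$ by the classical quasi-triangularity of $\Uq$, and it is part of \cite[Theorem~7]{JK}; since both sides are $\Laurent$-linear operators on $V^s\otimes V^t$ and $\Laurent$ embeds into the field $\BQ(q,s,t)$, agreement after that base change forces agreement over $\Laurent$. One could equally verify it directly on the generators $E$, $K^{\pm1}$, $F^{(n)}$ using only the relations and coproduct formulae of Remark~\ref{relationsUqhL}. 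The same embedding argument then also delivers the Yang--Baxter equation $(\RR\otimes\Id)(\Id\otimes\RR)(\RR\otimes\Id)=(\Id\otimes\RR)(\RR\otimes\Id)(\Id\otimes\RR)$ on a triple tensor product of Verma modules (specialised to ${V^s}^{\otimes 3}$ for the braid action), which is a consequence of the hexagon axioms for $R$; alternatively it is a finite identity on each weight space because every sum involved truncates.

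With these in hand the conclusion is formal. The relation $\sigma_i\sigma_j=\sigma_j\sigma_i$ for $|i-j|\ge 2$ is automatic because $Q(\sigma_i)$ and $Q(\sigma_j)$ act non-trivially on disjoint pairs of tensor factors. Restricting attention to the three relevant factors, the braid relation $\sigma_i\sigma_{i+1}\sigma_i=\sigma_{i+1}\sigma_i\sigma_{i+1}$ is precisely the Yang--Baxter equation above. Hence $\sigma_i\mapsto 1^{\otimes i-1}\otimes\RR\otimes 1^{\otimes n-i-2}$ respects all defining relations of $\Bn$ from Definition~\ref{Artinpres}, so it extends to an $\Laurent$-algebra morphism $Q\colon\Laurent[\Bn]\to\End_{\Laurent,\UqhL}({V^s}^{\otimes n})$; and by the intertwining property its image consists of $\UqhL$-linear maps, so the $\Bn$-action commutes with that of $\UqhL$.

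The step I expect to require the most care is the integrality bookkeeping of the first step: one must make sure that the rewriting of $E\otimes F$ in terms of $E^n\otimes F^{(n)}$, together with the normalization $q^{-\alpha\alpha'/2}$ of the diagonal part, genuinely lands in $\BZ[q^{\pm1},s^{\pm1}]$ rather than in a ring where a square root of $q$ (or of $st$) has been adjoined. The braiding and Yang--Baxter identities themselves are then pure base change from the well-understood generic case.
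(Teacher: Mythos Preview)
The paper does not supply its own proof of this proposition: it is stated with the attribution \cite[Theorem~7]{JK} and used as a black box thereafter. So there is no argument in the paper for you to match against.

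Your outline is the standard one, and it is essentially what \cite{JK} carries out: first the integrality of the operator on Verma modules (using that $E^n\otimes F^{(n)}$ replaces $E^n\otimes F^n/[n]_q!$), then quasi-triangularity $R\,\Delta=\Delta^{\mathrm{op}}R$ and the Yang--Baxter equation inherited from the generic $\Uq$ setting, and finally the formal passage to a $\Bn$-representation. You correctly single out the integrality bookkeeping as the only genuinely delicate step; in particular, your caution about whether the normalized diagonal factor $q^{-\alpha\alpha'/2}q^{H\otimes H/2}$ lands in $\Laurent$ or only in an extension by a square root is warranted, and in \cite{JK} this is handled explicitly (and is the reason the paper later notes in Remark~\ref{framingRemark} that the quadratic writhe correction has been absorbed into $\RR$). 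Everything else in your sketch is routine base change from $\BQ(q,s)$ to $\Laurent$, exactly as you say.
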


\begin{rmk}\label{coloredquantum}
One can consider a braid action over $V^{s_1} \otimes \cdots \otimes V^{s_n}$ so that the morphism $Q$ is well defined but becomes multiplicative (i.e. algebra morphism) only when restricted to the pure braid group $\PBn$, so to be an endomorphism. See \cite[Appendix]{Jules2} for a detailed explanation. The latter is encouraging for generalizing the present work for colored Jones polynomials of knots to links.  %Then one can consider the induced representation of $\Bn$, see Definition \ref{inducedpure}, or restrict to a representation of the colored braid groupoid. 
\end{rmk}

\begin{defn}[Weight spaces, {\cite[Remark~5.13]{Jules1}}]
The representation of $\Bn$ on ${V^s}^{\otimes n}$ splits as follows:
\[
{V^s}^{\otimes n} \simeq \bigoplus_{r \in \BN} V_{n,r},
\]
where $V_{n,r}:=\ker\left(K-s^n q^{-2r}\Id \right)$.
\end{defn}

In Section \ref{cJonesSetup} we will define the colored Jones invariant for knots out of these representations for braid groups. 

\subsection{Homological representations}\label{HXrmodel}

\subsubsection{Configuration spaces}

\begin{defn}\label{configspaceofpoints}
Let $r\in \BN$, $n \in \BN$, $D\in \BC$ be the unit disk, and $\left\lbrace w_1 , \ldots , w_n \rbrace\right. \in D^n$ points lying on the real line in the interior of $D$. Let $D_n = D \setminus \left\lbrace w_1 , \ldots , w_n \rbrace\right.$ be the unit disk with $n$ punctures. Let:
\[
\Conf_r(D_n):= \left\lbrace (z_1 , \ldots , z_r ) \in (D_n)^r \text{ s.t. } \begin{array}{c} z_i \neq z_j \forall i,j  \end{array} \right\rbrace
\]
be the configuration space of points in the punctured disk $D_n$. 
We define the following space:
\begin{eqnarray}\label{NotConfig}
X_r(w_1 , \ldots , w_n) & = &  \Conf_r(D_n) \Big/ \Sk_r. 
\end{eqnarray}
to be the space of {\em unordered} configurations of $r$ points inside $D_n$, where the permutation group $\Sk_r$ acts by permutation on coordinates.
%The points $w_1 , \ldots , w_n$ will always be chosen so that they lie in the interior of $D$ and over the real line.
\end{defn} 

When no confusion arises in what follows, we omit the dependence in $w_1 , \ldots , w_n$ to simplify notations. All the following computations rely on a choice of base point that we fix from now on.

\begin{defn}[Base point]\label{basepoint}
Let ${\pmb \xi^r}= \lbrace  \xi_1 , \ldots , \xi_r \rbrace$ be the base point of $X_r$
%\[
%{\pmb \xi^r} 
%\]
chosen so that $ \xi_i \in \partial D_n$ $\forall i$ with negative imaginary parts, and so that:
\[
\Re(w_0)< \Re(\xi_r)< \Re(\xi_{r-1})< \ldots <\Re(\xi_1) <\Re(w_1).  
\]
\end{defn}
We illustrate the disk with chosen points in the following figure.

\begin{equation*}
\begin{tikzpicture}[scale=0.7]
\node (w0) at (-3,0) {};
\node (w1) at (1,0) {};
\node (w2) at (2,0) {};
\node[gray] at (2.8,0) {\ldots};
\node (wn) at (3.4,0) {};
%\node (wn1) at (3,1) {};
%\node (wn) at (5,1) {};

%\node (x0) at (-4.8,-3) {};
%%\node at (-4.7,-3) {$\ldots$};
%\node  (x1) at (-4.3,-3) {};
%\node  (x2) at (-4.25,-3) {};
%\node  (xn1) at (,-3) {};
%\node  (xn) at (4,-3) {};

%\draw[dashed,gray] (-5,0) -- (5,0) -- (5,-3);
\draw[thick,gray] (4,2) -- (-3,2) -- (-3,-2) -- (4,-2) -- (4,2);% node[right] {$\partial D_n$};

%\node[above,red] at (-3.5,0) {$k_0$};
%\node[above,red] at (0,0) {$k_1$};
%\node[above,red] at (4,0) {$k_{n-1}$};

\node[below,red] at (-1,-2) {$\xi_r$};
\node[below,red] at (-0.3,-2) {$\xi_{r-1}$};
%\node[below,red] at (0.15,-2) {$\xi_{r''}$};
\node[below=3pt,red] at (0.3,-2) {\small $\ldots$};
\node[below,red] at (0.8,-2) {$\xi_{1}$};

\node at (-1,-2)[red,circle,fill,inner sep=1pt]{};
\node at (-0.3,-2) [red,circle,fill,inner sep=1pt]{};
%\node at (0.15,-2) [red,circle,fill,inner sep=1pt]{};
\node at (0.8,-2) [red,circle,fill,inner sep=1pt]{};

%
%\node[red] at (-3.8,-1) {$\ldots$};
%\node[red] at (-1.2,-0.9) {$\ldots$};
%\node[red] at (0.25,-0.9) {$\ldots$};
%\node[red] at (2.9,-0.5) {$\ldots$};

\node[gray] at (w0)[left=5pt] {$w_0$};
\node[gray] at (w1)[above=5pt] {$w_1$};
\node[gray] at (w2)[above=5pt] {$w_2$};
%\node[gray] at (wn1)[above=5pt] {$w_{n-1}$};
\node[gray] at (wn)[above=5pt] {$w_n$};
\foreach \n in {w1,w2,wn}
  \node at (\n)[gray,circle,fill,inner sep=3pt]{};
\node at (w0)[gray,circle,fill,inner sep=3pt]{};
\end{tikzpicture} .
\end{equation*}

%In what follows, distances between the $\xi_i$'s may be deformed in drawings but the order on real parts remains the important fact. 
We draw a square boundary for the disk, in order for the reader not to confuse it with arcs we will be drawing inside.

%\end{defn}

We give a presentation of $\pi_1(X_r, {\pmb \xi^r})$ as a braid subgroup ({\em the mixed braid group}), which can be deduced from the one given in the introduction of \cite{Z1}. See \cite[Remark~2.2~and~Example~2.3]{Jules1} for the correspondence between braids and elements of $\pi_1(X_r, {\pmb \xi^r})$. 

\begin{rmk}[{\cite[Remark~2.2]{Jules1}}]\label{pi_1X_r}
The group $\pi_1(X_r, {\pmb \xi^r})$ is isomorphic to the subgroup of $\CB_{r+n}$ generated by:
\[
\langle \sigma_1 , \ldots , \sigma_{r-1}, B_{r,1} , \ldots , B_{r,n} \rangle 
\]
where the $\sigma_i$ ($i=1,\ldots ,r-1$) are standard generators of $\CB_{r+n}$, and $B_{r,k}$ (for $k=1,\ldots ,n$) is the following pure braid:
\[
B_{r,k} = \sigma_{r} \cdots \sigma_{r+k-2} \sigma_{r+k-1}^2 \sigma_{r+k-2}^{-1} \cdots \sigma_{r}^{-1} .
\]
\end{rmk}

%To see the correspondence between loops in $X_r$ and generators of the above braid subgroup we draw two examples. 

%\begin{ex}
%Two types of braid generators for $\pi_1(X_r, {\pmb \xi^r})$ are given in Remark \ref{pi_1X_r}, which correspond to two types of loops generating $\pi_1(X_r, {\pmb \xi^r})$. We give examples for both kinds.
%
%\begin{itemize}
%\item The braid $\sigma_1$ corresponds to a loop swapping $\xi_r$ and $\xi_{r-1}$ letting other base point coordinates fixed. This can be seen by drawing the movie of the loop in Figure \ref{pi1sigma}.
%\begin{figure}[h!]
%\begin{center}
%\def\svgwidth{0.5\columnwidth}
%%\def\svgscale{0.3}
%\input{pi1sigma.pdf_tex}
%\caption{Generator $\sigma_1$. \label{pi1sigma}}
%\end{center}
%\end{figure}
%
%\item The braid $B_{r,k}$ for $k \in \lbrace 1 , \ldots , n \rbrace$ corresponds to $\xi_1$ running once around $w_k$ before going back keeping other base point coordinates fixed. The correspondence in terms of standard braid generators can be seen by drawing the movie of this loop in Figure \ref{pi1B}.
%
%\begin{figure}[h!]
%\begin{center}
%\def\svgwidth{0.5\columnwidth}
%%\def\svgscale{0.3}
%\input{pi1B.pdf_tex}
%\caption{Generator $B_{r,k}$ \label{pi1B}}
%\end{center}
%\end{figure}
%\end{itemize}
%\end{ex}

Using this set up, we define the local system of interest. 

\begin{defn}[Local system $L_r$.]\label{localsystXr}
Let $L_r(w_1,\ldots ,w_n)$ be the local system defined by the following algebra morphism:
\[
\rho_r : \bfct
\BZ\left[ \pi_1(X_r, {\pmb \xi^r}) \right] & \to & \Laurent := \BZ \left[ s^{\pm1} , t^{\pm 1}  \right]\\
\sigma_i & \mapsto & t \\
B_{r,k} & \mapsto & s^2 . \\
\efct
\]
In what follows we will use the notation $q^{\alpha}:=s$. Using this notation, the morphism becomes:
\[
\rho_r : \bfct
\BZ\left[ \pi_1(X_r, {\pmb \xi^r}) \right] & \to & \Laurent := \BZ \left[ q^{\pm \alpha} , t^{\pm 1}  \right]\\
\sigma_i & \mapsto & t \\
B_{r,k} & \mapsto & q^{2\alpha} . \\
\efct
\]
(We may sometimes omit the dependence in $(w_1,\ldots,w_n)$ in the notations to simplify them.)
\end{defn}

\subsubsection{Homology with local coefficients}

\begin{defn}[{\cite[Definition~2.6]{Jules1}}]\label{defofH}
Let $r \in \BN$, and let $w_0 = -1$ be the leftmost point on the boundary of the disc $D_n$, we define the following set:
\[
X_r^-(w_1 , \ldots , w_n) = \left\lbrace \left\lbrace z_1 , \ldots , z_r \right\rbrace \in X_r(w_1 , \ldots , w_n) \text{ s.t. } \exists i, z_i=w_0 \right\rbrace .
\]
We let {\em $\Hlf$} denote the homology of locally finite chains, and we use the following notations for relative homology modules with local coefficients in the ring $\Laurent$:
\[
\Hrelm_r := \Hlf _r \left( X_r, X^{-}_r ; \Laurent \right).
\] 
See next remark for precision of such construction. 
\end{defn}

\begin{rmk}
We recall how the above homology modules are constructed, namely we work with the following homology theories:
\begin{itemize}
%We work with the following homology theories:
\item the {\em locally finite} version of the singular homology, see \cite[Appendix]{Jules1}.
\item the homology of the pair $(X_r,X_r^{-})$, see \cite[Appendix]{Jules1}. 
\item the local ring of coefficients $\Laurent$. Let $\rho_r$ be the morphism from Definition \ref{localsystXr}, the local ring action can be seen as the homology associated with the following chain complex (two equivalent definitions):
\[
C_{\bullet}(\widetilde{X_r} , \BZ ) \otimes_{\pi_1(X_r)} \Laurent \simeq C_{\bullet} (\widehat{X_r}) ,
\]
where $\widetilde{X_r}$ is the universal cover of $X_r$, $\widehat{X_r}$ the one associated with the kernel of the morphism $\rho_r$. Then in the above chain complexes, on the left the action of $\pi_1(X_r)$ on the right tensor is given by $\widehat{\rho_r}$; while on the right term, the chain complex of $\widehat{X_r}$ is endowed with the action of $\Laurent$ by deck transformations. 
\item The homology of locally finite chains is, in our case, isomorphic to the {\em Borel-Moore homology} that can be defined as follows:
\[
H_{\bullet}^{\mathrm{BM}} (X) = \varprojlim H_{\bullet} \left( X , X \setminus A \right) 
\]
where the inverse limit is taken over all compact subsets $A$ of $X$. For the relative version, see \cite[Appendix]{Jules1}. 
\end{itemize}
Following computations depend on a choice of lift for the base point ${\pmb \xi^r}$ that we make now and we denote it $\widehat{{\pmb \xi^r}}$. 
\end{rmk}

We define classes in $\Hrelm_r$.

\begin{defn}[Code sequences diagrams]\label{codesequences}
Let $(k_0 , \ldots,  k_{n-1})$ such that $\sum k_i = r$. we define $U(k_0 , \ldots , k_{n-1})$ to be the following diagram:
\begin{equation*}
U(k_0 , \ldots , k_{n-1}) = \vcenter{\hbox{\begin{tikzpicture}[scale=0.6, every node/.style={scale=0.7},decoration={
    markings,
    mark=at position 0.5 with {\arrow{>}}}
    ]
\node (w0) at (-5,0) {};
\node (w1) at (-3,0) {};
\node (w2) at (-1,0) {};
\node[gray] at (0.0,0.0) {\ldots};
\node (wn1) at (1,0) {};
\node (wn) at (3,0) {};

\draw[dashed] (w0) -- (w1) node[midway,above] (k0) {$k_0$};
\draw[dashed] (w1) to node[midway,above]  (k1) {$k_1$} (w2);
%\draw[dashed] (w0) to node[pos=0.85] (k2) {$k_{n-2}$} (wn1);
\draw[dashed] (wn1) to node[midway,above]  (k3) {$k_{n-1}$} (wn);

\node[gray] at (w0)[left=5pt] {$w_0$};
\node[gray] at (w1)[above=5pt] {$w_1$};
\node[gray] at (w2)[above=5pt] {$w_2$};
\node[gray] at (wn1)[above=5pt] {$w_{n-1}$};
\node[gray] at (wn)[above=5pt] {$w_n$};
\foreach \n in {w1,w2,wn1,wn}
  \node at (\n)[gray,circle,fill,inner sep=3pt]{};
\node at (w0)[gray,circle,fill,inner sep=3pt]{};

\coordinate (c) at (-2,-2);

\draw[double,thick,red] (k0) -- (k0|-c);
\draw[double,thick,red] (k1) -- (k1|-c);
%\draw[double,thick,red] (k2) -- (0.05,-3);
\draw[double,thick,red] (k3) -- (k3|-c);
%
%
%\draw[dashed,gray] (-5,-3) -- (3.5,-3);
%\draw[dashed,gray] (3.5,-3) -- (3.5,-4);

\node[gray,circle,fill,inner sep=0.8pt] at (-4.8,-3) {};
\node[below,gray] at (-4.8,-3) {$\xi_r$};
\node[below=5pt,gray] at (-4.2,-3) {$\ldots$};
\node[gray,circle,fill,inner sep=0.8pt] at (-3.5,-3) {};
\node[below,gray] at (-3.5,-3) {$\xi_1$};

%\draw[red] (-4.8,-3) -- (-4.5,-3.8);
%\draw[red] (-3.5,-3) -- (-3,-3.9);

\draw[red] (-4.8,-3) -- (k0|-c);
\draw[red] (-4.5,-3) -- (k0|-c);
\draw[red] (-3.5,-3) -- (k3|-c);
\draw[red] (-3.8,-3) -- (k3|-c);
%\draw[red] (-3.9,-4) -- (0.05,-3);
\draw[red] (-4.2,-3) -- (k1|-c);
\draw[red] (-3.95,-3) -- (k1|-c);
%\node[red] at (-2,-3.4) {$\ldots$};

\draw[gray] (-5,0) -- (-5,2);
\draw[gray] (-5,0) -- (-5,-3);
\draw[gray] (-5,-3) -- (4,-3) -- (4,2) -- (-5,2);% node[right] {$\partial D_n$};

\end{tikzpicture}}} .%\text{ ~``multi - arcs'' }
\end{equation*}

\end{defn}

For $U(k_0 , \ldots , k_{n-1})$ defined above, let:
\[
\phi_i : I_i \to D_n
\]
be the embedding of the dashed black arc number $i$ indexed by $k_{i-1}$, where $I_i$ is a unit interval.
Let $\Delta^k$ be the standard (open) $k$ simplex:
\[
\Delta^k = \lbrace 0 < t_1 < \cdots < t_k < 1 \rbrace 
\]
for $k \in \BN$.
For all $i$, we consider the map $\phi^{k_{i-1}}$:
\[
\phi^{k_{i-1}}: \bfct
\Delta^{k_{i-1}} & \to & X_{k_{i-1}} \\
(t_1, \ldots , t_{k_{i-1}} ) & \mapsto & \lbrace \phi_i(t_1) , \ldots, \phi_i(t_{k_{i-1}}) \rbrace
\efct
\]
which is a singular locally finite $(k_{i-1})$-chain and moreover a cycle in $X_{k_{i-1}}$ (see \cite[Section~3.1]{Jules1}). %One can think of the image of the simplex $\Delta^{k_{i-1}}$ to be the space of configurations of $k_{i-1}$ points inside the dashed arc. It provides a locally finite cycle as going to a face of the simplex corresponds to going to a collision between either two configuration points, either a configuration point with a puncture. Namely, points in the boundary of the simplex are removed points of the configuration space $X_r$, these simplices are closed submanifold going to infinity, so that they are locally finite cycles, see the Appendix. There is a cycle associated with each dashed arc, so that by considering the product of maps $(\phi^{k_{0}},\ldots,\phi^{k_{n-1}})\in \Conf_r(D_n)$ which is naturally sent to $X_r$, one  generalizes this fact by associating an $r$-cycle of $X_r$ with each object $X(k_0 , \ldots , k_{n-1})$, see following Remark \ref{chainwithdisjointsupport}. This shows how the union of dashed arcs defines a class in the homology with coefficient in $\BZ$.

To get a class in the homology with $\Laurent$ coefficients, one may choose a lift of the chain to the cover $\widehat{X_r}$ associated with the morphism $\rho_r$. We do so using the red handles of $U(k_0 , \ldots , k_{n-1})$ (the union of red paths) with which is naturally associated a path:
\[
{\bf h}=\lbrace h_1,\ldots,h_r \rbrace: I \to X_r
\]
joining the base point $\pmb{\xi}$ and the $r$-chain assigned to the union of dashed arcs. At the cover level ($\widehat{X_r}$) there is a unique lift $\widehat{{\bf h}}$ of ${\bf h}$ that starts at $\widehat{{\pmb \xi}}$. The lift of $U(k_0, \ldots , k_{n-1})$ passing by $\widehat{\pmb h} (1)$ defines a cycle in $\Crelm_r$, and we still call $U(k_0 , \ldots , k_{n-1})$ the associated class in $\Hrelm_r$ as we will only use this class out of the original object. 

\begin{defn}[Code sequences]\label{multiarcsclass}
Following the above construction, we naturally assign a class $U(k_0 , \ldots , k_{n-1}) \in \Hrelm_r$ for any $n$-tuple such that $\sum k_i = r$. This class is called a {\em code sequence}. 
\end{defn}

\begin{prop}[Code sequences generate the homology, {\cite[Proposition~3.6]{Jules1}}]\label{homologystructure}
Let $r \in \BN$, the homology of the pair $(X_r, X_r^{-})$ has the following structure:
\begin{itemize}
\item The module $\Hrelm_r$ is free over $\Laurent$. 
\item The set of code sequences:
\[
\CU := \lbrace U(k_0, \ldots, k_{n-1}) \text{ s.t. } \sum k_i = r \rbrace
\]
yields a basis of $\Hrelm_r$. 
\item The module $\Hrelm_r$ is the only non vanishing module of the complex $\Hlf_{\bullet}\left( X_r , X_r^{-}; \Laurent \right) $.
\end{itemize}
\end{prop}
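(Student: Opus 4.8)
The plan is to build a handle decomposition of the $2r$-dimensional configuration space $X_r$ --- equivalently, a proper Morse function bounded below --- adapted to the local system $\rho_r$ and to the closed subspace $X_r^-$, in which all handles have the same middle index $r$ and are in bijection with the code sequences $\CU$. Since $X_r^-$ is closed, $\Hrelm_\bullet$ can be identified with the Borel--Moore (locally finite) homology of the open manifold $X_r \setminus X_r^-$, and for a $2r$-manifold the locally finite Morse complex places a handle of index $k$ in homological degree $2r - k$. Thus, once one knows that every handle has index $r$, the Morse complex is the concentrated complex $0 \to \Laurent\langle\CU\rangle \to 0$ in degree $r$, and all three assertions follow at once: $\Hrelm_k = 0$ for $k \neq r$, and $\Hrelm_r$ is free over $\Laurent$ on the handles.

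For the construction I would first stratify $X_r$ by recording, for a configuration $\{z_1,\dots,z_r\}$, how many of the $z_j$ have real part in each of the $n$ intervals $(w_0,w_1),\dots,(w_{n-1},w_n)$, together with the vertical orderings inside a given interval and the incidences of the $z_j$ with the real axis or with the vertical rays below the punctures. On the ordered cover $\Conf_r(D_n)$ I would then define an $\Sk_r$-equivariant gradient-like flow which pushes each configuration point horizontally onto the arc hanging below its interval (in the spirit of Fox--Neuwirth cells), lets points that get dragged leftward past $w_0$ fall into $X_r^-$, and finally normalizes vertically the heights of the points sitting on each arc; the whole construction must be carried out compatibly with the corners of $X_r$. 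The descending sets of this flow, read modulo $X_r^-$, are the handles of the decomposition.

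A descending set not absorbed into $X_r^-$ retracts onto a configuration supported on the union of the hanging arcs, carrying exactly $k_i$ points on the arc below $(w_i,w_{i+1})$; the surviving data is thus an ordered partition $(k_0,\dots,k_{n-1})$ with $\sum_i k_i = r$, that is, precisely a diagram $U(k_0,\dots,k_{n-1})$. Its $r$ free parameters are the positions of the points along the one-dimensional union of arcs, while the remaining $r$ transverse directions are collapsed --- either onto $X_r^-$ or by the flow --- so every handle has index $r$; in particular one must check that the local model $\Conf_k$ of an open interval, relative to the degeneration sending a point off the left end, has locally finite homology free of rank one in degree $k$ and nothing below, which also pinpoints why we work with locally finite rather than ordinary homology (the twisted ordinary $H_0$ of $X_r\setminus X_r^-$ is in general nonzero, whereas its locally finite $H_0$ vanishes). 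Finally one identifies the canonical generator attached at each code-sequence configuration with the geometric cycle of Definition~\ref{multiarcsclass}: it is, up to a unit of $\Laurent$, the image of the product of the arc-simplices $\phi^{k_{i-1}}$, and its unique lift through $\widehat{\pmb h}(1)$ is the prescribed lift to $\widehat{X_r}$; this pins the basis down to $\CU$.

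The technical heart is the flow of the second step. Because configuration spaces are not products, making it globally well defined, $\Sk_r$-equivariant, transverse to the corner structure of $X_r$, and genuinely contracting in all transverse directions --- toward either a standard code-sequence configuration or $X_r^-$ --- is where all the work lies; this is the analysis performed in \cite[Section~3]{Jules1}. A secondary subtlety, specific to the noncompact Borel--Moore setting, is to rule out differentials in the Morse complex coming from incidences ``at infinity'' between equal-dimensional strata; this again reduces to the local interval model above. An alternative route, which one could pursue to avoid some of this, is to apply Poincaré--Lefschetz duality from the outset and instead compute the ordinary cohomology $H^r(X_r, \partial_+ X_r; \Laurent)$, where $\partial_+ X_r$ is the part of the boundary complementary to a collar of $X_r^-$.
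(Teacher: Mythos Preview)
The paper does not prove this proposition; it is quoted verbatim from \cite[Proposition~3.6]{Jules1} and used as a black box. Your proposal is therefore not being compared against an in-paper argument but against the cited reference, and your outline is a faithful sketch of the standard method for such statements: a Fox--Neuwirth--type stratification of $X_r$ (equivalently a Morse/handle decomposition) in which the cells surviving the quotient by $X_r^-$ are exactly the open products $\Delta^{k_0}\times\cdots\times\Delta^{k_{n-1}}$ of simplices along the dashed arcs, each of real dimension $r$ in a $2r$-manifold, so that the Borel--Moore complex with $\Laurent$-coefficients is concentrated in degree $r$ with one free generator per code sequence. This is indeed the mechanism in \cite{Jules1}, where the retraction onto the union of arcs and the local open-simplex model you single out are the core ingredients; your identification $\Hrelm_\bullet \simeq \Hlf_\bullet(X_r\setminus X_r^-;\Laurent)$ is also the one used there (and is restated in the present paper as the isomorphism $\Hrelm_r\simeq \Hlf_r(X_r(w_0);\Laurent)$ cited from \cite[Corollary~3.8]{Jules1} just after Theorem~\ref{JonesisLefschetz}).

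Two small cautions. First, your passage from ``all handles have index $r$'' to ``the complex has no differentials'' is automatic only because there is a single nonzero degree; you do acknowledge the possible ``incidences at infinity'' issue, and you are right that it dissolves once the local interval model is checked, but in a full write-up this deserves an explicit sentence rather than a parenthetical. Second, $X_r^-$ sits in $\partial X_r$, so the identification with the Borel--Moore homology of the complement is really a statement about a manifold with corners; this is harmless but should be said. Your alternative via Poincar\'e--Lefschetz duality and ordinary cohomology of $(X_r,\partial_+X_r)$ is exactly the dual picture exploited in Section~\ref{SectionIntersection} of the present paper, so it is a legitimate route as well.
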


\subsubsection{Action of the braid groups}

\begin{defn}
The braid group on $n$ strands is the mapping class group of $D_n$.
\[
\Bn = \Mod(D_n) = \Homeo(D_n, \partial D) \big/ {\Homeo}_0(D_n, \partial D).
\]
\end{defn}

\begin{rmk}\label{halfDehntwist}
This definition is isomorphic to the Artin presentation of the braid group (Definition \ref{Artinpres}) by sending generator $\sigma_i$ to the mapping class of the half Dehn twist swapping punctures $w_i$ and $w_{i+1}$. %The {\em pure braid group} $\PBn$ is defined to be braids fixing the punctures pointwise. 
\end{rmk}

\begin{lemma}[Lawrence representations, {\cite[Lemma~6.33]{Jules1}}]\label{Lawrencerep}
For all $r,n \in \BN$, the modules $\Hrelm_r$ are endowed with an action of the braid group $\Bn$. 
\end{lemma}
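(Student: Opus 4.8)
The statement to prove is Lemma~\ref{Lawrencerep}: for all $r,n \in \BN$, the modules $\Hrelm_r$ carry an action of the braid group $\Bn$. The plan is to let $\Bn$ act via its description as the mapping class group $\Mod(D_n)$ (as just recalled in the preceding definition and in Remark~\ref{halfDehntwist}) and to check that this action is well defined on the homology of the pair with local coefficients.

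First I would fix a homeomorphism $\varphi$ of $D_n$ fixing $\partial D$ pointwise, representing a given mapping class. The complement of the punctures is preserved, so $\varphi$ induces a self-homeomorphism of $D_n$, hence of the configuration space $\Conf_r(D_n)$ (acting diagonally on the $r$ coordinates), and this commutes with the $\Sk_r$-action, so it descends to a homeomorphism $\varphi_*$ of $X_r = X_r(w_1,\dots,w_n)$. Because $\varphi$ fixes the boundary, and in particular fixes $w_0 = -1 \in \partial D_n$, the homeomorphism $\varphi_*$ preserves the subspace $X_r^-$ of configurations meeting $w_0$; thus $(\varphi_*, \varphi_*)$ is an endomorphism of the pair $(X_r, X_r^-)$, inducing a map on $\Hlf_\bullet(X_r, X_r^-; -)$ once we handle coefficients. (Here I would note that $\varphi_*$ is proper — being a homeomorphism — so it acts on locally finite / Borel--Moore homology; this uses the Borel--Moore description recalled after Definition~\ref{defofH}.)

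Next comes the local-system bookkeeping, which is the main obstacle. The local system $L_r$ is pulled back along $\rho_r\colon \BZ[\pi_1(X_r,{\pmb \xi}^r)] \to \Laurent$, and to get an action on $\Hrelm_r$ with \emph{these fixed} $\Laurent$-coefficients I must check that $\varphi_*$ preserves $L_r$, i.e.\ that $\rho_r \circ (\varphi_*)_\# = \rho_r$ on $\pi_1(X_r)$ (after the canonical identification of fundamental groups coming from a path to $\varphi_*({\pmb \xi}^r)$, since $\varphi_*$ need not fix the base point exactly). Using the generators $\sigma_1,\dots,\sigma_{r-1}, B_{r,1},\dots,B_{r,n}$ of $\pi_1(X_r)$ from Remark~\ref{pi_1X_r}: the classes $\sigma_i$ (two configuration points orbiting each other, away from the punctures) are sent by $\rho_r$ to $t$, and $\varphi_*$ carries such a loop to another loop of the same type, hence again to $t$; the classes $B_{r,k}$ (a configuration point looping around puncture $w_k$) get permuted up to conjugation by $\varphi$ according to the induced permutation of punctures, and since $\rho_r(B_{r,k}) = s^2$ is independent of $k$ and central, the value is preserved. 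Therefore $\varphi_*$ lifts to the cover $\widehat{X_r}$ associated with $\ker \rho_r$, compatibly with the deck action of $\Laurent$, and induces an $\Laurent$-linear endomorphism of $\Hrelm_r = \Hlf_r(X_r, X_r^-;\Laurent)$.

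Finally I would check functoriality and that the action descends to $\Bn$: a composition of homeomorphisms induces the composition of the maps on $X_r$ and hence on $\Hrelm_r$ (reversing or not according to the chosen convention), and an isotopy of $\varphi$ rel $\partial D$ gives a homotopy of $\varphi_*$ through maps of pairs respecting the local system, hence the induced map on $\Hrelm_r$ depends only on the mapping class. This yields a homomorphism $\Bn = \Mod(D_n) \to \mathrm{Aut}_\Laurent(\Hrelm_r)$, which is the asserted action. The only genuinely delicate point is the local-coefficient compatibility above — making precise the identification of $\pi_1$'s under the base-point shift and verifying $\rho_r$ is $\varphi_*$-invariant on generators — while the properness needed for locally finite homology and the isotopy-invariance are routine; I would cite \cite[Section~3.1]{Jules1} and the Appendix of \cite{Jules1} for the homological formalism.
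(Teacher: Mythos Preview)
Your proposal is correct and follows essentially the same route as the paper's own (sketch of) proof: use the mapping-class-group description of $\Bn$, let a representing homeomorphism act diagonally on configurations to get a self-map of $(X_r,X_r^-)$, lift through the cover $\widehat{X_r}$, and conclude by isotopy invariance. The paper phrases this via the generating half Dehn twists $S_i$ rather than an arbitrary mapping class, and leaves the local-system compatibility and base-point bookkeeping implicit (deferring to \cite[Lemma~6.33]{Jules1}); your write-up supplies exactly those details, but there is no substantive difference in strategy.
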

\begin{proof}
It is Lawrence construction of braid groups representations \cite{Law}. See \cite[Lemma~6.33]{Jules1} for this precise Lemma. The representations are constructed as follows (sketch of proof).
\begin{itemize}
\item Let $S_i$ be the Dehn twist associated with the standard Artin generator $\sigma_i$ of $\Bn$, for $i \in \lbrace 1, \ldots, n-1 \rbrace$ (see Remark \ref{halfDehntwist}).
\item The homeomorphism $S_i$ extends to $X_r$ coordinate by coordinate.
\item The action of $S_i$ on $X_r$ lifts to $\Hrelm_r$. 
\item Define the action of $\sigma_i$ on $\Hrelm_r$ to be that of $S_i$. It defines a representation of $\Bn$ on $\Hrelm_r$ by isotopy invariance of the homological action. 
\end{itemize}
\end{proof}

The above representations are often called Lawrence(-like) representations. 

\begin{defn}\label{RepBn}
We denote $\Rhom$ the Lawrence representation of the braid group $\Bn$ on $\Hrelm_r$.
\[
\Rhom : \Laurent \left[ \Bn \right] \to \End_{\Laurent} \left( \Hrelm_r \right)
\]
\end{defn}

\subsection{Quantum representations from homology}\label{SummaryJules1}

In the two previous subsections we have introduced two families of representations for braid groups. In Section \ref{qVermaRep} the ones over tensor products of Verma modules, in Section \ref{HXrmodel} Lawrence ones upon homologies of configuration spaces. It turns out that they are related by the following theorem.

\begin{theorem}[{\cite[Theorem~2,~3]{Jules1}}]\label{Jules1Theorems}
There exists an isomorphism of $\Laurent$-modules:
\[
\CH:= \bigoplus_{r\in \BN} \Hrelm_r \to \left({V^{\alpha}}\right)^{\otimes n} = \bigoplus_{r \in \BN} V_{n,r}
%A(k_0, \ldots ,k_{n-1}) & \mapsto & v_{k_0} \otimes \cdots \otimes v_{k_{n-1}} \text{ ( } \forall (k_0, \ldots ,k_{n-1}) \in \BN^n \text{ ) }
\]
respecting the grading and which is also an isomorphism of $\Bn$ representations. 
\end{theorem}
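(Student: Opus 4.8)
The plan is to establish the isomorphism in two stages, building on the structural results already recorded: Proposition~\ref{homologystructure} gives the basis $\CU$ of code sequences for $\Hrelm_r$ over $\Laurent$, and the weight-space decomposition of Section~\ref{VermaBraiding} gives a natural basis of $V_{n,r}$ indexed by $n$-tuples $(k_0,\ldots,k_{n-1})$ with $\sum k_i = r$ (the vectors $F^{(k_0)}v_0 \otimes \cdots$, suitably ordered, that span $\ker(K - s^n q^{-2r}\Id)$). First I would define $\CH$ on each graded piece by sending the code sequence $U(k_0,\ldots,k_{n-1})$ to the corresponding tensor-product weight vector, up to an explicit normalization by a unit of $\Laurent$ (a monomial in $q,s$ together with quantum-integer factors coming from the $F^{(n)}$ action in Definition~\ref{GoodVerma}); since both sides are free $\Laurent$-modules with bases in bijection, this is manifestly a grading-preserving $\Laurent$-module isomorphism. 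The real content is that, after the correct choice of normalization, this map intertwines the two braid-group actions: the Lawrence action $\Rhom$ of $\sigma_i$ (Definition~\ref{RepBn}) and the quantum action $Q(\sigma_i) = 1^{\otimes i-1}\otimes \RR \otimes 1^{\otimes n-i-2}$ (Proposition~\ref{GoodBnrep}).

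The second stage is therefore to check this intertwining on generators. Since $\Bn$ is generated by $\sigma_1,\ldots,\sigma_{n-1}$ (Definition~\ref{Artinpres}), and since $\CH$ respects the $\BN$-grading while each $\sigma_i$ preserves the weight grading on both sides, it suffices to compare the matrices of $\Rhom(\sigma_i)$ and $Q(\sigma_i)$ in the two bases on a fixed graded piece $\Hrelm_r \cong V_{n,r}$. On the homological side, the effect of the Dehn twist $S_i$ on a code sequence diagram $U(k_0,\ldots,k_{n-1})$ can be read off by pushing the dashed arcs through the half-twist swapping $w_i$ and $w_{i+1}$ and then resolving the resulting picture as a $\Laurent$-linear combination of standard code sequences, tracking the local coefficient contributions ($t$ from arc crossings, $s^2 = q^{2\alpha}$ from loops around punctures) via $\rho_r$. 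On the quantum side, $\RR = q^{-\alpha\alpha'/2}\, T\circ R$ acts on $V^s \otimes V^s$ by the explicit formula for $R$ in Definition~\ref{goodRmatrix}, and the braiding coefficients on weight vectors are a standard finite sum involving $q^{H\otimes H/2}$ and the $E^n \otimes F^{(n)}$ terms. The claim reduces to the assertion that these two families of structure constants agree after the chosen normalization — a check that is local (it only involves the punctures $w_i, w_{i+1}$ and the points of the configuration sitting between and around them, so it reduces to the case $n=2$ by a locality/naturality argument) and essentially combinatorial.

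For the $n=2$ base case, I would compute both sides by hand: the homological action of the single generator $\sigma_1$ on $\Hrelm_r(w_1,w_2)$ in the code-sequence basis $\{U(k,r-k)\}_{k=0}^r$, and the matrix of $\RR$ on the weight-$r$ subspace of $V^s \otimes V^s$ in the basis $\{v_i \otimes v_{r-i}\}$, and verify they are conjugate by the diagonal normalization matrix. This is where one pins down the precise monomial/quantum-factor normalization so that the two eigenvalue patterns and off-diagonal entries line up; in particular the "$-$" decoration on the boundary cycles and the relative quotient by $X_r^-$ are what make the homological module finite-rank and match $V_{n,r}$ rather than an infinite Verma-type module.

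The main obstacle I expect is precisely this matching of normalizations in the $n=2$ computation: getting the half-twist's action on the handles (the red paths joining the base point to the arcs) correct, so that the powers of $s$ and $t$ coming from $\rho_r$ reproduce exactly the $q^{H\otimes H/2}$ prefactor and the quantum-binomial coefficients appearing in the $F^{(n)}$-action and in $R$. Everything else — freeness of both modules, bijection of bases, reduction to generators, reduction to $n=2$ by locality — is formal. Since this matching is carried out in full in \cite{Jules1} (Theorems~2 and~3 there), the proof here can legitimately be organized as: recall the bases, define $\CH$ by the normalized bijection, and invoke \cite{Jules1} for the intertwining, indicating the $n=2$ local computation as the crux.
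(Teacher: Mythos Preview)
The paper does not actually prove this theorem: it is stated with a citation to \cite[Theorems~2,~3]{Jules1} and no argument is given here. So your final paragraph---set up the bijection of bases and invoke \cite{Jules1} for the intertwining---already matches exactly what the present paper does.

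Where your sketch diverges from the approach of \cite{Jules1}, as indicated by the sentence immediately following the theorem, is in \emph{how} the braid intertwining is obtained. The remark after the statement says that \cite{Jules1} first endows $\bigoplus_r \Hrelm_r$ with a $\UqhL$-module structure (operators $E$, $K$, $F^{(n)}$ defined homologically in \cite[Section~6.1]{Jules1}) and shows that $\CH$ is a $\UqhL$-module isomorphism; the $\Bn$-equivariance then follows because the $R$-matrix of Definition~\ref{goodRmatrix} is written entirely in terms of these $\UqhL$ operators. Your proposal instead bypasses the quantum-group action and checks the $\sigma_i$-matrices directly, reducing by locality to an explicit $n=2$ computation for every $r$. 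That route is in principle viable and you correctly isolate the crux (matching the monomial normalizations so that the local-system contributions reproduce $q^{H\otimes H/2}$ and the quantum binomials), but it is combinatorially heavier: without the $\UqhL$ operators organizing the computation, you must match an $(r+1)\times(r+1)$ matrix for each $r$ rather than checking a handful of generator identities once. The $\UqhL$-structure approach also explains \emph{why} the normalization exists, whereas in your scheme it has to be discovered.

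One small correction: the natural basis of $V_{n,r}$ you want is $\{v_{k_0}\otimes\cdots\otimes v_{k_{n-1}} : \sum k_i=r\}$, not the $F^{(k_0)}v_0\otimes\cdots$ vectors (these agree up to scalars, but the former is what matches the code sequences directly).
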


Theorem \cite[Theorem~2]{Jules1} also shows that the above isomorphism preserves the action of $\UqhL$ defines on tensor products of Verma modules by the coproduct and on the homological side in \cite[Section~6.1]{Jules1}. This isomorphism is used to compute Jones polynomial from homologies of configuration spaces in next section.

\section{Colored Jones polynomials and Lefschetz numbers}\label{Sectionofmaintheorem}

The purpose of this section is to state and prove Theorem \ref{JonesisLefschetz}. It is Theorem \ref{maintheoremIntro} from the intro, it gives a formula relating abelianized Lefschetz numbers and colored Jones polynomials. In Section \ref{surveyLefschetz} we give a brief survey on the generalized Lefschetz number theory, involving its relations with Nielsen fixed point theory. In Section \ref{cJonesSetup} we define the family of colored Jones polynomials for knots from finite dimensional and simple representation of the algebra $\Uq$. We then show how to compute these knot invariants out of Verma modules instead. In Section \ref{subsectionofmaintheorem} we state and prove Theorem \ref{JonesisLefschetz} relating the two notions introduced: abelianized Lefschetz numbers and colored Jones polynomials. 

\subsection{Generalized Lefschetz numbers}\label{surveyLefschetz}

We make recalls about the {\em generalized Lefschetz number} (first introduced in \cite{Hu}), the {\em abelianized Nielsen number} (\cite[Section~1.(B)]{HJ}) and conclude with the {\em mod $K$ Nielsen numbers} (for $K$ a normal subgroup of the fundamental group) (\cite[Chapter~III.2]{J} or \cite[Part~1]{Nielsentheory}).

We follow \cite[Section~1]{HJ} for the definitions concerning generalized and abelianized Nielsen theories. Let $X$ be a compact connected topological space admitting a universal covering space, and $f$ be a homeomorphism of $X$. The fixed point set $Fix(f) = \lbrace x\in X \text{ s.t. } f(x) = x \rbrace$ splits into the union of {\em Nielsen fixed point classes}.

\begin{defn}
Two fixed points $x$  and $y$ of $f$ are in the same Nielsen class if there exists a path $\lambda$ joining them and such that $\lambda$ is homotopic (rel. endpoints) to its image under $f$. Nielsen classes are isolated so that their fixed point indexes are well defined. 
\end{defn}

Suppose $f$ fixes the base point $x_0$ of $X$ so that we can define $f_{\pi}$ to be the action of $f$ on $\pi_1(X)$ (otherwise the choice of a path $w$ from $x_0$ to its $f$-image is necessary to define $f_{\pi}$). 

\begin{defn}
We say that $\alpha, \beta \in \pi_1(X)$ are in the same $f$-Reidemeister class if there exists $\gamma \in \pi_1(X)$ such that $\alpha = f_{\pi}(\gamma) \beta \gamma^{-1}$. We call $\pi_{R}$ the group of such {\em $f$-conjugacy classes} and $\BZ \pi_{R}$ its group ring. 

Let $x$ be a fixed point and $c$ be a path from $x_0$ to $x$. The Reidemeister class of the loop $(f \circ c ) c^{-1}$ is independent of the choice of $c$ and called the coordinate of $x$. 
\end{defn}

\begin{prop}[{\cite[Section~1.(A)]{HJ}}]
Two fixed points are in the same Nielsen class iff they have the same coordinates.  The involved Reidemeister class is thus the {\em Nielsen class coordinate}. 
\end{prop}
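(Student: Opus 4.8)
The plan is to transport everything into a computation with based loops. For a path $c$ from the base point $x_0$ to a fixed point $x$, write $\ell_c:=(f\circ c)\ast c^{-1}$, where $\ast$ denotes concatenation (first factor traversed first); since $f(x_0)=x_0$ and $f(x)=x$ this is a loop at $x_0$, and the coordinate of $x$ is its class in $\pi_R$. I will freely use that homotopy classes rel endpoints of paths $x_0\to x$ form a torsor under $\pi_1(X,x_0)$. The first step I would carry out is the independence of $[\ell_c]$ on the choice of $c$, since the computation recurs. If $c'$ is another path $x_0\to x$, put $\gamma:=[c'\ast c^{-1}]\in\pi_1(X,x_0)$, so $c'\simeq\gamma\ast c$ rel endpoints; expanding $\ell_{c'}\simeq(f\circ\gamma)\ast(f\circ c)\ast c^{-1}\ast\gamma^{-1}$ and cancelling gives $[\ell_{c'}]=f_\pi(\gamma)\,[\ell_c]\,\gamma^{-1}$. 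Hence $[\ell_{c'}]$ and $[\ell_c]$ are $f$-Reidemeister equivalent, so the coordinate is well defined in $\pi_R$; moreover, as $\gamma$ ranges over $\pi_1(X,x_0)$ and $c'\simeq\gamma\ast c$ varies accordingly, every $f$-Reidemeister representative $f_\pi(\gamma)[\ell_c]\gamma^{-1}$ is realised by some path from $x_0$ to $x$.

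Next I would prove that being in the same Nielsen class implies having the same coordinate. Given a path $\lambda$ from $x$ to $y$ with $f\circ\lambda\simeq\lambda$ rel endpoints, fix any $c:x_0\to x$ and compute the coordinate of $y$ along $d:=c\ast\lambda$: because $f\circ d=(f\circ c)\ast(f\circ\lambda)\simeq(f\circ c)\ast\lambda$, one gets $\ell_d\simeq(f\circ c)\ast\lambda\ast\lambda^{-1}\ast c^{-1}\simeq\ell_c$, so $x$ and $y$ have the common coordinate $[\ell_c]$.

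For the converse, suppose $[\ell_c]$ and $[\ell_d]$ (for arbitrary $c:x_0\to x$, $d:x_0\to y$) satisfy $[\ell_c]=f_\pi(\gamma_0)\,[\ell_d]\,\gamma_0^{-1}$. Using the last remark of the first step, I replace $c$ by a path $c'$ with $[c'\ast c^{-1}]=\gamma_0^{-1}$; by the formula above this forces $[\ell_{c'}]=f_\pi(\gamma_0^{-1})f_\pi(\gamma_0)[\ell_d]\gamma_0^{-1}\gamma_0=[\ell_d]$ in $\pi_1(X,x_0)$ exactly, so after relabelling we may assume $(f\circ c)\ast c^{-1}\simeq(f\circ d)\ast d^{-1}$ rel endpoints, equivalently $c\ast(f\circ c)^{-1}\simeq d\ast(f\circ d)^{-1}$. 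Now set $\lambda:=c^{-1}\ast d$, a path from $x$ to $y$, and note $f\circ\lambda=(f\circ c)^{-1}\ast(f\circ d)$. Pre-concatenating with $c$ and using that relation, $c\ast(f\circ\lambda)=c\ast(f\circ c)^{-1}\ast(f\circ d)\simeq d\ast(f\circ d)^{-1}\ast(f\circ d)\simeq d\simeq c\ast\lambda$ rel endpoints, and since pre-concatenation with $c$ is a bijection on homotopy-rel-endpoints classes this yields $f\circ\lambda\simeq\lambda$; hence $x$ and $y$ lie in the same Nielsen class. Combining both implications shows that every Nielsen class has a single well-defined coordinate in $\pi_R$ and that distinct classes have distinct coordinates, which is the last sentence of the statement.

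The steps are individually routine, so I expect the only genuine obstacle to be the bookkeeping: keeping track of endpoints and of the order of concatenation, and in particular arranging the inverse/side conventions in the definition of $f$-Reidemeister equivalence so that the conjugating element produced in the independence computation is exactly the one needed to absorb $\gamma_0$ in the converse (this is what dictates the choice $[c'\ast c^{-1}]=\gamma_0^{-1}$ rather than $\gamma_0$). Finally, if $f$ does not fix $x_0$ the same argument applies verbatim once an auxiliary path $w$ from $x_0$ to $f(x_0)$ is fixed and $f\circ c$ is replaced by $w\ast(f\circ c)$ throughout, consistently with the parenthetical remark preceding the statement.
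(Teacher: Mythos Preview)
Your argument is correct and is exactly the standard proof of this classical fact from Nielsen theory. Note that the paper does not supply its own proof: the proposition is stated with a citation to \cite[Section~1.(A)]{HJ} and left without a proof environment, as it is background material in the survey Section~\ref{surveyLefschetz}. Your write-up is therefore more detailed than anything in the paper itself; the only cosmetic point is that you might streamline the converse by noting directly that once $[\ell_c]=[\ell_d]$ in $\pi_1$, the path $\lambda=c^{-1}\ast d$ automatically satisfies $f\circ\lambda\simeq\lambda$, which is precisely what you showed.
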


\begin{defn}
The {\em generalized Lefschetz number} of $f$ is defined to be:
\[
\CL_{R} (f) = \sum_{\left[ \alpha \right] \in \pi_R} i_{\left[ \alpha \right]  }\left[ \alpha \right] \in \BZ \pi_{R}
\]
where $\left[ \alpha \right]$ is a class in $\pi_R$ and $i_{\left[ \alpha \right]  }$ the index of fixed points in the corresponding Nielsen class.
\end{defn}

The following fact makes the link with the traditional notions of Lefschetz numbers and Nielsen numbers and can be taken as definition for these objects in the present paper. 

\begin{defn}
The usual Nielsen number and Lefschetz number are the following numbers:
\begin{align*}
N(f) & = \sharp \lbrace \left[ \alpha \right] \in \pi_R \text{ s.t. } i_{\left[ \alpha \right]} \neq 0 \rbrace \\
L(f) & = \sum_{\left[ \alpha \right] \in \pi_R} i_{\left[ \alpha \right]} .
\end{align*}
\end{defn}

\begin{theorem}[{\cite[Theorem~1.13]{Hu}},{\cite[Section~1.(A)]{HJ}}]
Suppose $X$ is a CW complex. A cellular decomposition of $X$ lifts to cells of $\widetilde{X}$, its universal covering space. These cells constitute a $\BZ \pi_1(X)$ basis of the cellular chain complex of $\widetilde{X}$. Let $f$ be a chain map and $\widetilde{f}$ be a lift of $f$ to $\widetilde{X}$ (considering a lift of the base point). Then:
\[
\CL_R(f) = \sum_{q} (-1)^q \left[ \Tr(\widetilde{f}, C_q(\widetilde{X}, \BZ \pi_{R}) ) \right]_R \in \BZ \pi_{R}.
\]
where $\left[ \Tr(\widetilde{f}, C_q(\widetilde{X}, \BZ \pi_{R}) ) \right]_R$ is the $\pi_R$-class of the trace of the action of $\widetilde{f}$ over the $\BZ \pi_1(X)$-module $C_q(\widetilde{X}, \BZ \pi_{R})$.
\end{theorem}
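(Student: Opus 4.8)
The plan is to deduce this from the classical Lefschetz--Hopf fixed point theorem for compact CW complexes, treating one Reidemeister class at a time and keeping track of the deck transformation group $\pi_1(X)$.

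First I would check that the right-hand side is well defined, i.e. that $\left[\Tr(\widetilde f, C_q(\widetilde X, \BZ\pi_{R}))\right]_R \in \BZ\pi_R$ does not depend on the chosen lifts of the cells of $X$ to $\widetilde X$. If $\{\tilde e^{\,q}_j\}$ is a $\BZ\pi_1(X)$-basis of $C_q(\widetilde X)$ obtained by lifting the cells of $X$ and $\widetilde f(\tilde e^{\,q}_j)=\sum_k a^{\,q}_{jk}\tilde e^{\,q}_k$ with $a^{\,q}_{jk}\in\BZ\pi_1(X)$, then replacing $\tilde e^{\,q}_j$ by $\gamma_j\tilde e^{\,q}_j$ turns $a^{\,q}_{jj}$ into $f_\pi(\gamma_j)\,a^{\,q}_{jj}\,\gamma_j^{-1}$, using the equivariance $\widetilde f(\gamma x)=f_\pi(\gamma)\widetilde f(x)$; this is precisely an $f_\pi$-twisted conjugation, so the image of $\sum_j a^{\,q}_{jj}$ in $\BZ\pi_R$ is unchanged. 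The same computation shows that applying the augmentation $\aug\colon\BZ\pi_1(X)\to\BZ$ to $\sum_j a^{\,q}_{jj}$ recovers $\Tr(f_\#\colon C_q(X)\to C_q(X))$, so that applying $\aug$ to the claimed identity reduces it to the ordinary Hopf trace formula $L(f)=\sum_q(-1)^q\Tr(f_\#|_{C_q(X)})$ --- a consistency check, but not the theorem, which requires the statement coefficient by coefficient over $\pi_R$.

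Next, fix a Reidemeister class $c\in\pi_R$ with representative $\gamma_0\in\pi_1(X)$, and let $N_c\subseteq\mathrm{Fix}(f)$ be the Nielsen class with coordinate $c$ (empty if there is none). I would invoke the standard covering-space dictionary: the lift $\widetilde f_0:=\gamma_0^{-1}\widetilde f$ satisfies $p(\mathrm{Fix}(\widetilde f_0))=N_c$, the covering projection $p\colon\widetilde X\to X$ restricts to a homeomorphism from a neighborhood of $\mathrm{Fix}(\widetilde f_0)$ onto a neighborhood of $N_c$, distinct deck translates $\gamma\widetilde f_0$ ($\gamma\neq 1$) have fixed sets projecting to other Nielsen classes or to the empty set, and $\bigsqcup_\gamma\mathrm{Fix}(\gamma\widetilde f_0)=p^{-1}(\mathrm{Fix}\,f)$. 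Hence $i_c=\mathrm{ind}(f,N_c)$ equals the fixed-point index of $\widetilde f_0$ on $\widetilde X$, while the coefficient of $c$ in $\sum_q(-1)^q\left[\Tr(\widetilde f,C_q(\widetilde X,\BZ\pi_R))\right]_R$ equals $\sum_q(-1)^q$ times the coefficient of $1\in\pi_1(X)$ in the $\BZ\pi_1(X)$-trace of $\widetilde f_0$ on $C_q(\widetilde X)$: passing from $\gamma_0^{-1}\widetilde f$ to $\widetilde f$ converts the $\gamma_0$-coefficient into the $1$-coefficient, and summing over the Reidemeister class of $\gamma_0$ matches the definition of the $R$-class. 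So it remains to prove, for the single lift $\widetilde f_0$, that the alternating sum of the $1$-coefficients of its chain-level $\BZ\pi_1(X)$-traces equals the fixed-point index of $\widetilde f_0$, and then sum the resulting equalities over all $c\in\pi_R$.

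This last statement is where the non-compactness of $\widetilde X$ is the main obstacle, and I would handle it by localizing the Lefschetz--Hopf theorem: the index is additive and supported near the compact set $\mathrm{Fix}(\widetilde f_0)$, so choosing a finite subcomplex $A\subset\widetilde X$ whose interior contains $\mathrm{Fix}(\widetilde f_0)$ and meets its $\widetilde f_0$-image only there, one compares $\widetilde f_0$ with the induced self-map of $A/\partial A$ (or argues directly by excision), applies the compact Lefschetz--Hopf theorem, and observes that the alternating cellular trace over $A$ is exactly the alternating sum of the $1$-coefficients of the $\BZ\pi_1(X)$-traces of $\widetilde f_0$ over $C_\bullet(\widetilde X)$, since a cell $\tilde e$ of $\widetilde X$ contributes to the $1$-coefficient only through the $\tilde e$-term of $\widetilde f_0(\tilde e)$. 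The work lies in checking that this localization is compatible both with the excision/additivity properties of the fixed-point index and with the extraction of the $1$-coefficient of the equivariant chain trace; granting that, repackaging via the twisted-conjugation equivalence established in the first step yields $\CL_R(f)=\sum_q(-1)^q\left[\Tr(\widetilde f,C_q(\widetilde X,\BZ\pi_R))\right]_R$.
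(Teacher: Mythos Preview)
The paper does not actually prove this theorem: it is quoted from \cite{Hu} and \cite{HJ}, and the only comment is that ``the proof of the above theorem relies on the lifting property of maps to the universal cover. See a sketch of proof in \cite[Theorem~2.2]{Ha}.'' So there is no in-paper argument to compare against; your outline is already far more detailed than anything the paper offers, and it follows the standard route (lifts of $f$ to $\widetilde X$ are indexed by $\pi_1$, their fixed sets project to Nielsen classes according to the Reidemeister coordinate, and one localizes Lefschetz--Hopf near a compact fixed set).

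That said, there is a genuine bookkeeping slip in your reduction from step~2 to step~3. You identify ``the alternating cellular trace over $A$'' with ``the alternating sum of the $1$-coefficients of the $\BZ\pi_1(X)$-traces of $\widetilde f_0$''. These are not the same object: the $1$-coefficient of the $\BZ\pi_1$-trace uses \emph{one} chosen lift $\tilde e_j$ per cell of $X$, whereas the $\BZ$-trace over $A\subset\widetilde X$ sums over \emph{all} cells of $\widetilde X$ lying in $A$, possibly several lifts of the same $e_j$. Relatedly, your claim that $p$ restricts to a homeomorphism from a neighborhood of $\mathrm{Fix}(\widetilde f_0)$ onto a neighborhood of $N_c$ fails whenever the twisted stabilizer $Z:=\{\delta\in\pi_1:\ \gamma_0^{-1}f_\pi(\delta)\gamma_0=\delta\}$ is nontrivial: each nontrivial $\delta\in Z$ commutes with $\widetilde f_0$, acts freely on $\widetilde X$, and preserves $\mathrm{Fix}(\widetilde f_0)$, so the projection $\mathrm{Fix}(\widetilde f_0)\to N_c$ is $|Z|$-to-$1$, not a bijection. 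Correspondingly, the coefficient of $c=[\gamma_0]$ in the $\pi_R$-trace is the sum of the $\gamma$-coefficients over \emph{all} $\gamma\sim_R\gamma_0$, not just the single $\gamma_0$-coefficient you extract. The repair is straightforward when $|Z|<\infty$: both the localized index and the localized $\BZ$-trace of $\widetilde f_0$ pick up the same factor $|Z|$, which cancels; when $|Z|=\infty$ one must argue differently (for instance via an intermediate cover, or by a direct cellular approximation argument as in \cite{Hu}). With that correction your scheme goes through.
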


The proof of the above theorem relies on the lifting property of maps to the universal cover. See a sketch of proof in \cite[Theorem~2.2]{Ha}. This proof can be adapted to Borel-Moore chain complexes over punctured manifold if $f$ is a proper map, as Borel-Moore corresponds to an inductive limit of chain complexes over compact topological spaces. 

One can adapt the theory of generalized Lefschetz numbers to other covering spaces having the good lifting properties. Indeed, the theory of generalized Lefschetz number corresponds to working with the universal covering space but has analogs working with other ones. For instance the maximal abelian cover cover gives rise to the notion of abelianized Nielsen number. 

\begin{defn}[{\cite[Section~1.(A)]{HJ}}]
Two fixed points are in the same {\em homological Nielsen class} if there exists a path $\lambda$ joining them and such that $\lambda$ is homologous (rel. endpoints) to its image under $f$. The {\em abelianized Nielsen number} $\CN^{ab}(f)$ is the number of homological Nielsen classes with non-zero index. 
\end{defn}

We consider the following morphism:
\[
\mu : H_1(X) \to H = \Coker \left(1-f_* \in \End(H_1(X)) \right) .
\]
The coordinate of a homological Nielsen class is the $H$-class of the loop $(f\circ c)c^{-1}$ where $c$ is a path from $x_0$ to a fixed point of the class. We recall that if $\widehat{X}$ is the maximal abelian cover of $X$ (associated with the Hurewicz representation: $\pi_1(X) \to H_1(X)$), the chain complex $C_{\bullet}(\widehat{X})$ is endowed with a $\BZ H_1(X)$-action. As for the Nielsen number, there is a trace formula computing the abelianized Nielsen number.

%%The coordinate of a homological Nielsen class is the element of H represented by
%%the loop w(f 0 c)c- 1 , where e is a path from Xo to a fixed point in the class. The
%%abelianized Reidemeister trace LH(f) is an element ofthe group-ring 7I..H, the coefficient
%%of h E H being the index of the homological Nielsen class with coordinate h. Thus
%%LH(I) = LR(f)Ab and, in the setting of cellular computation in (A),
%%r.
%%LH(f) = L(-l)qtrFH,q E 7I..H
%%q
%%where ·PH,q := p,Ptb are 71..H-matrices. 

\begin{prop}[{\cite[Section~1.(B)]{HJ}}]\label{abelianizedReidemeisterTrace}
The abelianized Reidemeister trace $\CL_H(f)$ is an element of $\BZ H$ such that the coefficient of $h \in H$ is the index of the homological class with coordinate $h$. Thus $\CL_H(f)=\CL_R(f)^{ab}$ (the generalized Lefschetz number abelianized), and it satisfies the following trace formula:
\[
\CL_H(f) = \sum_{q} (-1)^q \mu \left[ \Tr(\widetilde{f}, C_q(\widehat{X}, \BZ H_1(X)) ) \right] \in \BZ H.
\]
The number of non-zero terms is $\CN^{ab}(f)$.
\end{prop}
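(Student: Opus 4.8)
The plan is to deduce the abelianized statement from the generalized (universal-cover) Reidemeister trace theorem recalled just above, by pushing everything forward along the abelianization map $\pi_R \to H$ induced by the Hurewicz homomorphism $\pi_1(X)\to H_1(X)$. First I would make precise the map $\pi_R \to H$: an $f$-Reidemeister class $[\alpha]$ (where $\alpha \sim f_\pi(\gamma)\alpha\gamma^{-1}$) maps to the image of $\alpha$ in $H = \operatorname{Coker}(1-f_*)$, and this is well defined because abelianizing the relation $\alpha = f_\pi(\gamma)\beta\gamma^{-1}$ gives $\bar\alpha - \bar\beta = (f_* - 1)\bar\gamma$, which is zero in the cokernel. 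This identifies $H$ with the quotient of $\pi_R$ by the commutator-type relations and shows the linear extension $\BZ\pi_R \to \BZ H$ realizes the abelianization $\CL_R(f) \mapsto \CL_R(f)^{ab}$. Applying this map to the index-labelled sum $\CL_R(f) = \sum_{[\alpha]} i_{[\alpha]}[\alpha]$, the coefficient of a given $h\in H$ becomes $\sum_{[\alpha]\mapsto h} i_{[\alpha]}$, which I must then identify with the index of the single homological Nielsen class with coordinate $h$.

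For that identification I would use the comparison of the two equivalence relations on $\operatorname{Fix}(f)$: two fixed points lie in the same Nielsen class iff their coordinates agree in $\pi_R$, and in the same homological Nielsen class iff their coordinates agree in $H$; since the coordinate in $H$ is the image under $\pi_R\to H$ of the coordinate in $\pi_R$, each homological Nielsen class is a union of ordinary Nielsen classes, and its fixed-point index (indices being additive over disjoint unions of isolated fixed-point sets, and Nielsen classes being isolated) is the sum of the indices of the constituent Nielsen classes. This is exactly $\sum_{[\alpha]\mapsto h} i_{[\alpha]}$, so the coefficient of $h$ in $\CL_R(f)^{ab}$ is the index of the homological class with coordinate $h$; this is the asserted description of $\CL_H(f)$, and $\CN^{ab}(f)$ — the number of homological Nielsen classes of nonzero index — is then by definition the number of nonzero coefficients.

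It remains to obtain the trace formula. Here I would run the universal-cover trace theorem's argument one level down: a cellular decomposition of $X$ lifts to the maximal abelian cover $\widehat{X}$, whose cellular chains form a free $\BZ H_1(X)$-module with basis the lifted cells; a lift $\widehat{f}$ of $f$ (after choosing a lift of the base point) acts $\BZ H_1(X)$-semilinearly via $f_*$, so in each degree $q$ one gets a well-defined trace in $\BZ H_1(X)$ modulo the $f$-twisted conjugacy, and pushing it through $\mu\colon H_1(X)\to H$ kills exactly that ambiguity. Summing $(-1)^q$ over $q$ gives an element of $\BZ H$ which, by the same fixed-point-theoretic bookkeeping used in the universal-cover case (now carried out in $\widehat{X}$ rather than $\widetilde{X}$), equals $\sum_{[\alpha]} i_{[\alpha]}$ grouped by $H$-coordinate — i.e.\ it equals $\CL_H(f)$. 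Concretely this also follows formally from the universal-cover theorem by applying the ring map $\BZ\pi_R \to \BZ H$ to both sides, using that $C_q(\widehat{X};\BZ H_1(X)) = C_q(\widetilde{X};\BZ\pi_1(X))\otimes_{\BZ\pi_1(X)}\BZ H_1(X)$ so traces are compatible with the abelianization.

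The main obstacle I expect is \emph{not} the algebra of pushing the trace forward — that is essentially formal once the map $\pi_R\to H$ is set up — but rather justifying the index-additivity and the trace formula in the \emph{locally finite / Borel--Moore} setting on the noncompact space $X_r$ that this paper needs: the underlying fixed-point theory and the cellular trace theorem are classically stated for compact CW complexes, so one must invoke (as the remark after the universal-cover theorem already flags) that $X_r$ is, up to proper homotopy, controlled by a compact model and that $f=\widehat\beta^r$ is proper, so Borel--Moore chains are an inverse/direct limit of compactly-supported ones and all the additivity and trace identities pass to the limit. For the statement as phrased here (compact connected $X$ with universal cover), though, this subtlety does not arise and the proof is the straightforward abelianization argument sketched above.
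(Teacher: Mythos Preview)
The paper does not actually prove this proposition: it is quoted from \cite[Section~1.(B)]{HJ} and stated without argument, serving only as background for the later trace-formula computations. Your sketch is the standard way one would derive the abelianized statement from the generalized Lefschetz number theorem cited just before it (Husseini's theorem), namely by pushing $\CL_R(f)$ forward along $\BZ\pi_R \to \BZ H$, and it is correct.

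One small comment: in your second paragraph you use additivity of the fixed-point index over the ordinary Nielsen classes constituting a given homological Nielsen class. This is fine, but you should say explicitly that the ordinary Nielsen classes are already known to be isolated (this is part of the standard Nielsen theory recalled in the paper's Definition just above), so the homological class, being a finite disjoint union of these, is isolated and its index is well defined and additive. Your final paragraph anticipating the Borel--Moore / properness issue is apt and matches the remark the paper makes after the universal-cover theorem; for the proposition as stated (compact connected $X$) it is indeed not needed.
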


\begin{rmk}\label{lowerboundandlefschetz}
Although the abelianized Nielsen number is less refined than the Nielsen number, it enjoys the following properties:
\begin{itemize}
\item $\CN^{ab}(f) \leq N(f)$ .
\item $L(f) = \sum i_{\left[ \alpha \right]_H}$ where the terms in the sum are the indexes of homological classes associated with $\alpha$. 
\end{itemize}
\end{rmk}

One could work with a covering space corresponding to a normal subgroup of the $\pi_1$ and obtain the same kind of result. Namely, the abelianized Nielsen theory corresponds to the the $mod$ $K$ Nielsen theory (\cite[Chapter~III.2]{J}) for $K = \left[ \pi_1(X) , \pi_1(X) \right]$. We will precise the group $K$ of interest to us in the context of the following subsection.

\begin{theorem}[{\cite[Theorem~2.5]{J}}]
The Nielsen number, abelianized Nielsen number, and the $mod$ $K$ Nielsen numbers are homotopy invariants. 
\end{theorem}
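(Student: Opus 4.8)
\subsection*{Proof sketch}

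The plan is to reduce all three assertions to a single statement about traces. Each of the three quantities is the number of nonzero coefficients of a suitable Reidemeister-type trace: $N(f)$ is the number of nonzero coefficients of $\CL_R(f)\in\BZ\pi_R$, the abelianized Nielsen number $\CN^{ab}(f)$ is that of $\CL_H(f)\in\BZ H$ by Proposition~\ref{abelianizedReidemeisterTrace}, and the $\mathrm{mod}\ K$ Nielsen number is that of the analogous $\BZ$-module-valued trace built, exactly as in \cite[Chapter~III.2]{J}, from the covering $X_K\to X$ associated with $K\triangleleft\pi_1(X)$. So it suffices to show: if $f$ and $g$ are homotopic homeomorphisms of (a compact CW complex) $X$, then there is a bijection $\pi_R(f)\to\pi_R(g)$ carrying $\CL_R(f)$ to $\CL_R(g)$, and likewise for the $H$-valued and $\mathrm{mod}\ K$-valued traces. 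A bijection preserves the number of nonzero coefficients, and the three homotopy invariances follow at once.

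First I would normalise the base-point data so that these ``bookkeeping'' groups become identified. Fix a homotopy $H\colon X\times[0,1]\to X$ with $H_0=f$, $H_1=g$, and let $w$ be the path $t\mapsto H(x_0,t)$. If $w_f$ is the path from $x_0$ to $f(x_0)$ used to define $f_\pi$, set $w_g:=w_f\ast w$, a path from $x_0$ to $g(x_0)$; with these choices one checks $f_\pi=g_\pi$ as endomorphisms of $\pi_1(X,x_0)$, because for a loop $\gamma$ at $x_0$ the loops $f\circ\gamma$ and $g\circ\gamma$ are freely homotopic with base-point trace $w$. Hence $\pi_R(f)$ and $\pi_R(g)$ are literally the same group of ($f_\pi$-)twisted conjugacy classes; similarly $f_*=g_*$ on $H_1(X)$, so $H=\Coker(1-f_*)=\Coker(1-g_*)$, and the $\mathrm{mod}\ K$ quotient of $\pi_1(X)$ is unchanged as well. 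This produces the desired bijections (in fact equalities of target groups).

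Next I would invoke the homotopy lifting property of the covering $\widetilde X\to X$: a chosen lift $\widetilde f$ of $f$ extends to a lift $\widetilde H$ of $H$, and $\widetilde g:=\widetilde H_1$ is a lift of $g$. Cellularly, $\widetilde f_\#$ and $\widetilde g_\#$ are then $\BZ\pi_1(X)$-equivariantly chain homotopic, with respect to the module structures identified in the previous step. Writing $\widetilde g_\#-\widetilde f_\#=\partial D+D\partial$ and using that the Reidemeister trace $[\Tr(-)]_R$ is cyclic on $\BZ\pi_1(X)$-matrices precisely because one has passed to the twisted-conjugacy quotient $\pi_R$, a telescoping computation gives $\sum_q(-1)^q\big([\Tr(\partial D)_q]_R+[\Tr(D\partial)_q]_R\big)=0$, hence $\CL_R(f)=\CL_R(g)$ in $\BZ\pi_R$ after running over all lifts. (Equivalently, the geometric route of \cite[Theorem~1.13]{Hu}: $\widetilde H$ is a homotopy from $\widetilde f$ to $\widetilde g$, so homotopy invariance and additivity of the fixed point index give that corresponding Nielsen classes of $f$ and $g$ have equal index.) The abelianized and $\mathrm{mod}\ K$ statements are obtained by the same argument with the maximal abelian cover $\widehat X$, resp.\ the cover $X_K$, in place of $\widetilde X$, and using $\CL_H=(\CL_R)^{ab}$ from Proposition~\ref{abelianizedReidemeisterTrace}.

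I expect the main obstacle to be the technical core of the last two steps: checking that the base-point tracking genuinely identifies the twisted-conjugacy (resp.\ homological, $\mathrm{mod}\ K$) bookkeeping groups so that the lifted chain homotopy is honestly equivariant, and — in the geometric formulation — that the fixed point index is well defined and homotopy-invariant on the possibly non-compact covering space. The latter is handled by localising near the relevant fixed point classes, which are closed in the compact base $X$ and isolated from one another, and invoking the additivity and homotopy axioms of the index there. Everything else is a formal consequence of cyclicity of the Reidemeister trace (resp.\ of its abelianized or $\mathrm{mod}\ K$ versions).
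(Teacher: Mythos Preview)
The paper does not contain a proof of this statement at all: it is simply quoted as \cite[Theorem~2.5]{J} and used as background in the survey Section~\ref{surveyLefschetz}. There is therefore nothing in the paper to compare your argument against.

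That said, your sketch is a reasonable outline of the standard argument one finds in Jiang's book. The reduction to the homotopy invariance of the Reidemeister trace (and its abelianized and $\mathrm{mod}\ K$ quotients) is the right organizing principle, and your handling of the base-point bookkeeping via the track $w$ of the homotopy is the usual device. One small caution: you assert that with your choice of $w_g$ one has $f_\pi=g_\pi$ literally as endomorphisms of $\pi_1(X,x_0)$. In general $f_\pi$ and $g_\pi$ differ by an inner automorphism (conjugation by the class of the loop $w_f\ast w\ast (g\circ w_f)^{-1}\ast \overline{w_f}$, say), which is enough to give a canonical bijection $\pi_R(f)\cong\pi_R(g)$ but not an equality of endomorphisms on the nose. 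This does not affect the conclusion, since twisted conjugacy classes are insensitive to inner automorphisms, but you should state it that way. The rest of your sketch (lifted chain homotopy, cyclicity of the trace modulo twisted conjugacy, or alternatively the index-theoretic route via additivity and homotopy invariance of the fixed point index) is the correct content.
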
 

\subsection{Colored Jones polynomials}\label{cJonesSetup}

For defining colored Jones polynomials of knots, we need standard finite dimensional simple modules of $\Uq$. In this section, we recover the trace of the action of a braid on these finite dimensional modules inside that on Verma modules. We follow \cite[Section~5]{JK} to do so. This will allow us to compute the colored Jones polynomial from the action of braids on specific tensor products of Verma modules. Let $l\in \BN^*$ be an integer, and consider the specialization ring morphism $\aug^l : \BZ \left[ s^{\pm 1} , q^{\pm 1} \right] \to \BZ \left[ q^{\pm 1} \right]$ that sends $s \mapsto q^l$. Let $V^s$ be the $\UqhL$ Verma module defined in Definition \ref{GoodVerma}, spanned by vectors $\lbrace v_0 , v_1 , \ldots \rbrace$, and:
\[
\bfVl = V^s \otimes_{s=q^l} \BZ \left[ q^{\pm 1} \right] ,
\]
be the tensor product provided by morphism $\aug^l$.

\begin{rmk}[standard finite simple modules out of Verma modules, {\cite[Section~5]{JK}}]\label{truncatedVerma}
Let $S^l \in \bfVl$ be the submodule spanned by vectors $\lbrace v_0 , \ldots , v_l \rbrace$. This submodule is equipped with an action of generators $F^{(k)}$ that truncates as \cite[Relation~(47)]{JK}, which is straightforward from the formula in Definition \ref{GoodVerma}. Namely:
\begin{equation}\label{FtruncatesApplicationtoknots}
F^{(k)} v_j = 0 \text{ if } k+j > l .
\end{equation}
The latter ensures that $S^l \in \bfVl$ is also a submodule regarding the $\Uq$-structure. It is equivalent to the $(l+1)$ dimensional standard simple module (precisely defined in \cite[Section~VIII.3]{Kas}).

Let $n \in \BN^*$, from the latter together with Relations (48) and (49) of \cite{JK}, $({S^l})^{\otimes n} \subset {\bfVl}^{\otimes n}$ is a sub-representation of the braid group $\Bn$, see Definition \ref{GoodBnrep} for the definition of the representation. Namely, the braid group representations can be specialized and restricted to finite dimensional modules. Moreover the obtained representations of the braid group $\Bn$ over $({S^l})^{\otimes n}$ are equivalent to those obtained from the standard construction of the $(l+1)$-dimensional simple representations of $\Uq$. 
\end{rmk}

\begin{Not}
Let $l,n \in \BN^*$, We denote:
\[
V_{n,r} := \ker \left( K - q^{nl-2i} 1 \right) \in ({\bfVl})^{\otimes n},
\]
and $S_{n,l} := \bigoplus_{r=0}^{nl} V_{n,r}$ be the direct sum of the first $nl$ weight spaces. The latter is an $\Laurent$-module spanned by the following set:
\[
\CB_{S_{nl}} = \left\lbrace v_{i_1} \otimes \cdots \otimes v_{i_n} \text{ s.t. } \sum i_j \leq nl \right\rbrace 
\]
and is a sub representation of $\Bn$ (but not a $\Uq$-submodule), see Remark 5.13  from \cite{Jules1}.
Let $\CS^l = ({S^l})^{\otimes n}$, one checks that it is a subspace of $S_{n,l}$. It corresponds to the $\Laurent$-module spanned by the following set:
\[
\CB_{\CS^l} = \left\lbrace v_{i_1} \otimes \cdots \otimes v_{i_n} \in  S_{n,l}  \text{ s.t. } i_k \leq l, \forall k \right\rbrace
\]
and is a sub representation of $\Bn$ from previous remark (and a $\Uq$ submodule). 
%Let $\CS^l = ({S^l})^{\otimes n} \in ({\bfVl})^{\otimes n}$. It is the $\Laurent$-module spanned by the following set:
%\[
%\CB_{\CS^l} = \lbrace v_{i_1} \otimes \cdots \otimes v_{i_n} \in  S_{n,l}  \text{ s.t. } i_k \leq l, \forall k \rbrace
%\]
%and is a sub representation of $\Bn$ from previous remark. We let $\widehat{\CS^l} \in S_{n,nl,{\bfVl}}$ be the $\Laurent$-submodule spanned by:
%\[
%\CB_{\widehat{\CS^l}} = \lbrace v_{i_1} \otimes \cdots \otimes v_{i_n} \in  S_{n,nl,{\bfVl}}  \text{ s.t. } i_k > l, \forall k \rbrace.
%\]
\end{Not}

The following lemma is adapted from \cite[Lemma~1.3]{L-H}.

\begin{lemma}\label{LeHuynhargument}
Let $\beta \in \Bn$ be such that its closure is a knot. Let $Q(\beta)$ be its quantum representation over $\bfVl$ and given by the $\UqhL$ $R$-matrix (see Definition \ref{GoodBnrep}), then:
\[
\Tr(Q(\beta)K^{-1},S_{n,l}) = \Tr(Q(\beta)K^{-1},{\CS^l})
% + \Tr(p_0^{\otimes n} (Q(\beta) \circ K^{-1})  , \widehat{\CS^l}) 
\]
where $\Tr(Q(\beta),Z)$ means the trace of the action $Q(\beta)$ restricted to the $\Bn$-subrepresentation $Z$.
\end{lemma}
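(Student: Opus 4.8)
The plan is to reduce the statement to the vanishing, on the complementary summand $S_{n,l}\ominus \CS^l$, of the trace of $Q(\beta)K^{-1}$. Write $S_{n,l}=\CS^l\oplus T$ where, as an $\Laurent$-module, $T$ is spanned by the tensors $v_{i_1}\otimes\cdots\otimes v_{i_n}$ with $\sum i_j\le nl$ but with at least one index $i_k\ge l+1$. Since trace is additive over a direct-sum decomposition \emph{into subrepresentations}, and since both $\CS^l$ and $S_{n,l}$ are $\Bn$-subrepresentations (as recalled in the preceding Notations), it suffices to prove that the action of $Q(\beta)K^{-1}$ on the quotient $S_{n,l}/\CS^l\cong T$ has trace zero; equivalently, following \cite[Lemma~1.3]{L-H}, that $T$ carries a filtration by $\Bn$-subrepresentations whose graded pieces each contribute zero. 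So the first step is to exhibit such a filtration, and the structural input for it is exactly the truncation relation \eqref{FtruncatesApplicationtoknots} together with the explicit coproduct formula for $F^{(n)}$ in Remark~\ref{relationsUqhL} and the form of $\RR$ from Definition~\ref{GoodBnrep}.

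Concretely, I would stratify the spanning tensors of $S_{n,l}$ by recording, for each basis vector $v_{i_1}\otimes\cdots\otimes v_{i_n}$, the set of positions $k$ at which $i_k>l$ (call these the \emph{excited} positions). The claim is that $\RR$ applied to a two-factor tensor cannot \emph{decrease} the "excitation pattern" past what is forced: because $\RR=q^{-\alpha\alpha'/2}T\circ q^{H\otimes H/2}\sum_n q^{n(n-1)/2}E^n\otimes F^{(n)}$, and $E$ lowers an index while $F^{(n)}$ raises it, the diagonal part of the $R$-matrix action on a tensor with some factor in degree $>l$ again has a factor in degree $>l$, while the off-diagonal ($n\ge 1$) terms only move weight from the left factor ($E^n$) to the right ($F^{(n)}$) and, crucially, the truncation \eqref{FtruncatesApplicationtoknots} kills any term that would produce a factor $v_j$ with $j>l$ inside $\CS^l$; the remaining terms stay within the span of tensors having an excited position somewhere. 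This gives a $\Bn$-stable flag on $S_{n,l}$ with top quotient $\cong\CS^l$ and with every other graded piece spanned by vectors that genuinely have an excited factor. The braid-closure-is-a-knot hypothesis enters precisely as in \cite{L-H}: it forces the permutation underlying $\beta$ to be an $n$-cycle, so that $Q(\beta)K^{-1}$ permutes the tensor factors cyclically up to the $R$-matrix corrections, and hence on each non-top graded piece the matrix of $Q(\beta)K^{-1}$ has no fixed diagonal block — every basis tensor is sent into the span of tensors with a \emph{different} excitation pattern — whence trace zero on that piece.

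The main obstacle, and the part that needs genuine care rather than bookkeeping, is verifying that the "excitation pattern" really is preserved (not just weakly, but in a way compatible with the grading used to define the flag): the $R$-matrix is not simply a signed permutation on the basis $\CB_{S_{nl}}$, so one must check that in expanding $\RR(v_i\otimes v_j)$ the terms $v_{i-m}\otimes (F^{(m)}v_j)$ with $i-m$ dropping to $\le l$ are exactly compensated, or rather that they are organized so that the induced endomorphism on the associated graded of the flag is (up to the diagonal scalar and the cyclic permutation of factors) block-triangular with the desired top block. This is where one genuinely invokes \eqref{FtruncatesApplicationtoknots}: it is what makes $\CS^l$ a $\Uq$-subrepresentation in the first place, and a weighted version of the same computation — tracking which of the $n$ tensor slots can be "excited" after applying $\RR$ at an adjacent pair — is what closes the argument. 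Once the flag is in place, additivity of trace over it, combined with the $n$-cycle structure of $\beta$ forcing zero trace on each lower piece, yields $\Tr(Q(\beta)K^{-1},S_{n,l})=\Tr(Q(\beta)K^{-1},\CS^l)$. I would keep the $K^{-1}$ insertion throughout since it is diagonal and weight-preserving, hence commutes with the whole filtration argument and only rescales diagonal entries, never affecting vanishing of the off-diagonal-block traces.
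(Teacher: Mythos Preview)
Your approach is essentially the paper's argument dressed in filtration language, and the core mechanism is the same: the one--directional implication ``$i_k\le l$ for the input forces $s_{\tau(k)}\le l$ for the output'' (coming from the truncation \eqref{FtruncatesApplicationtoknots} and the explicit shape of $\RR$ and $\RR^{-1}$), combined with the fact that $\tau=\perm(\beta)$ is an $n$-cycle. The paper, however, does not build a flag at all: it simply observes that for a \emph{diagonal} matrix entry one has $i_k=s_k$, so the implication above becomes ``$i_k\le l\Rightarrow i_{\tau(k)}\le l$'', and iterating around the $n$-cycle forces \emph{all} $i_k\le l$ as soon as one is. Your filtration repackages this, but the direct diagonal--entry argument is shorter and avoids the bookkeeping you flag as the ``main obstacle'' (which is in fact nothing more than the single implication just stated, checked once for $\RR$ and once for $\RR^{-1}$; no graded compatibility beyond that is needed).

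There is one genuine gap in your write--up. You assert that on each non--top graded piece ``every basis tensor is sent into the span of tensors with a \emph{different} excitation pattern'' because $\tau$ is an $n$-cycle. But an $n$-cycle \emph{does} fix one nonempty excitation pattern: the full set $\{1,\ldots,n\}$. Your argument as written does not exclude it, and without excluding it the trace on that piece need not vanish. The missing observation --- which the paper makes explicitly and which is where the hypothesis $\sum_j i_j\le nl$ defining $S_{n,l}$ is actually used --- is that if every $i_k>l$ then $\sum_k i_k\ge n(l+1)>nl$, so the all--excited stratum is empty inside $S_{n,l}$. Once you insert this sentence, your argument closes; but it is not optional, and it is precisely the reason the lemma is stated for $S_{n,l}$ rather than for all of $({\bfVl})^{\otimes n}$. (Minor point: $\CS^l$ is a \emph{sub}representation, hence the bottom of your flag, not the top quotient; this does not affect the additivity--of--trace step but your description has the direction reversed.)
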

\begin{proof}
Let $\beta$ be a braid and $\tau = \perm (\beta)$. The fact that the closure of $\beta$ is a knot guarantees that $\tau$ is an $n$-cycle of $\Sk_n$ (it permutes all the punctures). Let $\left( Q(\beta) K^{-1} \right)_{i_1 , \ldots , i_n}^{s_1,\ldots , s_n}$ be the matrix of $Q(\beta) K^{-1}$ in the basis $\CB(\CS_{nl})$. 
\begin{rmk}\label{LeHuynhsonttropforts}
One has $i_k \leq l$ implies $s_{\tau (k)} \leq l$ for a term not to be $0$. 
\end{rmk}
The latter is due to: $F^{(k)} v_j = 0 $ whenever $k+j > l$, $E v_j = v_{j-1}$ and from the expression of the $R$-matrix and of its inverse that can be found in \cite[Section~1.1.2]{L-H}. These are the same reasons why $S^l$ is stable under the $\Bn$-action. As we want to compute the trace we are only concerned with entries such that $i_k = s_k$. 

Suppose one $i_k$ is less than $l$, then $s_{\tau (k)} \leq l$ (Remark \ref{LeHuynhsonttropforts}), so $i_{\tau (k)} \leq l$ (as we only treat diagonal terms), so that $s_{\tau^2 (k)} \leq l$ (Remark \ref{LeHuynhsonttropforts}) and so on. Finally, all the $i_{\tau^m(k)}$ for $m \in \BN$ must be lower than $l$. As the set $\lbrace \tau^m(1) , 1 \leq m \leq n \rbrace$ is the whole set $\lbrace 1 , \ldots , n \rbrace$, whenever one $i_k$ is lower than $l$ in a vector $v_{i_1} \otimes \cdots \otimes v_{i_n}$, all the $i_k$'s must be lower than $l$ for the corresponding diagonal term not to be $0$. Similarly whenever one $i_k$ is strictly greater than $l$ in a vector $v_{i_1} \otimes \cdots \otimes v_{i_n}$ , all the $i_k$'s must be strictly greater than $l$ for the corresponding diagonal term not to be $0$.

It remains two types of vectors for the diagonal term not to be zero. If all the $i_k$'s are strictly greater than $l$, then $\sum_k i_k \geq n(l+1) > nl$ and $v_{i_1} \otimes \cdots \otimes v_{i_n}$ is not in $S_{nl}$. The only vectors in $S_{nl}$ corresponding to non-zero diagonal terms are the ones such that $i_k \leq l$ for all $k$, namely the ones of $\CS^l$. This concludes the proof. 
%
%This proves the lemma. 
\end{proof}

The finite dimensional representations $\CS^l$ of the braid groups constructed above are the ones used to define the colored Jones polynomial. The following definition (with slightly different conventions, see Remark \ref{framingRemark}) is given at the beginning of Subsection 1.1.4 of \cite{L-H}.

\begin{defn}[Colored Jones polynomial]\label{coloredJonesDef}
Let $n,l \in \BN^*$. If the closure of an $n$-strand braid $\beta$ is the knot $K$, then the $(l+1)$-colored Jones polynomial of $K$ is the following:
\[
\Jones_K(l+1) =  q^{-w(\beta)}\Tr \left( Q(\beta) K^{-1} , \CS^l \right) %q^{w(\beta)\frac{(l+1)^2-1}{2}}
\]
where $w(\beta)$ is the writhe of the braid $\beta$, namely the sum of crossings' signs.  The function $\Tr \left( Q(\beta) K^{-1} , \CS^l \right)$ means the trace of the operator $Q(\beta) K^{-1}$ (see Definition \ref{GoodBnrep} for $Q$, while $K$ is  one of the $\Uq$ generator) restricted to $\CS^l$.
\end{defn}

\begin{rmk}\label{framingRemark}
In \cite{L-H}, the definition has a quadratic factor times the writhe in the exponent of $q$. This quadratic factor is already contain in the definition of our $\Bn$-representations. Namely, in Definition \ref{GoodBnrep}, one observes the multiplication of the $R$-matrix by $q^{-\alpha^2/2}$ explaining this difference in definitions.  
\end{rmk}

\subsection{Colored Jones polynomials compute Lefschetz numbers}\label{subsectionofmaintheorem}

\begin{defn}[Homological trace]
Let $X$ be a topological space and $R$ be a commutative ring such that $H:=H_{\bullet}(X,R)$, the homology associated with $X$ with coefficients in $R$, is a family of free $R$-modules. Let $f$ be a homeomorphism of $X$. The {\em $H$-Lefschetz number} of $f$ is defined to be the following number:
\[
\CL\left( f,H \right) = \sum_i (-1)^i \Tr \left( f_* , H_i \left( X,R \right) \right) \in R
\]
where $\Tr \left( f_* , H_i \left( X,R \right) \right)$ means the trace of the action of $f$ over the $i^{th}$ homology module of $H$. 
\end{defn}

We consider the local system $L_r$ defined over $X_r(w_0 , \ldots , w_n)$ in Definition \ref{localsystXr} under the condition $t=-q^{-2}$.
%, in the unicolored case, namely $\alpha_1 = \ldots = \alpha_n = \alpha$ and with the additional condition that $t=q^{-2}$. 
The coefficients ring is $\Laurent = \BZ \left[ q^{\pm \alpha} , q^{\pm 1} \right]$ or equivalently $\Laurent = \BZ \left[ s , q^{\pm 1} \right]$ (setting $q^{\alpha} = s$). We recall that the braid group $\Bn$ acts by $\Rhom$ over modules $\Hrelm_{\bullet}(X_r) := \Hlf_{\bullet} \left( X_r , X_r^- ; L_r \right)$ from Lemma \ref{RepBn}.

\begin{prop}[A homological trace formula for colored Jones polynomials]\label{traceformulaforColoredJones}
Let $l \in \BN$. If the closure of an $n$-strand braid $\beta$ is the knot $K$, then the $(l+1)$-colored Jones polynomial satisfies the following homological formula:
\begin{align*}
\Jones_K(l+1) =   q^{-w(\beta)-nl}\left( 1+ \sum_{r=1}^{nl} (-1)^r \left[ \CL \left( \Rhom(\beta) , \Hrelm_{\bullet}(X_r) \right) \right]_{\alpha=l}  q^{2r}\right) .
%\sum_{r=0}^{(n-1)l} (-1)^r \left( q^{-nl} \left[ \CL \left( \Rhom(\beta) , \Hrelm_{\bullet}(X_r) \right) \right]_{\alpha_i=l}  - q^{-n(-l-2)} \left[ \CL \left( \Rhom(\beta) , \Hrelm_{\bullet}(X_r) \right) \right]_{\alpha_i=-l-2} \right) q^{2r} \\
%& + q^{-nl} \sum_{r=(n-1)l+1}^{nl} (-1)^r \left[ \CL \left( \Rhom(\beta) , \Hrelm_{\bullet}(X_r) \right) \right]_{\alpha_i=l} q^{2r} . %(q^{w(\beta)\frac{(l+1)^2-1}{2}})
\end{align*}
Traces considered in the sum (namely in the Lefschetz numbers) are elements of $\Laurent$. They are then the specialized according to $\alpha = l$, which corresponds to the ring morphism: 
\[
\aug^l : \bapp
\BZ \left[ s^{\pm 1} , q^{\pm 1} \right] & \to & \BZ \left[ q^{\pm 1} \right] \\
s & \mapsto & q^l 
\eapp
\]
already introduced at the beginning of this section.

\end{prop}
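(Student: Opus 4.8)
The plan is to start from the definition of the colored Jones polynomial as a quantum trace (Definition~\ref{coloredJonesDef}), lift it to a trace over the whole module $S_{n,l}$ using Lemma~\ref{LeHuynhargument}, and then transport everything to the homological side via the isomorphism of braid representations $\CH$ of Theorem~\ref{Jules1Theorems}. First I would recall that
\[
\Jones_K(l+1) = q^{-w(\beta)} \Tr\left(Q(\beta)K^{-1}, \CS^l\right) = q^{-w(\beta)} \Tr\left(Q(\beta)K^{-1}, S_{n,l}\right),
\]
where the second equality is exactly Lemma~\ref{LeHuynhargument}. Since $S_{n,l} = \bigoplus_{r=0}^{nl} V_{n,r}$ and the action of $\beta$ preserves each weight space $V_{n,r}$ (the braid action commutes with $K$, hence with its eigenspaces), the trace splits as a sum over $r$:
\[
\Tr\left(Q(\beta)K^{-1}, S_{n,l}\right) = \sum_{r=0}^{nl} \Tr\left(Q(\beta)K^{-1}\big|_{V_{n,r}}\right).
\]
On $V_{n,r}$ the operator $K$ acts by the scalar $q^{nl-2r}$ (after the specialization $s = q^l$), so $K^{-1}$ contributes the scalar $q^{-nl+2r}$, and I can pull it out:
\[
\Tr\left(Q(\beta)K^{-1}, S_{n,l}\right) = q^{-nl}\sum_{r=0}^{nl} q^{2r}\,\Tr\left(Q(\beta)\big|_{V_{n,r}}\right).
\]

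Next I would use Theorem~\ref{Jules1Theorems}: the isomorphism $\CH$ identifies, grading-respectingly and $\Bn$-equivariantly, the weight space $V_{n,r}$ (over the integral ring $\Laurent$, before specialization) with $\Hrelm_r = \Hlf_r(X_r, X_r^-;\Laurent)$, and the action of $\beta$ on $V_{n,r}$ with $\Rhom(\beta)$ on $\Hrelm_r$; the specialization $\alpha = l$ (i.e. $s = q^l$) is then applied afterwards, which is harmless since everything is defined integrally in \cite{Jules1}. Hence $\Tr\left(Q(\beta)\big|_{V_{n,r}}\right) = \left[\Tr\left(\Rhom(\beta), \Hrelm_r\right)\right]_{\alpha=l}$. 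By Proposition~\ref{homologystructure}, $\Hrelm_r$ is the \emph{only} non-vanishing module in the complex $\Hlf_\bullet(X_r,X_r^-;\Laurent)$, so the alternating-sum Lefschetz number reduces to a single term:
\[
\CL\left(\Rhom(\beta), \Hrelm_\bullet(X_r)\right) = (-1)^{d_r}\Tr\left(\Rhom(\beta),\Hrelm_r\right),
\]
where $d_r$ is the degree in which $\Hrelm_r$ sits. Here I need to identify $d_r$ with the parity $r$ (the chains are $r$-dimensional, as is clear from the code-sequence description where an $r$-point configuration class is built from an $r$-simplex); this gives the sign $(-1)^r$ appearing in the statement. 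Combining, and separating the $r=0$ term (where $V_{n,0}$ is one-dimensional spanned by $v_0^{\otimes n}$, contributing $1$ to the trace since $Q(\beta)$ acts trivially on it, matching the ``$1+\cdots$'' in the formula), yields exactly
\[
\Jones_K(l+1) = q^{-w(\beta)-nl}\left(1 + \sum_{r=1}^{nl} (-1)^r \left[\CL\left(\Rhom(\beta),\Hrelm_\bullet(X_r)\right)\right]_{\alpha=l} q^{2r}\right).
\]

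The main obstacle I anticipate is not the algebra but the bookkeeping of \emph{which trace lives where and when one specializes}: the homological Lefschetz number and the quantum trace must be matched over the integral ring $\Laurent = \BZ[s^{\pm1},q^{\pm1}]$ \emph{before} applying $\aug^l$, and one must check that $\CH$ of Theorem~\ref{Jules1Theorems} really is compatible with the braid action in the precise normalization used in Definition~\ref{GoodBnrep} (including the $q^{-\alpha^2/2}$ twist, cf.\ Remark~\ref{framingRemark}) — the writhe factor $q^{-w(\beta)}$ must come out clean with no leftover quadratic term. A secondary but genuine point is justifying that the $H$-Lefschetz number of $\Rhom(\beta)$ collapses to one graded piece: this uses Proposition~\ref{homologystructure} and the fact that, under the local-coefficient specialization $t = -q^{-2}$, the homology still concentrates in degree $r$ (the module structure from \cite{Jules1} is defined over $\Laurent$, so specialization of $t$ does not create new homology). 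Once these compatibilities are in place the computation is the short chain displayed above.
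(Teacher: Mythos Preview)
Your proposal is correct and follows essentially the same route as the paper: start from Definition~\ref{coloredJonesDef}, apply Lemma~\ref{LeHuynhargument} to pass from $\CS^l$ to $S_{n,l}$, split the trace over the weight decomposition $S_{n,l}=\bigoplus_{r=0}^{nl} V_{n,r}$ using that $K^{-1}$ acts by $q^{-nl+2r}$ on $V_{n,r}$, and then invoke Theorem~\ref{Jules1Theorems} together with the concentration statement of Proposition~\ref{homologystructure} to rewrite each trace as $(-1)^r$ times the $H$-Lefschetz number. Your write-up is in fact more explicit than the paper's on a couple of points (the origin of the sign $(-1)^r$ from the homological degree, and the $r=0$ contribution), and the bookkeeping worries you flag at the end---compatibility of $\CH$ with the normalized braiding of Proposition~\ref{GoodBnrep}, and persistence of the concentration in degree $r$ after imposing $t=-q^{-2}$---are exactly the points the paper absorbs into its references to \cite{Jules1} without further comment.
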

\begin{proof}
From Definition \ref{coloredJonesDef}, the aim of the proof is to compute:
\[
\Tr \left( Q(\beta) K^{-1} , \CS^l \right) .
\]
From Lemma \ref{LeHuynhargument}, we have:
\[
\Tr \left( Q(\beta) K^{-1} , \CS^l \right) = \Tr(Q(\beta)K^{-1},S_{n,l}).
\]
The following lemma computes $\Tr(Q(\beta)K^{-1},S_{nl})$ and concludes the proof.

\begin{lemma}\label{quantumTracetohomologicalTrace}
There is the following formula:
\[
\Tr(Q(\beta)K^{-1} , S_{n,l} ) = q^{-nl}\left( 1 +  \sum_{r=1}^{nl} (-1)^r \left[ \CL \left( \Rhom(\beta) , \Hrelm_{\bullet}(X_r) \right) \right]_{\alpha=l}  q^{2r} \right)
\]
where the traces considered in the sum are elements of the ring $\Laurent$. These elements are then specialized according to $\alpha = l$, corresponding to the ring morphism $\aug^l$.
\end{lemma}
\begin{proof}
One important feature is the following: recall from Proposition \ref{homologystructure} that $\Hrelm_r$ is the only non vanishing module of the complex $\Hlf_{\bullet} \left( X_r , X_r^- ; L_r \right)$. We also know from Theorem \ref{Jules1Theorems} that the $\Laurent$-modules $V_{n,r}$ are $\Bn$-equivalent to $\Hrelm_r$ (for $r \ge 1$). The lemma is then an immediate consequence of:
\[
S_{n,l} = \bigoplus_{r=0}^{nl} V_{n,r}
\]
and the fact that pre-composing by $K^{-1}$ provides a coefficient $q^{-nl} q^{2r}$, corresponding to the action of the diagonal operator $K^{-1}$ over $V_{n,r}$. The first $1$ in the parentheses is the contribution of the trace by the unique vector in $V_{n,0}$. 
\end{proof}

\end{proof}

The above formula suggests that colored Jones polynomials compute the beginning of a generating series of Lefschetz numbers indexed by $r$ over first complexes $\Hlf_{\bullet}(X_r,X_r^-)$. The latter homology complexes are the ones of $X_r$ with local system $L_r$ and relative to $X_r^-$. %The rest of this section is devoted to the interpretation of this homological trace formula in terms of fixed point theory. 

For $r \in \BN^*$, Let $\beta \in \Bn$, $\hat{\beta} \in \Homeo^+(D_n)$, stabilizing $w_0$ and such that the isotopy class of $\hat{\beta}$ is represented by $\beta$. We make the remark that $\hat{\beta}$ can be chosen so to fix $w_0$ as half-Dehn twists corresponding to braid generators can be chosen so to be supported in the interior of the punctured disk (see the description of the action of braid in the sketch of proof of Lemma~\ref{Lawrencerep}. It describes how one associates products of half Dehn twists with braids). Let:
\[
\widehat{\beta}^r : \bapp
(X_r,X_r^-) & \to & (X_r,X_r^-) \\
\lbrace z_1 , \ldots , z_r \rbrace & \mapsto  & \lbrace \widehat{\beta}(z_1) , \ldots , \widehat{\beta}(z_r) \rbrace 
\eapp
\]
be the corresponding self homeomorphism of the pair $(X_r,X_r^-)$. 

\begin{theorem}\label{JonesisLefschetz}
Let $\beta \in \Bn$, and $l \in \BN^*$. Then:
\begin{align*}
\Jones_K(l+1) = q^{-w(\beta)-nl}\left( 1+  \sum_{r=1}^{nl} (-1)^r  \left[ \CL_H \left( \widehat{\beta}^r \right) \right]_{\alpha=l} q^{2r} \right) .
%\\
%& = \sum_{r=0}^{(n-1)l} (-1)^r \left( q^{-nl} \left[ \CL_H \left( \widehat{\beta}^r \right) \right]_{\alpha_i=l}  - q^{-n(-l-2)} \left[ \CL_H \left( \widehat{\beta}^r \right) \right]_{\alpha_i=-l-2} \right) q^{2r} \\
%& + q^{-nl} \sum_{r=(n-1)l+1}^{nl} (-1)^r \left[ \CL_H \left( \widehat{\beta}^r \right) \right]_{\alpha=l} q^{2r} . %(q^{w(\beta)\frac{(l+1)^2-1}{2}}) 
\end{align*}
Here, $\CL_H(\widehat{\beta}^r) \in \BZ H$ (the abelianized Lefschetz number of $\widehat{\beta}^r$) where:
\[
H = \Coker \left( 1- \beta^r_{1,\BZ} \right) \text{ with } \widehat{\beta}^r_{1,\BZ} \in \End(H_1(X_r,X_r^-)) %\right)
\]
the action of $\widehat{\beta}^r$ on $H_1(X_r,X_r^-,\BZ)$. In the formula, $\CL_H(\widehat{\beta}^r)$ is then specialized by the augmentation morphism $\aug^l$. 
\end{theorem}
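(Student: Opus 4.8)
The strategy is to reduce Theorem~\ref{JonesisLefschetz} to Proposition~\ref{traceformulaforColoredJones}: it suffices to prove, for every $r\geq 1$, the equality of elements of $\Laurent$
\[
\CL_H(\widehat{\beta}^r)\;=\;\CL\!\left(\Rhom(\beta),\Hrelm_\bullet(X_r)\right),
\]
the left-hand side being read inside $\Laurent$ along the ring map $\BZ H\to\Laurent$ induced by $\rho_r$ (with $t=-q^{-2}$); substituting this into the formula of Proposition~\ref{traceformulaforColoredJones} and specializing $\alpha=l$ via $\aug^l$ then gives exactly the claimed identity, the leading ``$1$'' being, as there, the $r=0$ contribution.

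Before the computation I would record three preliminary points. First, since the half Dehn twists realizing the braid generators can be chosen supported in the interior of $D_n$, the representative $\widehat{\beta}$ fixes $\partial D_n$ pointwise; hence the boundary basepoint ${\pmb \xi^r}$ is a fixed point of $\widehat{\beta}^r$, there is a preferred lift of $\widehat{\beta}^r$ to the relevant cover fixing the chosen lift $\widehat{{\pmb \xi^r}}$, and the Reidemeister/coordinate data require no auxiliary path. Second, the very construction of the Lawrence representation (Lemma~\ref{Lawrencerep}) forces the $\Bn$-action on $X_r$ to preserve $\rho_r$, so $\rho_r\circ(\widehat{\beta}^r)_*=\rho_r$ on $\pi_1(X_r)$; consequently $\rho_r$ is constant on $\widehat{\beta}^r$-Reidemeister classes and descends through $\BZ H$, giving the ring map above, and the lift of $\widehat{\beta}^r$ acts $\Laurent$-linearly on the chain complex of the cover computing $\Hrelm_\bullet(X_r)$. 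Third, by Lemma~\ref{Lawrencerep} together with isotopy invariance of the homological action, the endomorphism of $\Hrelm_r$ induced by $\widehat{\beta}^r$ is precisely $\Rhom(\beta)$.

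For the core identity I would apply the Reidemeister trace formula --- Proposition~\ref{abelianizedReidemeisterTrace}, in the relative, locally finite (Borel--Moore), and $\mathrm{mod}\,K$ version with $K=\ker\rho_r$, as indicated after the Hu trace theorem in Section~\ref{surveyLefschetz} and in \cite[Appendix]{Jules1} and \cite{J}. It expresses $\CL_H(\widehat{\beta}^r)$ as the alternating sum over $q$ of the ($\BZ H$-valued) Reidemeister traces of the lift of $\widehat{\beta}^r$ on the locally finite chain complex of the cover of the pair $(X_r,X_r^-)$ attached to $\rho_r$. Pushing this along $\BZ H\to\Laurent$ and using that extension of scalars along $\rho_r$ turns that chain complex into the $\Laurent$-complex computing $\Hrelm_\bullet(X_r)$ (the identification recalled after Definition~\ref{defofH}), on which $\widehat{\beta}^r$ acts $\Laurent$-linearly by $\Rhom(\beta)$, one gets $[\CL_H(\widehat{\beta}^r)]_\Laurent=\sum_q(-1)^q\Tr\!\left(\Rhom(\beta),C^{\mathrm{lf}}_q(X_r,X_r^-;L_r)\right)$. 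By the Euler--Poincaré principle this alternating chain-level trace equals the alternating trace on homology, and by Proposition~\ref{homologystructure} only the degree-$r$ term survives, so the sum is $(-1)^r\Tr(\Rhom(\beta),\Hrelm_r)=\CL(\Rhom(\beta),\Hrelm_\bullet(X_r))$, which is the desired equality.

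The hard part is making this chain of reductions rigorous, since it requires the classical Reidemeister trace theorem in a setting where three features are simultaneously relaxed: the ambient space $X_r$ is a non-compact manifold and $\widehat{\beta}^r$ only a proper self-map (so one works with locally finite chains, realized as an inductive limit over a compact exhaustion); the invariant is relative to $X_r^-$; and the cover used is the $\mathrm{mod}\,K$ cover for $K=\ker\rho_r$ rather than the universal one. Each relaxation is individually standard and flagged in the excerpt, but they must be combined coherently. A secondary point of care is the Euler--Poincaré step: $C^{\mathrm{lf}}_\bullet(X_r,X_r^-;L_r)$ is not finitely generated over $\Laurent$, so one first replaces it by a finite free model computing the same relative Borel--Moore homology --- such a model underlies the freeness assertion of Proposition~\ref{homologystructure} --- on which the Hopf trace identity holds verbatim. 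With these in place the remainder is sign bookkeeping and substitution into Proposition~\ref{traceformulaforColoredJones}.
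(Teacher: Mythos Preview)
Your proposal is correct and follows essentially the same route as the paper: establish that $\rho_r$ (hence $\aug^l$) factors through $\BZ H$ (the paper isolates this as Lemma~\ref{factorsthroughCoker}), then identify $\CL_H(\widehat{\beta}^r)$ with the homological Lefschetz number $\CL(\Rhom(\beta),\Hrelm_\bullet(X_r))$ via the Reidemeister trace formula of Proposition~\ref{abelianizedReidemeisterTrace} combined with the Hopf trace identity, and plug into Proposition~\ref{traceformulaforColoredJones}. Your version is in fact more explicit than the paper's about the technical hypotheses needed for the trace formula (properness for the Borel--Moore setting, the relative and $\mathrm{mod}\,K$ variants, and the passage to a finite free chain model for the Euler--Poincar\'e step), all of which the paper handles by brief references; nothing in your argument diverges from the paper's line.
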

\begin{proof}
First we need the specialization $\BZ H \to \BZ \left[ q^{\pm 1} \right]$ to be well defined, which is the following lemma.
\begin{lemma}\label{factorsthroughCoker}
Let $\beta \in \Bn$ be a braid and $\widehat{\beta}^r_{1,\BZ}$ its associated action on $H_1(X_r , \BZ)$. Then the augmentation morphism: $\aug^l : \BZ H_1(X_r , \BZ) \to \BZ \left[ q^{\pm 1 } \right]$  given by $\alpha = l$ factors through $\Coker(1-\widehat{\beta}^r_{1,\BZ})$. Namely there exists a morphism $\BZ H \to \BZ \left[ q^{\pm 1} \right]$ such that the following diagram commutes:
\[
\begin{tikzcd}
\BZ H_1 \arrow{r}{\Coker}  \arrow{rd}{\aug^l} 
  & \BZ H \arrow{d}{} \\
    & \BZ \left[ q^{\pm 1} \right]
\end{tikzcd} .
\]
\end{lemma}
\begin{proof}[Proof of Lemma \ref{factorsthroughCoker}]
The proof is immediate from the commutation of the diagram in the proof of Lemma \ref{RepBn}. 
\end{proof}
Then we recall from Proposition \ref{homologystructure} that the complexes $\Hlf_{\bullet} \left( X_r(w_0) ; L_r  \right)$ and $ \Hlf_{\bullet} \left( X_r,X^-_r ; L_r  \right)$ are isomorphic. From the {\em Hopf trace formula} we know that the alternated sum of traces (of a chain map) over a finite dimensional chain complex is computable at the level of homology. Hence, in Proposition \ref{traceformulaforColoredJones}, the Lefschetz numbers involved correspond to an alternated sum of traces of homology actions. They are equal to alternated sum of traces of the corresponding chain complex actions and this fits with Proposition \ref{abelianizedReidemeisterTrace} that introduces the trace formula for abelianized Lefschetz numbers. 
\end{proof}

\begin{rmk}
Let $X_r(w_0) \in X_r$ be the space of configurations with no coordinate in $w_0$. From \cite[Corollary~3.8]{Jules1}, there is the following isomorphism:
\[
\Hlf_r\left( X_r(w_0), \Laurent \right) \simeq \Hrelm_r.
\]
The above homology module on the left is absolute (not relative). It fits with definition of Lefschetz numbers and Nielsen fixed point theory strictly as defined in Section \ref{surveyLefschetz}. Namely, Lefschetz numbers from Theorem \ref{JonesisLefschetz} deal with Nielsen fixed point theory of homeomorphism of $X_r$ fixing $w_0$. 
\end{rmk}

\begin{rmk}
These Lefschetz numbers are the abelianized ones but specialized by the morphism $\aug^l$. They contain the following information.
\begin{itemize}
\item The specialization under $\aug^l$ corresponds to a $mod$ $K$ Lefschetz number (\cite[Chapter~III.2]{J}), such that $K$ is the kernel of $\aug^l \circ \ab : \pi_1(X_r(w_0)) \to \BZ \left[ q^{\pm 1 } \right]$. This kernel is a normal subgroup of $\pi_1(X_r(w_0))$ that corresponds to a lower abelian covering space such that the covering deck transformations group is $\BZ = \BZ \langle q \rangle$. The number of non-zero terms corresponds to the number of the corresponding  $mod$ $K$ Nielsen number that we denote $\CN^{l}(f)$. The latter counts the $mod$ $K$ Nielsen classes, where two fixed points are in the same class iff there exists a path joining them and which differs from its $\widehat{\beta}^r$-image by an element of $K$. 
\item From Remark \ref{lowerboundandlefschetz} we conclude that if one succeed in counting non-zero terms in the weighted expression for the colored Jones polynomials, one would get lower bounds for some classical Nielsen numbers.
\item They contain classical Lefschetz numbers (under the specialization $q \mapsto 1$). 
\end{itemize}
\end{rmk}

\section{Interpretation via homological intersection theory}\label{SectionIntersection}

This Section is devoted to an interpretation of Theorem \ref{JonesisLefschetz} and more precisely Proposition \ref{traceformulaforColoredJones} in terms of homological intersections between precise classes. The colored Jones polynomial of the closure of a braid $\beta$ is defined here as the trace of the quantum representation of $\beta$ over tensor products of Verma modules (more precisely the trace restricted to a subrepresentation). The tensor product of Verma modules is identified with the homology module $\CH := \bigoplus_{r \in \BN} \Hrelm_r$, see Theorem \ref{Jules1Theorems}. This is the main content of the trace formula from Proposition \ref{traceformulaforColoredJones}. 

If $\lbrace e_1, \ldots ,e_n \rbrace$ is a basis of a vector space $V$ and $f \in \End(V)$ then of course:
\[
\Tr(f) = \sum_i \langle f(e_i) , e^i \rangle
\]
where the $e^i$'s refer to the dual basis, and $\langle \cdot , \cdot \rangle$ refers to the corresponding duality pairing (evaluation). In our context while $V$ is the free module $\Hrelm_r$  and basis of $e_i$'s are the code sequences from Definition \ref{codesequences}. In this section, we want to interpret the product $\langle \cdot , \cdot \rangle$ as an intersection pairing between manifolds associated with code sequences and their homologically (yet to be) defined dual.

In Section \ref{SectionPLduality} we state the Poincaré--Lefschetz duality in the context of homology modules $\Hrelm$. In Section \ref{SectionBarcodes} we introduce the family of {\em barcodes} which happens to be dual to the family of code sequences generating modules $\Hrelm$. In Section \ref{SubsectionIntersection} we state and prove Theorem \ref{pairingformulaforJones} expressing colored Jones polynomials in terms of intersection pairings between code sequences and barcodes. We then apply it to the trefoil knot, to give an example of computation.

%\section{Duality interpretation}

\subsection{Poincaré--Lefschetz duality}\label{SectionPLduality}

\begin{lemma}[Poincaré--Lefschetz duality for $\Hrelm_r$]\label{PLdualityrelbound}
The Poincaré--Lefschetz duality in our context provides the following non-degenerate pairing:
\[
\Hrelm_r= \Hlf \left( X_r,X_r^{-}, \Laurent \right) \times \Hnot_r \left( X_r, {X_r^{-}}^{\partial \complement}, \Laurent \right) \to \Laurent. 
\]
with ${X_r^{-}}^{\partial \complement} := \partial X_r  \setminus X_r^{-}$ is the complement of $X_r^{-}$ in the boundary of $X_r$. 
\end{lemma}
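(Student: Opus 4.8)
The plan is to deduce this statement from the standard Poincaré--Lefschetz duality for manifolds with boundary, applied to the configuration space $X_r$, which is an open manifold (hence the need for Borel--Moore/locally finite homology on one side), of real dimension $2r$. First I would recall that $X_r = X_r(w_1,\ldots,w_n)$ is an oriented $2r$-dimensional manifold with boundary $\partial X_r$, and that it admits a decomposition of its boundary into two pieces: the subset $X_r^{-}$ (configurations having a point at $w_0$), which is itself naturally a boundary stratum once one thickens $w_0$ into a small boundary arc, and its complement ${X_r^{-}}^{\partial\complement} := \partial X_r \setminus X_r^{-}$. The key point, which I would state carefully, is that $(X_r; X_r^{-}, {X_r^{-}}^{\partial\complement})$ is a manifold triad in the sense needed for the Lefschetz duality isomorphism, i.e. $\partial X_r$ is the union of the two pieces glued along their common boundary.

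The core of the argument is then the cap product with the (locally finite / Borel--Moore) fundamental class $[X_r, \partial X_r] \in \Hlf_{2r}(X_r, \partial X_r; \Laurent)$, twisted by the local system $L_r$. Since the local system is rank one, the fundamental class with $L_r$-coefficients exists after checking orientability of the associated flat line bundle (this is automatic here because $\rho_r$ maps into the units of $\Laurent$ and $X_r$ is orientable). Capping with this class yields isomorphisms of the Poincaré--Lefschetz type
\[
\Hlf_{k}\left(X_r, X_r^{-}; L_r\right) \xrightarrow{\ \cap [X_r,\partial X_r]\ } \Hnot^{2r-k}\left(X_r, {X_r^{-}}^{\partial\complement}; L_r\right),
\]
where on the right one uses ordinary (compactly supported on the relative part, i.e. finite-chain) cohomology. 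Then I would invoke the universal coefficient / duality identification: since $\Hrelm_r$ is free over $\Laurent$ by Proposition~\ref{homologystructure}, cohomology with $L_r$-coefficients is identified with the $\Laurent$-dual of homology, so the right-hand side is dual to $\Hnot_{2r-k}\left(X_r, {X_r^{-}}^{\partial\complement}; L_r\right)$ (one must be a little careful and use the dual local system $L_r^{*}$, but for a rank-one system $L_r^{*} \cong L_r$ after the substitution $s \mapsto s^{-1}$, $t \mapsto t^{-1}$; I would note this or absorb it into the statement). Specializing to $k = r$ and using that $\Hrelm_r$ is concentrated in degree $r$ (again Proposition~\ref{homologystructure}), both sides are concentrated in the single relevant degree, and the duality isomorphism becomes precisely a perfect $\Laurent$-bilinear pairing
\[
\Hlf_r\left(X_r, X_r^{-}; \Laurent\right) \times \Hnot_r\left(X_r, {X_r^{-}}^{\partial\complement}; \Laurent\right) \to \Laurent,
\]
which is the claimed non-degenerate pairing.

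The main obstacle I anticipate is purely technical rather than conceptual: making the manifold-with-corners / triad structure precise for the non-compact configuration space and its various boundary strata, so that Lefschetz duality genuinely applies. One has to thicken the puncture $w_0$ (and arrange $X_r^{-}$ to sit as a codimension-zero piece of $\partial X_r$ rather than something of the wrong dimension), handle the corner locus $X_r^{-} \cap {X_r^{-}}^{\partial\complement}$, and check that the locally finite theory behaves well here — this is where I would lean on the appendix of \cite{Jules1}, which sets up exactly the relative Borel--Moore framework and the relevant duality, and on \cite[Corollary~3.8]{Jules1} to know the homology is concentrated in the top-but-one degree $r$. So concretely the proof is short: cite the triad/manifold structure from \cite{Jules1}, invoke cap product with the twisted fundamental class to get the duality isomorphism, use freeness and degree-concentration from Proposition~\ref{homologystructure} to turn the isomorphism into a perfect pairing, and identify the dual local system.
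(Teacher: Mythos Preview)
Your approach is correct and is essentially the same as the paper's: both deduce the pairing from classical Poincaré--Lefschetz duality for a manifold triad with local coefficients. The paper's own proof is in fact shorter and more citation-based than yours: it invokes \cite[Theorem~3.43]{Hat} for the split-boundary version of Lefschetz duality, then points to the hyperplane-arrangement literature (\cite{K-1}, \cite[Lemma~2.9]{AoKi}, \cite[Lemma~4.1]{Big0}) for the non-compact local-system Poincaré duality, noting that $X_r$ is homotopy equivalent to the quotient of a hyperplane complement. Your explicit description via cap product with the twisted Borel--Moore fundamental class, followed by universal coefficients using the freeness from Proposition~\ref{homologystructure}, is exactly the mechanism underlying those citations, so nothing is genuinely different.

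One small correction: you refer to $w_0$ as a ``puncture'' to be thickened, but in the paper $w_0=-1$ is a point on $\partial D$, not a removed interior point; hence $X_r^{-}$ is already a subset of $\partial X_r$. You may still wish to thicken it to a codimension-zero collar to get an honest triad, but the geometry is simpler than you anticipate.
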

\begin{proof}
It is a classical result, see for example \cite[Theorem~3.43]{Hat} for separating the boundary in two (relative) parts as the Lemma suggests. This result in \cite{Hat} requires the following Poincaré duality:
\[
\Hlf_r (X_r; \Laurent) \xrightarrow{\sim} \Hnot^{r}(X_r, \partial X_r ;\Laurent),
\]
where the local ring action is given by the local system defined in \ref{localsystXr}. The latter is the Poincaré duality in the case of non-compact manifolds. One could find such a duality in the case of hyperplane arrangements (in the local system formalism) in \cite{K-1} Relation (2.3), or \cite{AoKi} Lemma 2.9. Or in a slight different formulation after Lemma 4.1 in \cite{Big0} but in the case of this precise unordered configuration space. One remarks that $X_r$ is homotopically equivalent to the quotient of the following hyperplanes complement:
\[
\lbrace (z_1, \ldots , z_r) \in \BC^r \text{ s.t. } z_i \neq z_j \forall i,j, \text{ and } z_i \neq w_k \forall i,k \rbrace
\]
by permutations. 

The pairing is given by intersection number, see \cite[Lemma~2.9]{AoKi}.
\end{proof}

\begin{Not}
For all $r \in \BN$, we fix the following notation:
\[
\Hrelp_r := \Hnot_r \left( X_r, {X_r^{-}}^{\partial \complement}, \Laurent \right) .
\]
and:
\[
X_r^+:= {X_r^{-}}^{\partial \complement} 
\]
\end{Not}

While $\Hrelm_r= \Hlf \left( X_r,X_r^{-}, \Laurent \right)$ is generated by code sequences, it remains to describe its dual basis regarding the above pairing. It is the purpose of the next section.

\subsection{Barcodes dual basis}\label{SectionBarcodes}

\begin{defn}[Barcodes]\label{barcodes}
For $r \in \BN$, and $(k_0,\ldots, k_{n-1})$ integers such that $\sum k_i =r$.  A {\em barcode} $B(k_0,\ldots, k_{n-1})$ is the following diagram:

\begin{equation*}
\begin{tikzpicture}[scale=0.8,decoration={
    markings,
    mark=at position 0.5 with {\arrow{<}}}
    ] 
\node (w0) at (-5,0) {};
\node (w00) at (5,0) {};
\node (w1) at (-1.5,0) {};
\node (w2) at (0.5,0) {};
\node[gray] at (1.3,0.0) {\ldots};
\node (wn) at (4,0) {};
\node (wn1) at (2,0) {};

\node[gray,circle,fill,inner sep=0.8pt] (xir) at (-4.8,-3) {};
\node[below,gray] at (xir) {$\xi_r$};
\node[below=5pt,gray] at (-3.2,-3) {$\ldots$};
\node[gray,circle,fill,inner sep=0.8pt] (xi1) at (-1.7,-3) {};
\node[below,gray] at (xi1) {$\xi_1$};
\node[gray,circle,fill,inner sep=0.8pt] (xirk0) at (-3.9,-3) {};
%\node[below,gray] at (xirk0) {$\xi_{r-k_0}$};

\coordinate (a) at (-3,-3);
\coordinate (b) at (4,4);

%\node[gray] at (w00)[above] {$w_{\infty}$};
\node[gray] at (wn)[above] {$w_n$};
\node[gray] at (wn1)[above] {$w_{n-1}$};
\node[gray] at (w2)[above] {$w_2$};
\node[gray] at (w1)[above] {$w_1$};
\node[gray] at (w0) [left=5pt] {$w_0$};
\foreach \n in {w1,w2,wn,wn1}
  \node at (\n)[gray,circle,fill,inner sep=2pt]{};
\node at (w0)[gray,circle,fill,inner sep=2pt]{};

\draw[blue] (xi1)--(xir)|-(b);
\node at (-4.3,0.5) {$\ldots$};
\node[above] at (-4.3,0.5) {$k_0$};
\draw[blue] (-3.9,-3)--(-3.9,-3)|-(b);

\draw[blue] (-3.8,-3)--(-1.1,0)--(-1.1,0)|-(b);
\node at (-0.7,0.5) {$\ldots$};
\node[above] at (-0.7,0.5) {$k_1$};
\draw[blue] (-3,-3)--(-0.3,0)--(-0.3,0)|-(b);

\draw[blue] (-2.5,-3)--(2.8,0)--(2.8,0)|-(b);
\node at (3.2,0.5) {$\ldots$};
\node[above] at (3.2,0.5) {$k_{n-1}$};
\draw[blue] (xi1)--(3.6,0)--(3.6,0)|-(b);

\node[gray,circle,fill,inner sep=0.8pt] (xij1) at (-3.8,-3) {};
\node[gray,circle,fill,inner sep=0.8pt] (xij2) at (-2.5,-3) {};
\node[gray,circle,fill,inner sep=0.8pt] (xij3) at (-3,-3) {};
\node[below,gray] at (xij1) {$\xi_j$};
\node[below,gray] at (xij2) {$\xi'_j$};

%\draw[gray] (-5,4) -- (-5,1);
\draw[gray] (-5,4) -- (-5,-3);
\draw[gray] (-5,-3) -- (5,-3);% node[right] {$\partial D_n$};
\draw[gray] (5,4) -- (5,-3);
\draw[gray] (5,4) -- (-5,4);
\end{tikzpicture} .
\end{equation*}
where $\xi_j=\xi_{r-(k_0+1)}$ and $\xi_{j'}=\xi_{k_{n-1}}$ are defined only for informal reasons. It consists in $r$ arcs attached (at the bottom) to $\lbrace \xi_1 , \ldots, \xi_r \rbrace$. The $k_0$ leftmost are passing between $w_0$ and $w_1$, next $k_1$ between $w_1$ and $w_2$ and so on, until rightmost $k_{n-1}$ arcs passing between $w_{n-1}$ and $w_n$. They naturally define a class in  $\Hrelp_r$ as they are {\em noodles} (defined in \cite{Big1}). We recall briefly the protocole. 
Let $I$ be an interval, one can associate with the union of blue arcs the embedding:
\[
I^r \to X_r
\]
where copies of $I$ are sent (one by one) to blue arcs from the leftmost to the rightmost. As the boundary of such a hypercube lies in the relative part ${X_r^{+}}$, this embedding defines a class in $\Hnot(X_r,X_r^+;\BZ)$. There is only one lift of this hypercube containing $\widehat{{\pmb \xi}^r}$ (the chosen lift of base point) in $\widehat{X_r}$. It allows to choose a lift $\widehat{B}(k_0,\ldots,k_{n-1})$ of the hypercube to $\widehat{X_r}$.  It defines a class in $\Hrelp_+$ that we still call barcode by extension and that we denote $\left[ B(k_0,\ldots, k_{n-1}) \right]$. 
\end{defn}

Now we describe the intersection pairing arising from Lemma \ref{PLdualityrelbound}. Computations for the case of hyperplane arrangements (that applies in our context) and in terms of intersection number with local coefficients are made for example in \cite[Section~2.3.3]{AoKi}, but with quite different notations for the local system formalism. Here, we try to recover a construction adapted with our notations, following S. Bigelow's work (see for instance \cite{Big1,Big2}). Let $M_{\pm}$ be manifold representatives of cycles of the pairs $(X_r,X_r^+)$ and $(X_r,X_r^-)$ respectively. 
\begin{itemize}
\item $M_+$ and $M_-$ intersect in a finite number of point. (Here the locally finite hypothesis on the left is important)
\item By considering cycles of $\Hrelp_r$ and $\Hrelm_r$ respectively, it defines lifts $\widehat{M_{\pm}}$ of $M_{\pm}$ to $\widehat{X_r}$. 
\item Let $p \in M_+ \cap M_-$, and $\widehat{p}$ a lift of $p$ in $\widehat{M_-}$.
\item By the unique lift property for covering spaces, there exists a unique $m_p \in \Laurent$ such that $\widehat{M_-}$ and $m_p \widehat{M_+}$ intersect in $\widehat{p}$. 
\item The intersection between these two manifolds is a sign $\epsilon_p \in \lbrace \pm 1 \rbrace$.
\item By the $\Laurent$-linearity, at point $p$, the contribution to the intersection between $M_-$ and $M_+$ with local coefficients is $\epsilon_p m_p$. 
\item Finally:
\[
\langle M_-,M_+ \rangle_\Laurent = \sum_{p \in M_- \cap M_+} \epsilon_p m_p .
\]
%\item This construction does not depend on the choice of lifts for $M_{\pm}$ thanks to the $\Laurent$-linearity. 
\end{itemize}

We do a concrete example of computation for standard code sequences and bar codes in the case $n=1$.

\begin{example}[$n=1$]\label{THEexample}
In this example we work with only one puncture, in the case $n=1$. For $r \in \BN$ we want to compute:
\[
\langle U(r) , B(r) \rangle_{\Laurent}
\]
the dual pairing between standard code sequence and bar code. Here is the corresponding diagram. 

\begin{equation*}
\begin{tikzpicture}[scale=0.6,decoration={
    markings,
    mark=at position 0.5 with {\arrow{<}}}
    ] 
\node (w0) at (-5,0) {};
\node (w00) at (5,0) {};
\node (w1) at (3,0) {};
%\node (w2) at (0.5,0) {};
%\node[gray] at (1.3,0.0) {\ldots};
%\node (wn) at (4,0) {};
%\node (wn1) at (2,0) {};

\node[gray,circle,fill,inner sep=0.8pt] (xir) at (-3,-3) {};
\node[below,gray] at (xir) {$\xi_r$};
\node[below=5pt,gray] at (-1,-3) {$\ldots$};
\node[gray,circle,fill,inner sep=0.8pt] (xi1) at (1,-3) {};
\node[below,gray] at (xi1) {$\xi_1$};
%\node[gray,circle,fill,inner sep=0.8pt] (xirk0) at (-3.9,-3) {};
%\node[below,gray] at (xirk0) {$\xi_{r-k_0}$};

\coordinate (a) at (-3,-3);
\coordinate (b) at (4,4);

%\node[gray] at (w00)[above] {$w_{\infty}$};
%\node[gray] at (wn)[above] {$w_n$};
%\node[gray] at (wn1)[above] {$w_{n-1}$};
%\node[gray] at (w2)[above] {$w_2$};
\node[gray] at (w1)[above] {$w_1$};
\node[gray] at (w0) [left=5pt] {$w_0$};
\foreach \n in {w1}
  \node at (\n)[gray,circle,fill,inner sep=2pt]{};
\node at (w0)[gray,circle,fill,inner sep=2pt]{};

%\node[blue,thin,scale=2] (-1,5) {$\overbrace{}$};

\draw[blue] (xir)--(xir|-b);
%\node at (-4.3,1.5) {$\ldots$};
\node[blue,above] at (-1,5.5) {$r$};
\node[blue,above,scale=2] at (-1,3) {$\ldots$};
\node[blue,scale=3,thin] at (-1,5) {$\overbrace{}$};
\draw[blue] (xi1)--(xi1|-b);

%\node[gray,circle,fill,inner sep=0.8pt] (xij1) at (-3.8,-3) {};
%\node[gray,circle,fill,inner sep=0.8pt] (xij2) at (-2.5,-3) {};
%\node[gray,circle,fill,inner sep=0.8pt] (xij3) at (-3,-3) {};
%\node[below,gray] at (xij1) {$\xi_j$};
%\node[below,gray] at (xij2) {$\xi'_j$};

\draw[dashed] (w0) to node[midway,above] (k0) {$r$} (w1);

\coordinate (c) at (-1.5,-1.5);

\draw[double,red,thick] (k0)--(k0|-c);
\draw[red] (k0|-c)--(xi1);
\draw[red] (k0|-c)--(xir);

\node[scale=4] at (xi1|-w0) {$\cdot$};
\node[below right] at (xi1|-w0) {$p_1$};
\node[scale=4] at (xir|-w0) {$\cdot$};
\node[below left] at (xir|-w0) {$p_r$};

%\draw[gray] (-5,4) -- (-5,1);
\draw[gray] (-5,4) -- (-5,-3);
\draw[gray] (-5,-3) -- (5,-3);% node[right] {$\partial D_n$};
\draw[gray] (5,4) -- (5,-3);
\draw[gray] (5,4) -- (-5,4);
\end{tikzpicture} .
\end{equation*}
In this diagram one sees where the (indexed $r$) dashed arc from the multi-arcs intersects the several plain (blue) arcs from the barcode in $D_n$. Namely there are $r$ intersection points denoted $p_1,\ldots, p_r$ from right to left. There is a unique intersection point in $U(r) \cap B(r)$ (considered as subspaces of $X_r$), namely the configuration ${\pmb p} := \lbrace p_1 , \ldots , p_r \rbrace$. This guarantees, following notations described above, that:
\[
\langle U(r) , B(r) \rangle_{\Laurent} = \epsilon_{\pmb p} m_{\pmb p} .
\]
The coefficient $m_{\pmb p }$ can be computed by considering the path $\delta_{\pmb p}$ of $X_r$ constructed as the composition of the following steps:
\begin{itemize}
\item First the path going from $\lbrace \xi_1 , \ldots , \xi_r \rbrace$ to $U(r)$ following red handles,
\item then joining $\lbrace p_1 , \ldots , p_r \rbrace$ going along $U(r)$,
\item then going back to ${\pmb \xi}^r$ running along $B(r)$.
\end{itemize}
This path is a loop of $X_r$ based on ${\pmb \xi}$. By considering one of its lift to $\widehat{X_r}$, one can check that it relates $\widehat{\pmb \xi}$ and $m_{\pmb p } \widehat{\pmb \xi}$, where $\widehat{\pmb \xi}$ is a lift of the base point ${\pmb \xi}$. The explanation of this fact is exactly the same as the one given before Lemma 3.11 in \cite{Jules2} which is adapted from Section 3.1 of \cite{Big1}.  Knowing this, we directly conclude:
\[
m_{\pmb p} = \rho_r(\delta_{\pmb p }),
\]
and moreover that:
\[
m_{\pmb p} = \rho_r(\delta_{\pmb p })=1. 
\]
One can check that the braid given by $\delta_{\pmb p}$ seen as an element of $\pi_1(X_r,{\pmb \xi})$ (following the model \cite[Remark~2.2]{Jules1}) is trivial. This ensures the above equality by the definition of $\rho_r$ (Definition \ref{localsystXr}). 

It remains to compute $\epsilon_{\pmb p}$. The intersection at this point is positive, one can check \cite[Claim~3.3]{Big1} for an explanation (orientation between dashed arc and blue arcs is positive, then the monomial $m_{\pmb p}$ has a pair exponent of $t$).
Finally, for all $r \in \BN$:
\begin{equation}\label{ArLrisone}
\langle U(r) , B(r) \rangle_{\Laurent} = 1.
\end{equation}

\end{example}

\begin{rmk}[Homological interpretation for quantum factorials]
We re-compute the pairing from Example \ref{THEexample} to see quantum factorials appearing from the pairing. We recall the following expression relating a standard code sequence with a {\em standard multi fork} (defined in \cite[Section~3.1]{Jules1}) happening in $\Hrelm_r$:
\begin{equation}\label{plaintodashed}
\vcenter{\hbox{
\begin{tikzpicture}[decoration={
    markings,
    mark=at position 0.5 with {\arrow{>}}}
    ]
\node (w0) at (-1,0) {};
\node (w1) at (1,0) {};
\coordinate (a) at (-1,-1);

\draw[postaction={decorate}] (w0) to[bend left=40] node[pos=0.3,above] (k1) {} (w1);
\node at (w1)[above=4pt,left=23pt] {.};
\node at (w1)[left=23pt] {.};
\node at (w1)[below=4pt,left=23pt] {.};
\node at (w1)[above=20pt,left=20pt] {$k$};
\draw[postaction={decorate}] (w0) to[bend left=-40] node[pos=0.7,above] (kk) {} (w1);

\node[gray] at (w0)[left=5pt] {$w_i$};
\node[gray] at (w1)[right=5pt] {$w_j$};
\foreach \n in {w0,w1}
  \node at (\n)[gray,circle,fill,inner sep=3pt]{};

\draw[red] (k1) -- (k1|-a);
\draw[red] (kk) -- (kk|-a);

\end{tikzpicture}
}}
=
(k)_{-t}! \vcenter{\hbox{
\begin{tikzpicture}[decoration={
    markings,
    mark=at position 0.5 with {\arrow{>}}}
    ]
\node (w0) at (-1.5,0) {};
\node (w1) at (1.5,0) {};
\coordinate (a) at (-1,-1);

\draw[dashed] (w0) -- (w1) node[pos=0.5,above] (k0) {$k$};

\node[gray] at (w0)[left=5pt] {$w_i$};
\node[gray] at (w1)[right=5pt] {$w_j$};
\foreach \n in {w0,w1}
  \node at (\n)[gray,circle,fill,inner sep=3pt]{};

\draw[double,red,thick] (k0) -- (k0|-a);

\end{tikzpicture}
}}.
\end{equation}
It arises straightforwardly from \cite[Corollary~4.10]{Jules1}. The diagram of the multi-fork on the left corresponds to a lift of a hypercube similarly as for the barcodes from Definition \ref{barcodes} above, see \cite[Section~3.1]{Jules1} for the detailed construction. We denote it $F(r)$. We start by computing the pairing $\langle F(r),B(r) \rangle_{\Laurent}$. Here is the corresponding diagram:
\begin{equation*}
\begin{tikzpicture}[scale=0.6,decoration={
    markings,
    mark=at position 0.5 with {\arrow{>}}}
    ] 
\node (w0) at (-5,0) {};
\node (w00) at (5,0) {};
\node (w1) at (3,0) {};
%\node (w2) at (0.5,0) {};
%\node[gray] at (1.3,0.0) {\ldots};
%\node (wn) at (4,0) {};
%\node (wn1) at (2,0) {};

\node[gray,circle,fill,inner sep=0.8pt] (xir) at (-3,-3) {};
\node[below,gray] at (xir) {$\xi_r$};
\node[below=5pt,gray] at (-1,-3) {$\ldots$};
\node[gray,circle,fill,inner sep=0.8pt] (xi1) at (1,-3) {};
\node[below,gray] at (xi1) {$\xi_1$};
%\node[gray,circle,fill,inner sep=0.8pt] (xirk0) at (-3.9,-3) {};
%\node[below,gray] at (xirk0) {$\xi_{r-k_0}$};

\coordinate (a) at (-3,-3);
\coordinate (b) at (4,4);

%\node[gray] at (w00)[above] {$w_{\infty}$};
%\node[gray] at (wn)[above] {$w_n$};
%\node[gray] at (wn1)[above] {$w_{n-1}$};
%\node[gray] at (w2)[above] {$w_2$};
\node[gray] at (w1)[above] {$w_1$};
\node[gray] at (w0) [left=5pt] {$w_0$};
\foreach \n in {w1}
  \node at (\n)[gray,circle,fill,inner sep=2pt]{};
\node at (w0)[gray,circle,fill,inner sep=2pt]{};

%\node[blue,thin,scale=2] (-1,5) {$\overbrace{}$};

\draw[blue] (xir)--(xir|-b);
%\node at (-4.3,1.5) {$\ldots$};
\node[blue,above] at (-1,5.5) {$r$};
\node[blue,above,scale=2] at (-1,3) {$\ldots$};
\node[blue,scale=3,thin] at (-1,5) {$\overbrace{}$};
\draw[blue] (xi1)--(xi1|-b);

\node[scale=3,thin] at (4,0) {$\rbrace$};
\node at (4.5,0) {$r$};
\node at (-1,0.25) {$\vdots$};

%\node[gray,circle,fill,inner sep=0.8pt] (xij1) at (-3.8,-3) {};
%\node[gray,circle,fill,inner sep=0.8pt] (xij2) at (-2.5,-3) {};
%\node[gray,circle,fill,inner sep=0.8pt] (xij3) at (-3,-3) {};
%\node[below,gray] at (xij1) {$\xi_j$};
%\node[below,gray] at (xij2) {$\xi'_j$};

\draw[postaction={decorate}] (w0) to[bend right] node[pos=0.7,above] (k0) {} node[pos=0.235,scale=2.5] (pr1) {$\cdot$} node[pos=0.765,scale=2.5] (prr) {$\cdot$} (w1);
\draw[postaction={decorate}] (w0) to[bend left] node[pos=0.3,above] (k1) {} node[pos=0.235,scale=2.5] (p11) {$\cdot$} node[pos=0.765,scale=2.5] (p1r) {$\cdot$}  (w1);

\node[above left] at (p11) {$p_{1,1}$};
\node[above right] at (p1r) {$p_{1,r}$};
\node[below left] at (pr1) {$p_{r,1}$};
\node[below right] at (prr) {$p_{r,r}$};

\coordinate (c) at (-1.5,-1.5);

\draw[red] (k0)--(k0|-c);
\draw[red] (k0|-c)--(xi1);
\draw[red] (k1)--(k1|-c);
\draw[red] (k1|-c)--(xir);

%\draw[gray] (-5,4) -- (-5,1);
\draw[gray] (-5,4) -- (-5,-3);
\draw[gray] (-5,-3) -- (5,-3);% node[right] {$\partial D_n$};
\draw[gray] (5,4) -- (5,-3);
\draw[gray] (5,4) -- (-5,4);
\end{tikzpicture} .
\end{equation*}

This time there are $r$ arcs from the multi-fork intersecting $r$ arcs from the barcode, so that there are $r^2$ intersection point in $D_n$. They fit on a grid so that we denote them:
\[
\begin{pmatrix}
p_{1,1} & \cdots & p_{1,r} \\
\vdots & \vdots & \vdots \\
p_{r,1} & \cdots & p_{r,r}
\end{pmatrix} . 
\]
To get an intersection configuration in $X_r$, one has to choose one and only one point for each line and column. Such a choice corresponds to a choice of an $r\times r$ permutation matrix. For example to the identity matrix corresponds the configuration:
\[
{\pmb p}_{\Id} = \lbrace p_{1,1}, p_{2,2} , \ldots , p_{r,r} \rbrace.
\]
From the description of the pairing, one can check that the intersection is positive at that point. To compute the corresponding monomial $m_{{\pmb p}_{\Id}} \in \Laurent$, one should consider this time the path $\delta_{{\pmb p}_{\Id}}$ of $X_r$ constructed as the composition of the following steps:
\begin{itemize}
\item First the path going from $\lbrace \xi_1 , \ldots , \xi_r \rbrace$ to $F(r)$ following red handles,
\item then joining $\lbrace p_1 , \ldots , p_r \rbrace$ going along $F(r)$,
\item finally going back to $\lbrace \xi_1 , \ldots , \xi_r \rbrace$ following $B(r)$.
%\item finally going back to $\lbrace \xi_1 , \ldots , \xi_r \rbrace$ following the (opposite of) green handles. 
\end{itemize}
Again this path is a loop, its image by $\rho_r$ gives $m_{{\pmb p}_{\Id}}$. While (as the previous example) this loop is homotopically trivial, $m_{{\pmb p}_{\Id}} = 1$. If one chooses another permutation matrix, then the first step for the construction of the loop $\delta_{\pmb p}$ could be different (together with the sign of the intersection). Namely by changing red handles, the image of $\delta_{\pmb p}$ by $\rho_r$ could be changed by a power of $t$. Let $P$ be a permutation matrix, and ${\pmb p}_{P}$ the corresponding intersection configuration. For computing the monomial $m_{{\pmb p}_{P}}$ one has to perform some ``knitting'' operation on red handles to pass from the trivial loop to the one corresponding to the permutation $P$. Such an operation would correspond to applying the braid associated with the minimal decomposition in transposition of $P$. If $k$ is the number of transposition involved, then one can check $m_{{\pmb p}_{P}} = t^k$, and the sign of the intersection is $(-1)^k$. Finally:
\begin{equation}\label{transposum}
\langle F(r),B(r) \rangle_{\Laurent} = \sum_{k< \frac{r(r-1)}{2}} a_k (-t)^k
\end{equation}
where $a_k$ is the number of permutation having their minimal transposition decomposition involving $k$ transpositions. 

Considering this, one recovers:
\begin{equation}
\sum_{k< \frac{r(r-1)}{2}} a_k (-t)^k = \langle F(r),B(r) \rangle_{\Laurent} = (r)_{-t}! \langle A(r),L(r) \rangle_{\Laurent} = (r)_{-t}! .
\end{equation}
First equality is the above Relation (\ref{transposum}), the second is Relation (\ref{plaintodashed}), and the last one is Example \ref{THEexample} above.

\end{rmk}

The above example shows the following proposition, stating that code sequences and barcodes constitute dual bases regarding the Poincaré dual pairing.

\begin{prop}[Dual bases]\label{Dualbases}
Let $r \in \BN$ and $\CB := \lbrace B(k_0 , \ldots , k_{n-1})  \rbrace_{\sum k_i = r}$ the family of barcodes. The family $\CB$ of elements in $\Hrelp_r$ is the dual basis for $\CU$ regarding the intersection pairing $\langle \cdot, \cdot \rangle_{\Laurent}$. More precisely:
\[
\langle U(k_0 , \ldots , k_{n-1} ) , B(k'_0 , \ldots , k'_{n-1} \rangle_{\Laurent} = \delta_{(k_0 , \ldots k_{n-1}),(k'_0,\ldots , k'_{n-1})},
\]
where $\delta$ is a (list) Kronecker symbol. 
\end{prop}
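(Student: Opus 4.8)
The plan is to evaluate the intersection number $\langle U(k_0,\ldots,k_{n-1}),B(k'_0,\ldots,k'_{n-1})\rangle_\Laurent$ by the recipe spelled out just before Example~\ref{THEexample}: choose transverse geometric representatives of the two classes, enumerate their intersection points in $X_r$, and sum the local contributions $\epsilon_p m_p$. The point is that, for the representatives suggested by the two diagrams, the computation decouples gap by gap into $n$ independent copies of the one-puncture computation of Example~\ref{THEexample}.

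First I would fix a representative $M_-$ of $U(k_0,\ldots,k_{n-1})$ whose projection to $D_n$ is the union of $n$ dashed arcs $a_1,\ldots,a_n$, where $a_i$ runs just above the segment from $w_{i-1}$ to $w_i$ and carries $k_{i-1}$ of the $r$ configuration coordinates, and whose lift to $\widehat{X_r}$ is pinned down by the red handles of the diagram (pushed against $\partial D_n$ below the points $\xi_j$ as in \cite{Big1}); and a representative $M_+$ of $B(k'_0,\ldots,k'_{n-1})$ whose projection is the union of $r$ disjoint blue arcs, grouped so that the $i$-th group consists of $k'_{i-1}$ arcs crossing the real axis between $w_{i-1}$ and $w_i$ before running up to $\partial D_n$. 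A routine isotopy makes $M_-$ and $M_+$ transverse and produces the incidence pattern: $a_i$ meets each blue arc of its own $i$-th group exactly once and is disjoint from every other blue arc. Indeed, a blue arc of the $j$-th group with $j>i$ stays strictly below the real axis while its real part lies between $w_{i-1}$ and $w_i$, and one with $j<i$ has already left that range, travelling upward.

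It then follows that an intersection point of $M_-$ and $M_+$ inside $X_r$ is an unordered $r$-tuple of points of $D_n$, one on each blue arc, each of the form $a_i\cap b$ with $b$ in the $i$-th group; hence the $k'_{i-1}$ points lying on the group-$i$ arcs are exactly the points lying on $a_i$, of which there are $k_{i-1}$. So an intersection point can exist only if $k_{i-1}=k'_{i-1}$ for every $i$, that is, only if the two multi-indices coincide, in which case it is unique. For distinct multi-indices the pairing is therefore $0$. When the multi-indices agree, write $\pmb{p}$ for the unique intersection configuration; running Example~\ref{THEexample} in each of the $n$ gaps, the loop $\delta_{\pmb{p}}$ obtained by travelling from $\pmb{\xi}^r$ along the red handles, then along $M_-$ to $\pmb{p}$, then back along $M_+$ is null-homotopic in $X_r$, so $m_{\pmb{p}}=\rho_r(\delta_{\pmb{p}})=1$, while the local intersection sign is $+1$ by the orientation computation of \cite[Claim~3.3]{Big1}. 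This yields $\langle U(k_0,\ldots,k_{n-1}),B(k'_0,\ldots,k'_{n-1})\rangle_\Laurent=\delta_{(k_0,\ldots,k_{n-1}),(k'_0,\ldots,k'_{n-1})}$, and since $\CU$ is a basis of $\Hrelm_r$ by Proposition~\ref{homologystructure} and the pairing of Lemma~\ref{PLdualityrelbound} is non-degenerate, this also identifies $\CB$ with the basis of $\Hrelp_r$ dual to $\CU$.

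The step I expect to be the main obstacle is the choice of geometric representatives in the second paragraph: one must realise $M_-$ and $M_+$ transversally with intersection locus in $D_n$ exactly the pictured one, in particular with no spurious intersections created by the red handles of $U$ (which serve only to pin down the lift to $\widehat{X_r}$) meeting the lower portions of the barcode arcs near the base point. This is exactly the standard positioning of forks and noodles of \cite{Big1}; one must then still verify that the resulting loop $\delta_{\pmb{p}}$ stays trivial in $\pi_1(X_r,\pmb{\xi}^r)$, after which the enumeration of intersection points and the computation of the coefficient are just the bookkeeping of Example~\ref{THEexample} performed once in each gap.
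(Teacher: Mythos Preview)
Your proposal is correct and follows essentially the same approach as the paper's own proof: put the code sequence and the barcode in the standard relative position, observe that each blue arc of the $i$-th group meets only the $i$-th dashed arc and exactly once, deduce that an intersection configuration in $X_r$ forces $k_i=k'_i$ for all $i$ and is then unique, and compute the single contribution to be $+1$ by the same loop argument as in Example~\ref{THEexample}. Your extra closing remark, that non-degeneracy of the pairing together with Proposition~\ref{homologystructure} then forces $\CB$ to be a basis of $\Hrelp_r$, is a welcome addition that the paper leaves implicit.
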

\begin{proof}
For ${\bf k} := (k_0 , \ldots, k_{n-1})$ (resp. ${\bf k'} := (k'_0 , \ldots , k'_{n-1})$) such that $\sum k_i =r$ (resp. $\sum k'_i=r$), the picture illustrating the intersection of the proposition is the following:
\begin{equation*}
\begin{tikzpicture}[scale=0.8,decoration={
    markings,
    mark=at position 0.5 with {\arrow{<}}}
    ] 
\node (w0) at (-5,0) {};
\node (w00) at (5,0) {};
\node (w1) at (-1.5,0) {};
\node (w2) at (0.5,0) {};
\node[gray] at (1.3,0.0) {\ldots};
\node (wn) at (4,0) {};
\node (wn1) at (2,0) {};

\node[gray,circle,fill,inner sep=0.8pt] (xir) at (-4.8,-3) {};
\node[below,gray] at (xir) {$\xi_r$};
\node[below=5pt,gray] at (-3.2,-3) {$\ldots$};
\node[gray,circle,fill,inner sep=0.8pt] (xi1) at (-1.7,-3) {};
\node[below,gray] at (xi1) {$\xi_1$};
\node[gray,circle,fill,inner sep=0.8pt] (xirk0) at (-3.9,-3) {};
%\node[below,gray] at (xirk0) {$\xi_{r-k_0}$};

\coordinate (a) at (-3,-3);
\coordinate (b) at (4,4);

%\node[gray] at (w00)[above] {$w_{\infty}$};
\node[gray] at (wn)[above] {$w_n$};
\node[gray] at (wn1)[above] {$w_{n-1}$};
\node[gray] at (w2)[above] {$w_2$};
\node[gray] at (w1)[above] {$w_1$};
\node[gray] at (w0) [left=5pt] {$w_0$};
\foreach \n in {w1,w2,wn,wn1}
  \node at (\n)[gray,circle,fill,inner sep=2pt]{};
\node at (w0)[gray,circle,fill,inner sep=2pt]{};

\draw[blue] (xi1)--(xir)|-(b);
\node at (-4.3,1.5) {$\ldots$};
\node[above] at (-4.3,1.5) {$k'_0$};
\draw[blue] (-3.9,-3)--(-3.9,-3)|-(b);

\draw[blue] (-3.8,-3)--(-1.1,0)--(-1.1,0)|-(b);
\node at (-0.7,1.5) {$\ldots$};
\node[above] at (-0.7,1.5) {$k'_1$};
\draw[blue] (-3,-3)--(-0.3,0)--(-0.3,0)|-(b);

\draw[blue] (-2.5,-3)--(2.8,0)--(2.8,0)|-(b);
\node at (3.2,1.5) {$\ldots$};
\node[above] at (3.2,1.5) {$k'_{n-1}$};
\draw[blue] (xi1)--(3.6,0)--(3.6,0)|-(b);

\node[gray,circle,fill,inner sep=0.8pt] (xij1) at (-3.8,-3) {};
\node[gray,circle,fill,inner sep=0.8pt] (xij2) at (-2.5,-3) {};
\node[gray,circle,fill,inner sep=0.8pt] (xij3) at (-3,-3) {};
\node[below,gray] at (xij1) {$\xi_j$};
\node[below,gray] at (xij2) {$\xi'_j$};

\draw[dashed] (w0) to node[midway,above] (k0) {\tiny $k_0$} (w1);
\draw[dashed] (w1) to node[midway,above] (k1) {\tiny $k_1$} (w2);
\draw[dashed] (wn1) to node[pos=0.65,above] (kn1) {\tiny $k_{n-1}$} (wn);

\coordinate (c) at (-1.5,-1.5);

\draw[double,red] (k0)--(k0|-c);
\draw[red] (k0|-c) -- (xir);
\draw[red] (k0|-c) -- (-3.9,-3);

\draw[double,red] (k1)--(-2.2,-1.8);
\draw[red] (-2.2,-1.8) -- (xij1);
\draw[red] (-2.2,-1.8) -- (-3,-3);

\draw[double,red] (3,0)--(0,-1.8);
\draw[red] (0,-1.8) -- (xij2);
\draw[red] (0,-1.8) -- (xi1);

%\draw[gray] (-5,4) -- (-5,1);
\draw[gray] (-5,4) -- (-5,-3);
\draw[gray] (-5,-3) -- (5,-3);% node[right] {$\partial D_n$};
\draw[gray] (5,4) -- (5,-3);
\draw[gray] (5,4) -- (-5,4);
\end{tikzpicture} .
\end{equation*}

More precisely, code sequences and barcodes can be put in relative position such as that of the above figure up to isotopy (not modifying the pairing). In this position, there are $k'_0+\cdots + k'_{n-1}=r$ intersection points between (the $n$) dashed arcs and (the $r$) (blue) plain arcs. For a configuration in $A({\bf k})$ there are $k_0$ embedded points in the indexed $k_0$ dashed arc, $k_1$ in the indexed $k_1$ and so on. This ensures that to obtain a configuration of $r$ points belonging to both $U({\bf k})$ and $B({\bf k'})$ the following indexes must be equal:
\[
k_0 = k'_0 , \ldots , k_{n-1} = k'_{n-1}.
\]
If one of these equality is not true, the intersection is empty and the pairing is zero.

Now suppose we are in the case where ${\bf k}={\bf k'}$. Then there is a unique intersection point between $U({\bf k})$ and $B({\bf k})$ corresponding to the configuration of intersection points between plain arcs and dashed arcs in the above diagram. The computation is the same as in Example \ref{THEexample} so to obtain $1$.
\end{proof}

\subsection{Application to colored Jones polynomials}\label{SubsectionIntersection}

\subsubsection{The Formula}

\begin{Not}
Let:
\[
\langle \cdot, \cdot \rangle_{\aug^l} := \aug \circ \left( \langle \cdot,\cdot \rangle_{\Laurent} \right) : \Hrelm_r \times \Hrelp_r \to \Laurent_0:=\BZ \left[ q^{\pm 1} \right]
\]
be the intersection pairing from the Poincaré--Lefschetz duality with coefficients in $\Laurent$ composed with the augmentation morphism specializing $\alpha$ to $l \in \BN$. 
\end{Not}

\begin{thm}\label{pairingformulaforJones}
Let $\beta \in \Bn$ a braid such that its closure is the knot $K$. Then, the colored Jones polynomials satisfy the following formulae:
\begin{align}
\Jones_K(l+1) & =   q^{-w(\beta)-nl}\left( 1+ \sum_{r=1}^{nl} \sum_{\begin{array}{c} (k_0,\ldots,k_{n-1})/ \\ \sum k_i=r \end{array}} \left\langle \Rhom\left(\beta \right) \cdot U(k_0,\ldots, k_{n-1}) , B(k_0,\ldots, k_{n-1}) \right\rangle_{\aug^l} q^{2r} \right) \\
& = q^{-w(\beta)-nl} \left( 1+  \sum_{\begin{array}{c} (k_0,\ldots,k_{n-1}) / \\ \sum k_i\le nl \end{array}} \left\langle \Rhom\left(\beta \right)  \cdot U(k_0,\ldots, k_{n-1}) , B(k_0,\ldots, k_{n-1}) \right\rangle_{\aug^l} q^{2\left(\sum_i k_i\right)} \right) ,
\end{align}
where $\Rhom\left(\beta \right)  \cdot U(k_0,\ldots , k_{n-1})$ refers to the homological (Lawrence) action of $\beta$ upon the class $U(k_0,\ldots , k_{n-1})$. 
\end{thm}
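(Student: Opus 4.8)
The plan is to feed the homological trace formula of Proposition~\ref{traceformulaforColoredJones} into the elementary trace-via-dual-bases identity recalled at the start of Section~\ref{SectionIntersection}, exploiting on the one hand that each complex $\Hlf_\bullet(X_r,X_r^-;L_r)$ is concentrated in a single degree, and on the other hand that code sequences and barcodes form dual bases with respect to the Poincaré--Lefschetz pairing.

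First I would invoke Proposition~\ref{traceformulaforColoredJones}, which already gives
\[
\Jones_K(l+1) = q^{-w(\beta)-nl}\left(1 + \sum_{r=1}^{nl}(-1)^r\left[\CL\left(\Rhom(\beta),\Hrelm_\bullet(X_r)\right)\right]_{\alpha=l}q^{2r}\right).
\]
By Proposition~\ref{homologystructure}, $\Hrelm_r = \Hlf_r(X_r,X_r^-;L_r)$ is the only non-vanishing term of the graded module $\Hrelm_\bullet(X_r)$, and it sits in homological degree $r$; hence the $H$-Lefschetz number collapses to $\CL(\Rhom(\beta),\Hrelm_\bullet(X_r)) = (-1)^r\Tr(\Rhom(\beta),\Hrelm_r)$, the two factors $(-1)^r$ cancel, and the coefficient of $q^{2r}$ becomes simply $\bigl[\Tr(\Rhom(\beta),\Hrelm_r)\bigr]_{\alpha=l}$.

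Next I would expand each such trace using $\Tr(f) = \sum_i\langle f(e_i),e^i\rangle$. Since $\Hrelm_r$ is free over $\Laurent$ with basis the code sequences $\CU = \{U(k_0,\ldots,k_{n-1}) : \sum k_i = r\}$ (Proposition~\ref{homologystructure}) and $\Rhom(\beta)$ is $\Laurent$-linear (Lemma~\ref{Lawrencerep}, Definition~\ref{RepBn}), and since Lemma~\ref{PLdualityrelbound} supplies the non-degenerate $\Laurent$-bilinear intersection pairing $\langle\cdot,\cdot\rangle_\Laurent : \Hrelm_r\times\Hrelp_r\to\Laurent$ under which, by Proposition~\ref{Dualbases}, the barcodes $B(k_0,\ldots,k_{n-1})$ constitute the dual basis of $\CU$, I would obtain
\[
\Tr(\Rhom(\beta),\Hrelm_r) = \sum_{\sum k_i = r}\left\langle\Rhom(\beta)\cdot U(k_0,\ldots,k_{n-1}),\,B(k_0,\ldots,k_{n-1})\right\rangle_\Laurent.
\]
Composing with the ring morphism $\aug^l:\Laurent\to\Laurent_0$ turns $\langle\cdot,\cdot\rangle_\Laurent$ into $\langle\cdot,\cdot\rangle_{\aug^l}$ and produces the first displayed formula. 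For the second formula I would reindex the double sum over all tuples $(k_0,\ldots,k_{n-1})$ with $\sum k_i\le nl$, weighting by $q^{2\sum k_i}$, and note that the constant term $1$ is precisely the $r=0$ summand, since $\Hrelm_0\simeq\Laurent$ carries the trivial $\Bn$-action and $\langle U(0,\ldots,0),B(0,\ldots,0)\rangle_\Laurent = 1$ by Proposition~\ref{Dualbases}.

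The main obstacle, such as it is, lies in the bookkeeping of the first step: one must make sure that the (abelianized) Lefschetz number really does reduce to $\pm$ a single homology trace --- which rests on the vanishing statement of Proposition~\ref{homologystructure} and the degree count placing $\Hrelm_r$ in degree $r$ --- and that the two sign factors $(-1)^r$ cancel exactly; one must also check that, written in the code-sequence and barcode bases, the Poincaré--Lefschetz pairing genuinely records the matrix entries of $\Rhom(\beta)$, so that summing the diagonal reproduces the trace. Both verifications are routine given the results quoted above, so I do not anticipate any substantive new difficulty.
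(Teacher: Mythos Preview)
Your proposal is correct and follows essentially the same route as the paper: invoke Proposition~\ref{traceformulaforColoredJones}, then express each trace $\Tr(\Rhom(\beta),\Hrelm_r)$ as a sum of pairings using the dual bases of code sequences and barcodes from Proposition~\ref{Dualbases}. You are in fact more explicit than the paper about the sign cancellation (the $(-1)^r$ from the Lefschetz alternating sum against the $(-1)^r$ from $\Hrelm_r$ sitting in degree~$r$), which the paper's short proof leaves implicit.
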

\begin{proof}
It is a direct consequence of Proposition \ref{traceformulaforColoredJones}. The trace is interpreted as the dual pairing arising from the Poincaré--Lefschetz duality applied with dual bases of code sequences and barcodes. More precisely, from Proposition \ref{Dualbases}, if $f \in \End_{\Laurent}\left( \Hrelm_r \right)$, then:
\[
\Tr(f) = \sum_{\begin{array}{c} (k_0,\ldots,k_{n-1})/ \\ \sum k_i=r \end{array}} \left\langle f \cdot U(k_0,\ldots, k_{n-1}) , B(k_0,\ldots, k_{n-1}) \rangle_{\Laurent}\right.
\]
\end{proof}

\begin{rmk}
For a practical use of the above theorem, one can choose the expression of $\beta$ in terms of product of half Dehn twists, and make it act by homeomorphism on the manifold associated with $U(k_0,\ldots , k_{n-1})$ before computing its intersection number with the corresponding barcode. As the pairing is invariant under isotopies, this number does not depend on the choice of homeomorphism representative for $\beta$. 
\end{rmk}

The formula from the above theorem should recover formulae from \cite{Big3,Ito2,An}.

\subsubsection{The trefoil}

In this section we do the computation of the Jones polynomial of the trefoil knot. The trefoil knot is the closure of the braid $\sigma_1^3 \in \CB_2$, where $\sigma_1$ is the only (half Dehn twist) generator of the braids in two strands. For $(k_0,k_1) \in \BN^2$, such that $k_1+k_2=r\in \BN$, we have:
\begin{equation*}
\sigma_1^3 \left( \vcenter{\hbox{
\begin{tikzpicture}[scale=0.5,decoration={
    markings,
    mark=at position 0.5 with {\arrow{<}}}
    ] 
\node (w0) at (-5,0) {};
\node (w00) at (5,0) {};
\node (w1) at (-2,0) {};
\node (w2) at (2,0) {};
%\node[gray] at (1.3,0.0) {\ldots};
%\node (wn) at (4,0) {};
%\node (wn1) at (2,0) {};

\node[gray,circle,fill,inner sep=0.8pt] (xir) at (-4.8,-3) {};
\node[below,gray] at (xir) {$\xi_r$};
%\node[below=5pt,gray] at (-3.2,-3) {$\ldots$};
\node[gray,circle,fill,inner sep=0.8pt] (xi1) at (-2.3,-3) {};
\node[below,gray] at (xi1) {$\xi_1$};
\node[gray,circle,fill,inner sep=0.8pt] (xirk0) at (-3.9,-3) {};
%\node[below,gray] at (xirk0) {$\xi_{r-k_0}$};
\node[gray,circle,fill,inner sep=0.8pt] (xik1) at (-3.6,-3) {};
\node[below,gray] at (xik1) {$\xi_{k_1}$};

\coordinate (a) at (-3,-3);
\coordinate (b) at (4,4);

\draw[dashed] (w0) to node[pos=0.3,above] (k0) {$k_0$} (w1) ;
\draw[dashed] (w1) to node[pos=0.3,above] (k1) {$k_1$} (w2) ;

%\node[gray] at (w00)[above] {$w_{\infty}$};
%\node[gray] at (wn)[above] {$w_n$};
%\node[gray] at (wn1)[above] {$w_{n-1}$};
\node[gray] at (w2)[above] {$w_2$};
\node[gray] at (w1)[above] {$w_1$};
\node[gray] at (w0) [left=5pt] {$w_0$};
\foreach \n in {w1,w2}
  \node at (\n)[gray,circle,fill,inner sep=2pt]{};
\node at (w0)[gray,circle,fill,inner sep=2pt]{};
%
%\draw[blue] (xi1)--(xir)|-(b);
%\node at (-4.3,1.5) {$\ldots$};
%\node[above] at (-4.3,1.5) {$k_0$};
%\draw[blue] (-3.9,-3)--(-3.9,-3)|-(b);
%
%\draw[blue] (-3.8,-3)--(-1.1,0)--(-1.1,0)|-(b);
%\node at (-0.7,1.5) {$\ldots$};
%\node[above] at (-0.7,1.5) {$k_1$};
%\draw[blue] (-3,-3)--(-0.3,0)--(-0.3,0)|-(b);
%
%\draw[blue] (-2.5,-3)--(2.8,0)--(2.8,0)|-(b);
%\node at (3.2,1.5) {$\ldots$};
%\node[above] at (3.2,1.5) {$k_{n-1}$};
%\draw[blue] (xi1)--(3.6,0)--(3.6,0)|-(b);

%\node[gray,circle,fill,inner sep=0.8pt] (xij1) at (-3.8,-3) {};
%\node[gray,circle,fill,inner sep=0.8pt] (xij2) at (-2.5,-3) {};
%\node[gray,circle,fill,inner sep=0.8pt] (xij3) at (-3,-3) {};
%\node[below,gray] at (xij1) {$\xi_j$};
%\node[below,gray] at (xij2) {$\xi'_j$};

%\draw[dashed] (w0) to node[midway,above] (k0) {\tiny $k_0$} (w1);
%\draw[dashed] (w1) to node[midway,above] (k1) {\tiny $k_1$} (w2);
%\draw[dashed] (wn1) to node[pos=0.65,above] (kn1) {\tiny $k_{n-1}$} (wn);

\coordinate (c) at (-1.5,-1.5);

\draw[double,red] (k0)--(k0|-c);
\draw[red] (k0|-c) -- (xir);
\draw[red] (k0|-c) -- (xirk0);
%\draw[red] (k0|-c) -- (-3.9,-3);

\draw[double,red] (k1)--(k1|-c);
\draw[red] (k1|-c) -- (xi1);
\draw[red] (k1|-c) -- (xik1);
%\draw[red] (-2.2,-1.8) -- (-3,-3);

%\draw[double,red] (3,0)--(0,-1.8);
%\draw[red] (0,-1.8) -- (xij2);
%\draw[red] (0,-1.8) -- (xi1);

%\draw[gray] (-5,3) -- (-5,1);
\draw[gray] (-5,3) -- (-5,-3);
\draw[gray] (-5,-3) -- (5,-3);% node[right] {$\partial D_n$};
\draw[gray] (5,3) -- (5,-3);
\draw[gray] (5,3) -- (-5,3);
\end{tikzpicture} }}%\right).
\right)= 
\vcenter{\hbox{
\begin{tikzpicture}[scale=0.5,decoration={
    markings,
    mark=at position 0.5 with {\arrow{<}}}
    ] 
\node (w0) at (-5,1) {};
\node (w00) at (5,1) {};
\node (w1) at (2,1) {};
\node (w2) at (-2,1) {};
%\node[gray] at (1.3,0.0) {\ldots};
%\node (wn) at (4,1) {};
%\node (wn1) at (2,1) {};
\coordinate (step1) at (3.5,1);
\coordinate (step2) at (-3.5,1);
\coordinate (rouge1) at (-3,1);
\coordinate (rouge2) at (3,1);
\coordinate (rouge3) at (-4,1);
\coordinate (rouge4) at (4,1);

\node[gray,circle,fill,inner sep=0.8pt] (xir) at (-4.8,-3) {};
\node[below,gray] at (xir) {$\xi_r$};
%\node[below=5pt,gray] at (-3.2,-3) {$\ldots$};
\node[gray,circle,fill,inner sep=0.8pt] (xi1) at (-2.3,-3) {};
\node[below,gray] at (xi1) {$\xi_1$};
\node[gray,circle,fill,inner sep=0.8pt] (xirk0) at (-3.9,-3) {};
%\node[below,gray] at (xirk0) {$\xi_{r-k_0}$};
\node[gray,circle,fill,inner sep=0.8pt] (xik1) at (-3.6,-3) {};
\node[below,gray] at (xik1) {$\xi_{k_1}$};

\coordinate (a) at (-3,-3);
\coordinate (b) at (4,4);

\draw[dashed] (w0) to[bend right=90] node[pos=0.3,above] (k0) {$k_0$} (step1) ;
\draw[dashed] (step1) to[bend right=90] (step2) ;
\draw[dashed] (step2) to[bend right=90] (w1) ;
\draw[dashed] (w1) to node[pos=0.7,above] (k1) {$k_1$} (w2) ;

%\node[gray] at (w00)[above] {$w_{\infty}$};
%\node[gray] at (wn)[above] {$w_n$};
%\node[gray] at (wn1)[above] {$w_{n-1}$};
\node[gray] at (w2)[above] {$w_2$};
\node[gray] at (w1)[above] {$w_1$};
\node[gray] at (w0) [left=5pt] {$w_0$};
\foreach \n in {w1,w2}
  \node at (\n)[gray,circle,fill,inner sep=2pt]{};
\node at (w0)[gray,circle,fill,inner sep=2pt]{};
%
%\draw[blue] (xi1)--(xir)|-(b);
%\node at (-4.3,1.5) {$\ldots$};
%\node[above] at (-4.3,1.5) {$k_0$};
%\draw[blue] (-3.9,-3)--(-3.9,-3)|-(b);
%
%\draw[blue] (-3.8,-3)--(-1.1,0)--(-1.1,0)|-(b);
%\node at (-0.7,1.5) {$\ldots$};
%\node[above] at (-0.7,1.5) {$k_1$};
%\draw[blue] (-3,-3)--(-0.3,0)--(-0.3,0)|-(b);
%
%\draw[blue] (-2.5,-3)--(2.8,0)--(2.8,0)|-(b);
%\node at (3.2,1.5) {$\ldots$};
%\node[above] at (3.2,1.5) {$k_{n-1}$};
%\draw[blue] (xi1)--(3.6,0)--(3.6,0)|-(b);

%\node[gray,circle,fill,inner sep=0.8pt] (xij1) at (-3.8,-3) {};
%\node[gray,circle,fill,inner sep=0.8pt] (xij2) at (-2.5,-3) {};
%\node[gray,circle,fill,inner sep=0.8pt] (xij3) at (-3,-3) {};
%\node[below,gray] at (xij1) {$\xi_j$};
%\node[below,gray] at (xij2) {$\xi'_j$};

%\draw[dashed] (w0) to node[midway,above] (k0) {\tiny $k_0$} (w1);
%\draw[dashed] (w1) to node[midway,above] (k1) {\tiny $k_1$} (w2);
%\draw[dashed] (wn1) to node[pos=0.65,above] (kn1) {\tiny $k_{n-1}$} (wn);

\coordinate (c) at (-2.5,-2.5);

\draw[double,red] (k0) -- (k0|-c);
\draw[red] (k0|-c) -- (xir);
\draw[red] (k0|-c) -- (xirk0);
%\draw[red] (k0|-c) -- (-3.9,-3);

\draw[double,red] (k1)  to [bend left=90] (rouge1);
\draw[double,red] (rouge1)  to [bend left=90] (rouge2);
\draw[double,red] (rouge2)  to [bend left=90] (rouge3);
\draw[double,red] (rouge3)  to [bend left=90] (rouge4);
\draw[double,red] (rouge4)  to [bend left=35] (k1|-c);

%--
\draw[red] (k1|-c) -- (xi1);
\draw[red] (k1|-c) -- (xik1);
%\draw[red] (-2.2,-1.8) -- (-3,-3);

%\draw[double,red] (3,0)--(0,-1.8);
%\draw[red] (0,-1.8) -- (xij2);
%\draw[red] (0,-1.8) -- (xi1);

%\draw[gray] (-5,4) -- (-5,1);
\draw[gray] (-5,4) -- (-5,-3);
\draw[gray] (-5,-3) -- (5,-3);% node[right] {$\partial D_n$};
\draw[gray] (5,4) -- (5,-3);
\draw[gray] (5,4) -- (-5,4);
\end{tikzpicture} }}
\end{equation*}

From \cite[Remark~4.1]{Jules1}, one can find the precise rule for changing red handles from a diagram, that we use now. From a diagram, if we only modify the red-handle (leaving dashed arcs fixed), the modification is encoded by a braid. Then the diagrams correspond to the same class in $\Hrelm_r$ up to a coefficient. The coefficient showing up is the image of the corresponding braid by $\rho_r$. Knowing this, we have:
\begin{equation*}
\vcenter{\hbox{
\begin{tikzpicture}[scale=0.5,decoration={
    markings,
    mark=at position 0.5 with {\arrow{<}}}
    ] 
\node (w0) at (-5,1) {};
\node (w00) at (5,1) {};
\node (w1) at (2,1) {};
\node (w2) at (-2,1) {};
%\node[gray] at (1.3,0.0) {\ldots};
%\node (wn) at (4,1) {};
%\node (wn1) at (2,1) {};
\coordinate (step1) at (3.5,1);
\coordinate (step2) at (-3.5,1);
\coordinate (rouge1) at (-3,1);
\coordinate (rouge2) at (3,1);
\coordinate (rouge3) at (-4,1);
\coordinate (rouge4) at (4,1);

\node[gray,circle,fill,inner sep=0.8pt] (xir) at (-4.8,-3) {};
\node[below,gray] at (xir) {$\xi_r$};
%\node[below=5pt,gray] at (-3.2,-3) {$\ldots$};
\node[gray,circle,fill,inner sep=0.8pt] (xi1) at (-2.3,-3) {};
\node[below,gray] at (xi1) {$\xi_1$};
\node[gray,circle,fill,inner sep=0.8pt] (xirk0) at (-3.9,-3) {};
%\node[below,gray] at (xirk0) {$\xi_{r-k_0}$};
\node[gray,circle,fill,inner sep=0.8pt] (xik1) at (-3.6,-3) {};
\node[below,gray] at (xik1) {$\xi_{k_1}$};

\coordinate (a) at (-3,-3);
\coordinate (b) at (4,4);

\draw[dashed] (w0) to[bend right=90] node[pos=0.3,above] (k0) {$k_0$} (step1) ;
\draw[dashed] (step1) to[bend right=90] (step2) ;
\draw[dashed] (step2) to[bend right=90] (w1) ;
\draw[dashed] (w1) to node[pos=0.7,above] (k1) {$k_1$} (w2) ;

%\node[gray] at (w00)[above] {$w_{\infty}$};
%\node[gray] at (wn)[above] {$w_n$};
%\node[gray] at (wn1)[above] {$w_{n-1}$};
\node[gray] at (w2)[above] {$w_2$};
\node[gray] at (w1)[above] {$w_1$};
\node[gray] at (w0) [left=5pt] {$w_0$};
\foreach \n in {w1,w2}
  \node at (\n)[gray,circle,fill,inner sep=2pt]{};
\node at (w0)[gray,circle,fill,inner sep=2pt]{};
%
%\draw[blue] (xi1)--(xir)|-(b);
%\node at (-4.3,1.5) {$\ldots$};
%\node[above] at (-4.3,1.5) {$k_0$};
%\draw[blue] (-3.9,-3)--(-3.9,-3)|-(b);
%
%\draw[blue] (-3.8,-3)--(-1.1,0)--(-1.1,0)|-(b);
%\node at (-0.7,1.5) {$\ldots$};
%\node[above] at (-0.7,1.5) {$k_1$};
%\draw[blue] (-3,-3)--(-0.3,0)--(-0.3,0)|-(b);
%
%\draw[blue] (-2.5,-3)--(2.8,0)--(2.8,0)|-(b);
%\node at (3.2,1.5) {$\ldots$};
%\node[above] at (3.2,1.5) {$k_{n-1}$};
%\draw[blue] (xi1)--(3.6,0)--(3.6,0)|-(b);

%\node[gray,circle,fill,inner sep=0.8pt] (xij1) at (-3.8,-3) {};
%\node[gray,circle,fill,inner sep=0.8pt] (xij2) at (-2.5,-3) {};
%\node[gray,circle,fill,inner sep=0.8pt] (xij3) at (-3,-3) {};
%\node[below,gray] at (xij1) {$\xi_j$};
%\node[below,gray] at (xij2) {$\xi'_j$};

%\draw[dashed] (w0) to node[midway,above] (k0) {\tiny $k_0$} (w1);
%\draw[dashed] (w1) to node[midway,above] (k1) {\tiny $k_1$} (w2);
%\draw[dashed] (wn1) to node[pos=0.65,above] (kn1) {\tiny $k_{n-1}$} (wn);

\coordinate (c) at (-2.5,-2.5);

\draw[double,red] (k0) -- (k0|-c);
\draw[red] (k0|-c) -- (xir);
\draw[red] (k0|-c) -- (xirk0);
%\draw[red] (k0|-c) -- (-3.9,-3);

\draw[double,red] (k1)  to [bend left=90] (rouge1);
\draw[double,red] (rouge1)  to [bend left=90] (rouge2);
\draw[double,red] (rouge2)  to [bend left=90] (rouge3);
\draw[double,red] (rouge3)  to [bend left=90] (rouge4);
\draw[double,red] (rouge4)  to [bend left=35] (k1|-c);

%--
\draw[red] (k1|-c) -- (xi1);
\draw[red] (k1|-c) -- (xik1);
%\draw[red] (-2.2,-1.8) -- (-3,-3);

%\draw[double,red] (3,0)--(0,-1.8);
%\draw[red] (0,-1.8) -- (xij2);
%\draw[red] (0,-1.8) -- (xi1);

%\draw[gray] (-5,4) -- (-5,1);
\draw[gray] (-5,4) -- (-5,-3);
\draw[gray] (-5,-3) -- (5,-3);% node[right] {$\partial D_n$};
\draw[gray] (5,4) -- (5,-3);
\draw[gray] (5,4) -- (-5,4);
\end{tikzpicture} }} = (-1)^{k_1} (q^{2l})^{3} (-t)^{-3\frac{k_1(k_1-1)}{2}} \left(
\vcenter{\hbox{
\begin{tikzpicture}[scale=0.5,decoration={
    markings,
    mark=at position 0.5 with {\arrow{<}}}
    ] 
\node (w0) at (-5,1) {};
\node (w00) at (5,1) {};
\node (w1) at (2,1) {};
\node (w2) at (-2,1) {};
%\node[gray] at (1.3,0.0) {\ldots};
%\node (wn) at (4,1) {};
%\node (wn1) at (2,1) {};
\coordinate (step1) at (3.5,1);
\coordinate (step2) at (-3.5,1);
\coordinate (rouge1) at (-3,1);
\coordinate (rouge2) at (3,1);
\coordinate (rouge3) at (-4,1);
\coordinate (rouge4) at (4,1);

\node[gray,circle,fill,inner sep=0.8pt] (xir) at (-4.8,-3) {};
\node[below,gray] at (xir) {$\xi_r$};
%\node[below=5pt,gray] at (-3.2,-3) {$\ldots$};
\node[gray,circle,fill,inner sep=0.8pt] (xi1) at (-2.3,-3) {};
\node[below,gray] at (xi1) {$\xi_1$};
\node[gray,circle,fill,inner sep=0.8pt] (xirk0) at (-3.9,-3) {};
%\node[below,gray] at (xirk0) {$\xi_{r-k_0}$};
\node[gray,circle,fill,inner sep=0.8pt] (xik1) at (-3.6,-3) {};
\node[below,gray] at (xik1) {$\xi_{k_1}$};

\coordinate (a) at (-3,-3);
\coordinate (b) at (4,4);

\draw[dashed] (w0) to[bend right=90] node[pos=0.3,above] (k0) {$k_0$} (step1) ;
\draw[dashed] (step1) to[bend right=90] (step2) ;
\draw[dashed] (step2) to[bend right=90] (w1) ;
\draw[dashed] (w1) to node[pos=0.7,above] (k1) {$k_1$} (w2) ;

%\node[gray] at (w00)[above] {$w_{\infty}$};
%\node[gray] at (wn)[above] {$w_n$};
%\node[gray] at (wn1)[above] {$w_{n-1}$};
\node[gray] at (w2)[above] {$w_2$};
\node[gray] at (w1)[above] {$w_1$};
\node[gray] at (w0) [left=5pt] {$w_0$};
\foreach \n in {w1,w2}
  \node at (\n)[gray,circle,fill,inner sep=2pt]{};
\node at (w0)[gray,circle,fill,inner sep=2pt]{};
%
%\draw[blue] (xi1)--(xir)|-(b);
%\node at (-4.3,1.5) {$\ldots$};
%\node[above] at (-4.3,1.5) {$k_0$};
%\draw[blue] (-3.9,-3)--(-3.9,-3)|-(b);
%
%\draw[blue] (-3.8,-3)--(-1.1,0)--(-1.1,0)|-(b);
%\node at (-0.7,1.5) {$\ldots$};
%\node[above] at (-0.7,1.5) {$k_1$};
%\draw[blue] (-3,-3)--(-0.3,0)--(-0.3,0)|-(b);
%
%\draw[blue] (-2.5,-3)--(2.8,0)--(2.8,0)|-(b);
%\node at (3.2,1.5) {$\ldots$};
%\node[above] at (3.2,1.5) {$k_{n-1}$};
%\draw[blue] (xi1)--(3.6,0)--(3.6,0)|-(b);

%\node[gray,circle,fill,inner sep=0.8pt] (xij1) at (-3.8,-3) {};
%\node[gray,circle,fill,inner sep=0.8pt] (xij2) at (-2.5,-3) {};
%\node[gray,circle,fill,inner sep=0.8pt] (xij3) at (-3,-3) {};
%\node[below,gray] at (xij1) {$\xi_j$};
%\node[below,gray] at (xij2) {$\xi'_j$};

%\draw[dashed] (w0) to node[midway,above] (k0) {\tiny $k_0$} (w1);
%\draw[dashed] (w1) to node[midway,above] (k1) {\tiny $k_1$} (w2);
%\draw[dashed] (wn1) to node[pos=0.65,above] (kn1) {\tiny $k_{n-1}$} (wn);

\coordinate (c) at (-2.5,-2.5);

\draw[double,red] (k0) -- (k0|-c);
\draw[red] (k0|-c) -- (xir);
\draw[red] (k0|-c) -- (xirk0);
%\draw[red] (k0|-c) -- (-3.9,-3);

\draw[double,red] (k1)  -- (k1|-c);
%\draw[double,red] (rouge1)  to [bend left=90] (rouge2);
%\draw[double,red] (rouge2)  to [bend left=90] (rouge3);
%\draw[double,red] (rouge3)  to [bend left=90] (rouge4);
%\draw[double,red] (rouge4)  to [bend left=35] (k1|-c);

%--
\draw[red] (k1|-c) -- (xi1);
\draw[red] (k1|-c) -- (xik1);
%\draw[red] (-2.2,-1.8) -- (-3,-3);

%\draw[double,red] (3,0)--(0,-1.8);
%\draw[red] (0,-1.8) -- (xij2);
%\draw[red] (0,-1.8) -- (xi1);

%\draw[gray] (-5,4) -- (-5,1);
\draw[gray] (-5,4) -- (-5,-3);
\draw[gray] (-5,-3) -- (5,-3);% node[right] {$\partial D_n$};
\draw[gray] (5,4) -- (5,-3);
\draw[gray] (5,4) -- (-5,4);
\end{tikzpicture} }} \right). 
\end{equation*}
See \cite[Example~4.12]{Jules2} for an inspiring example, from which the above equality can be seen as a three time iteration. The factor $(q^{2l})^{-3 k_1}$ comes from the fact that the red handle has winding number $-3$ around punctures, the $t^{-3\frac{k_1(k_1-1)}{2}}$ is due to the framing of the red handle (the $k_1$ points in the handle are given $3$ full twists while winding around punctures), the $(-1)^{-3\frac{k_1(k_1-1)}{2}}$ appears because of latter full twists permuting coordinates in the parametrization of the simplex, and finally $(-1)^{k_1}$ comes from the fact that the orientation of the interval over which configurations of the simplex are supported is reversed. In \cite{Jules1} (Remark~4.1 for example) both terms coming from the number of full twists given by the red handle are reunited under the same variable denoted $\mt:=-t$. We denote $U'(k_0,k_1)$ the diagram involved in the right term of the above equality which naturally corresponds to a class in $\Hrelm_r$. Now we recall that to compute the Jones polynomials of the trefoil knot out of the formula from Theorem \ref{pairingformulaforJones}, we must compute the pairings:
\begin{align*}
\langle \sigma_1^3 \left( U(k_0,k_1) \right) , B(k_0,k_1) \rangle_{\Laurent} & = (-1)^{k_1} (q^{2l})^{-3 k_1} (-1)^{-3\frac{k_1(k_1-1)}{2}}  t^{-3\frac{k_1(k_1-1)}{2}} \langle U'(k_0,k_1) , B(k_0,k_1) \rangle_{\Laurent} \\
%& = (-1)^{k_1} (q^{2l})^{-3 k_1} (-1)^{-3\frac{k_1(k_1-1)}{2}}  t^{-3\frac{k_1(k_1-1)}{2}} \CP_{k_0,k_1},
\end{align*}
where the diagram illustrating the computation of $\langle U'(k_0,k_1) , B(k_0,k_1) \rangle_{\Laurent}$ is the following one:
\begin{equation}\label{intersectiontrivial}
\vcenter{\hbox{
\begin{tikzpicture}[scale=0.5,decoration={
    markings,
    mark=at position 0.5 with {\arrow{<}}}
    ] 
\node (w0) at (-5,1) {};
\node (w00) at (5,1) {};
\node (w1) at (2,1) {};
\node (w2) at (-2,1) {};
%\node[gray] at (1.3,0.0) {\ldots};
%\node (wn) at (4,1) {};
%\node (wn1) at (2,1) {};
\coordinate (step1) at (3.5,1);
\coordinate (step2) at (-3.5,1);
\coordinate (rouge1) at (-3,1);
\coordinate (rouge2) at (3,1);
\coordinate (rouge3) at (-4,1);
\coordinate (rouge4) at (4,1);

\node[gray,circle,fill,inner sep=0.8pt] (xir) at (-4.8,-3) {};
\node[below,gray] at (xir) {$\xi_r$};
%\node[below=5pt,gray] at (-3.2,-3) {$\ldots$};
\node[gray,circle,fill,inner sep=0.8pt] (xi1) at (-2.3,-3) {};
\node[below,gray] at (xi1) {$\xi_1$};
\node[gray,circle,fill,inner sep=0.8pt] (xirk0) at (-3.7,-3) {};
%\node[below,gray] at (xirk0) {$\xi_{r-k_0}$};
\node[gray,circle,fill,inner sep=0.8pt] (xik1) at (-3.3,-3) {};
\node[below,gray] at (xik1) {$\xi_{k_1}$};

\coordinate (a) at (-3,-3);
\coordinate (b) at (4,4);

\draw[dashed] (w0) to[bend right=90] node[pos=0.35,above] (k0) {$k_0$} (step1) ;
\draw[dashed] (step1) to[bend right=90] (step2) ;
\draw[dashed] (step2) to[bend right=90] (w1) ;
\draw[dashed] (w1) to node[pos=0.5,above] (k1) {$k_1$} (w2) ;

%\node[gray] at (w00)[above] {$w_{\infty}$};
%\node[gray] at (wn)[above] {$w_n$};
%\node[gray] at (wn1)[above] {$w_{n-1}$};
\node[gray] at (w2)[above] {$w_2$};
\node[gray] at (w1)[above] {$w_1$};
\node[gray] at (w0) [left=5pt] {$w_0$};
\foreach \n in {w1,w2}
  \node at (\n)[gray,circle,fill,inner sep=2pt]{};
\node at (w0)[gray,circle,fill,inner sep=2pt]{};
\draw[blue] (xir)--(xir|-b);
\draw[blue] (xirk0)--(xirk0|-b);
\node[blue] at (-4.3,1.5) {$\ldots$};
\node[blue,above] at (-4.3,1.5) {$k_0$};
%\draw[blue] (-3.9,-3)--(-3.9,-3)|-(b);
%
\draw[blue] (xik1)--(-1.3,0|-c)--(-1.3,0|-b);
\draw[blue] (xi1)--(1.3,0|-c)--(1.3,0|-b);
\node[blue] at (0,3.5) {$\ldots$};
\node[above,blue] at (0,3.5) {$k_1$};
%\draw[blue] (-3,-3)--(-0.3,0)--(-0.3,0)|-(b);
%
%\draw[blue] (-2.5,-3)--(2.8,0)--(2.8,0)|-(b);
%\node at (3.2,1.5) {$\ldots$};
%\node[above] at (3.2,1.5) {$k_{n-1}$};
%\draw[blue] (xi1)--(3.6,0)--(3.6,0)|-(b);

%\node[gray,circle,fill,inner sep=0.8pt] (xij1) at (-3.8,-3) {};
%\node[gray,circle,fill,inner sep=0.8pt] (xij2) at (-2.5,-3) {};
%\node[gray,circle,fill,inner sep=0.8pt] (xij3) at (-3,-3) {};
%\node[below,gray] at (xij1) {$\xi_j$};
%\node[below,gray] at (xij2) {$\xi'_j$};

%\draw[dashed] (w0) to node[midway,above] (k0) {\tiny $k_0$} (w1);
%\draw[dashed] (w1) to node[midway,above] (k1) {\tiny $k_1$} (w2);
%\draw[dashed] (wn1) to node[pos=0.65,above] (kn1) {\tiny $k_{n-1}$} (wn);

\coordinate (c) at (-2.5,-2.5);

\draw[double,red] (k0) -- (k0|-c);
\draw[red] (k0|-c) -- (xir);
\draw[red] (k0|-c) -- (xirk0);
%\draw[red] (k0|-c) -- (-3.9,-3);

\draw[double,red] (k1)  -- (k1|-c);
%\draw[double,red] (rouge1)  to [bend left=90] (rouge2);
%\draw[double,red] (rouge2)  to [bend left=90] (rouge3);
%\draw[double,red] (rouge3)  to [bend left=90] (rouge4);
%\draw[double,red] (rouge4)  to [bend left=35] (k1|-c);

%--
\draw[red] (k1|-c) -- (xi1);
\draw[red] (k1|-c) -- (xik1);
%\draw[red] (-2.2,-1.8) -- (-3,-3);

%\draw[double,red] (3,0)--(0,-1.8);
%\draw[red] (0,-1.8) -- (xij2);
%\draw[red] (0,-1.8) -- (xi1);

%\draw[gray] (-5,4) -- (-5,1);
\draw[gray] (-5,4) -- (-5,-3);
\draw[gray] (-5,-3) -- (5,-3);% node[right] {$\partial D_n$};
\draw[gray] (5,4) -- (5,-3);
\draw[gray] (5,4) -- (-5,4);
\end{tikzpicture} }} %\right). 
\end{equation}
The picture is now endowed with blue arcs corresponding to the barcode, consisting of $k_0$ blue arcs passing between $w_0$ and $w_1$ and $k_1$ arcs between $w_1$ and $w_2$. To find an intersection point between classes corresponding to $U'(k_0,k_1)$ and $B(k_0,k_1)$ we must consider a configuration with one and only one point on each blue arc. Thus there is only one intersection configuration denoted ${\pmb p} = (p_1 , \ldots, p_r)$ for which $p_1 , \ldots , p_{k_0}$ are the intersection point between the first $k_0$ blue arc and the dashed arc indexed by $k_0$. It is unique as we must choose $k_1$ points on the indexed $k_1$ dashed arcs for $(p_{k_0}, \ldots ,p_{k_r})$, with only one choice for intersecting blue arcs. 

One can check that the loop of $X_r$ starting at $\lbrace \xi_1 , \ldots , \xi_r \rbrace$, going to $U'(k_0 , k_1)$ following red handles, then to ${\pmb p}$ along $U'(k_0,k_1)$, finally back to the base point following blue arcs from $B(k_0,k_1)$ is trivial. So that for all $(k_0,k_1) \in \BN^2$, we have:
\[
\langle U'(k_0,k_1) , B(k_0,k_1) \rangle_{\Laurent} = 1.
\]
Finally, we have:
\[
\langle \sigma_1^3 \left( U(k_0,k_1) \right) , B(k_0,k_1) \rangle_{\Laurent} = (-1)^{k_1} (q^{2l})^{-3 k_1}  (-1)^{-3\frac{k_1(k_1-1)}{2}} t^{-3\frac{k_1(k_1-1)}{2}} . 
\]
For the sake of computing the colored Jones polynomials, we are interested in $t=-q^{-2}$ (see \cite[Theorem~1]{Jules1}). Then, if $K$ designates the trefoil knot and $l \in \BN$:
\begin{align*}
\Jones_K(l+1)  & =   q^{-3-2l}\left( 1+ \sum_{r=1}^{2l} \sum_{\begin{array}{c} (k_0,\ldots,k_{n-1})/ \\ \sum k_i=r \end{array}} \left\langle \sigma_1^3 \left( U(k_0,\ldots, k_{n-1}) \right) , B(k_0,\ldots, k_{n-1}) \rangle_{\aug^l}\right. q^{2r} \right) \\
&  =   q^{-3-2l}\left( 1 + \sum_{r=0}^{2l} \sum_{\begin{array}{c} (k_0,\ldots,k_{n-1})/ \\ \sum k_i=r \end{array}} (-1)^{k_1} (q^{2l})^{-3 k_1}  (q^{-2})^{-3\frac{k_1(k_1-1)}{2}} q^{2r} \right) .
\end{align*}
The usual Jones polynomial corresponds to the case $l=2$. In that case one has:
\[
\Jones_K(2)= q^{-3}\left( q^{-2} + (1-q^{-6})+q^{2} \right) =  q^{-1} + q^{-3} + q^{-5} - q^{-9}. 
\]

\begin{rmk}[Conventions]
The Jones polynomial of the trefoil computed above is the one computed in \cite[Figure~4]{BN} (see the target of the arrow in the example) up to $q\mapsto q^{-1}$ (we compute here the mirror image, it depends on the choice for the braiding of $\Uq$). One sees:
\[
q^{-1} + q^{-3} + q^{-5} - q^{-9} = (q+q^{-1}) (q^{-2} +q^{-6}-q^{-8}) .
\]
In the second writing of the polynomial, one notices that the Jones polynomial factorizes by $(q+q^{-1})$ corresponding to the Jones polynomial of the unknot. The other factor is the {\em normalized} Jones polynomial. 
\end{rmk}

\newpage
\thispagestyle{empty}

\end{document}